\newtheorem{theorem}{Theorem}
\newtheorem{proposition}{Proposition}
\newtheorem{lemma}[proposition]{Lemma}
\newtheorem{definition}[proposition]{Definition}
\newtheorem{alemma}{Lemma}
\newcommand{\Norm}{\mathfrak{N}}
\newcommand{\Normk}{\mathfrak{N}_\ks}
\newcommand{\NormK}{\mathfrak{N}_\KL}
\renewcommand{\O}{\mathcal{O}}
\newcommand{\il}[1]{\mathfrak{#1}}
\newcommand{\down}[1]{{}^\mathfrak{d}#1}
\newcommand{\up}[1]{{}^\mathfrak{u}#1}
\newcommand{\Hen}{H_{\mathcal{N}}}
\newcommand{\Qbar}{\overline{\Q}}
\newcommand{\NthX}{N(\theta \ks,X)}
\newcommand{\balf}{{\mbox{\boldmath $\alpha$}}}
\newcommand{\bbalf}{{\mbox{\boldmath $\omega$}}}
\newcommand{\balpha}{\omega}
\newcommand{\s}{\psi}
\newcommand{\iN}{i_\en}
\newcommand{\iNprime}{i_{\en'}}
\newcommand{\vdelta}{{\mbox{\boldmath $\delta$}}}
\newcommand{\normquot}{q}
\newcommand{\gP}{g_P}
\newcommand{\sectwor}{u}
\newcommand\wk{w_\ks}
\newcommand\M{M}
\newcommand\IR{\mathbb R}
\newcommand\IZ{\mathbb Z}
\newcommand\IP{\mathbb P}
\newcommand\IQ{\mathbb Q}
\newcommand{\IS}{\mathbb{S}}
\newcommand{\R}{\mathbb{R}}
\newcommand{\Z}{\mathbb{Z}}
\newcommand{\Q}{\mathbb{Q}}
\newcommand{\IC}{\mathbb{C}}
\renewcommand{\P}{\mathbb{P}}
\newcommand\ks{k}
\newcommand\KL{K}
\newcommand\en{\mathcal{N}}
\newcommand\hen{H_\mathcal{N}}
\newcommand\vnull{{\mbox{\boldmath $0$}}}
\newcommand\A{{\mathfrak{A}}}
\newcommand\D{{\mathfrak{D}}}
\newcommand\B{{\mathfrak{B}}}
\newcommand\F{{\mathfrak{F}}}
\newcommand\vx{{\bf x}}
\newcommand\vy{{\bf y}}
\newcommand\vz{{\bf z}}
\newcommand\vt{{\bf t}}
\newcommand\vNull{{\bf 0}}
\newcommand\Da{m}
\newcommand\TE{t}
\newcommand\Error{\mathcal{E}}
\newcommand\ga{\gamma}
\newcommand\lineqcountingfunction{N_L}
\newcommand{\nb}{{\Gamma}}
\DeclareMathOperator{\Vol}{Vol}
\DeclareMathOperator{\Lip}{Lip}
\DeclareMathOperator{\lcm}{lcm}
\DeclareMathOperator{\Cl}{Cl}
\DeclareMathOperator{\Li}{Li}
\title[Schanuel's theorem for heights defined via extension fields]{Schanuel's theorem for heights defined via extension fields}
\author{Christopher Frei}
\address{Institut für Algebra, Zahlentheorie
  und Diskrete Mathematik, Leibniz Universit\"at Hannover,
  Welfengarten 1, 30167 Hannover, Germany} \email{frei@math.tugraz.at}
\thanks{The first author was supported in part by the FWF project
  \#S9611-N23 and by a Humboldt Research Fellowship for Postdoctoral
  Researchers.}
\author{Martin Widmer}
\address{Department of Mathematics,
Royal Holloway University of London,
Egham TW20 0EX, UK}
\email{martin.widmer@rhul.ac.uk}
\thanks{The second author was supported in part by the FWF grant \#M1222-N13.}
\subjclass[2010]{Primary 11R04, 11G50; Secondary 11D45}
\date{\today}
\dedicatory{}
\keywords{Algebraic number theory, heights, counting}
\begin{document}
\numberwithin{equation}{section}
\numberwithin{proposition}{section}

\begin{abstract}
Let $\ks$ be a number field, let $\theta$ be a nonzero algebraic number, and let $H(\cdot)$ be the Weil height on the algebraic numbers. 
In response to a question by T. Loher and D. W. Masser, we prove an 
asymptotic formula for the number of $\alpha \in \ks$ with $H(\alpha \theta)\leq X$, and 
we analyze the leading constant in our asymptotic formula. In particular, we prove a sharp
upper bound in terms of the classical Schanuel constant.

We also prove an asymptotic counting result for a new class of height functions defined via extension fields of $\ks$ with a fairly 
explicit error term. This provides a conceptual framework for Loher and Masser's problem and generalizations thereof.

Finally, we establish asymptotic counting results for varying $\theta$, namely, for the number of $\sqrt{p}\alpha$ of bounded height, 
where  $\alpha \in \ks$ and $p$ is any rational prime inert in $\ks$.  
\end{abstract}

\maketitle
\tableofcontents

\section{Introduction}
Let $\ks$ be a number field. A well-known result due to Schanuel \cite{25} shows that
the subset of $\ks^n$ of points with absolute multiplicative Weil height no larger than $X$ has cardinality
\begin{alignat*}1
S_\ks(n)X^{d(n+1)}+O(X^{d(n+1)-1}\log X),
\end{alignat*}
as $X$ tends to infinity. Here $d$ is the degree of $\ks$ and the positive constant $S_\ks(n)$ involves all the classical number field invariants; for the definition
see (\ref{Schanuelconstant}).

In the present article we generalize this result in various respects motivated by a question of Loher and Masser. Let $\theta$ be a nonzero algebraic number,
let $H(\cdot)$ denote the absolute multiplicative Weil height on the algebraic numbers $\Qbar$, and write  $\NthX$ 
for the number of $\alpha\in \ks$ with $H(\theta\alpha)\leq X$.

Evertse was the first one to consider the quantity $\NthX$.
The proof of his celebrated uniform upper bounds \cite{Evertse} for the number of solutions of $S$-unit equations over $\ks$ involves the following uniform upper bound 
\begin{alignat*}1
\NthX\leq 5\cdot 2^d X^{3d}+1.
\end{alignat*}
Later Schmidt \cite[Lemma 8B, p.~29]{33} refined Evertse's argument to get the correct exponent on $X$. Schmidt used a different height but elementary 
inequalities between them imply 
\begin{alignat*}1
\NthX\leq 36\cdot 2^{3d}X^{2d}.
\end{alignat*}
But the constant is fairly large. Indeed,
the constant's exponential dependence on $d$ can be removed, as shown by  
Loher and Masser. More precisely, they proved  
\begin{alignat}1\label{LoherMasser}
\NthX\leq 68(d\log d)X^{2d}, 
\end{alignat}
provided $d>1$, and $N(\theta\Q,X)\leq 17 X^2$. (In the special case
$\theta\in \ks$ a similar result was obtained earlier by Loher in his
Ph.D. thesis \cite{30}.) By counting roots of unity they also showed
that an upper bound with a constant of the form $o(d\log \log d)$
cannot hold, and hence regarding the degree their result is nearly
optimal.  Loher and Masser's result (\ref{LoherMasser}) played also an
important role in the recent proof of a longstanding conjecture of
Erd\H{o}s on the largest prime divisor of $2^n-1$ by Stewart
\cite{Stewart2013}. Stewart's strategy builds up on work of Yu
\cite{Yu2007}, \cite{Yu2013} on $p$-adic logarithm forms in which Yu
applies a consequence of (\ref{LoherMasser}) to obtain a significant
improvement. It is this improvement that makes Stewart's approach work
(cf. \cite[p.~378]{Yu2013}).

All the proofs of these upper bounds for $\NthX$ rely in an essential way on the box-principle, which works well for upper
bounds but seems inappropriate to produce asymptotic results. This may have motivated Loher and Masser's following statement \cite[p.~279]{10} regarding their bound 
on $\NthX$:``{\it It would be interesting to know if there are asymptotic formulae like Schanuel's for the cardinalities here, at least for fixed $\theta$ not in $\ks$.}''

Our Theorem \ref{Thmtheta1} responds to this problem for fixed $\theta$ not in $\ks$, and our Theorem \ref{Thmthetan} generalizes Theorem \ref{Thmtheta1} to arbitrary dimensions. 
Theorem \ref{gisatmostone} gives a sharp upper bound for the leading constant in 
these asymptotics in terms of Schanuel's constant $S_\ks(n)$. In Theorem \ref{Thmvartheta}, we shall see asymptotic results for varying $\theta$ not in $\ks$.

To provide a more general framework for Loher and Masser's, and similar questions, we introduce a new class of heights on $\IP^n(\ks)$, 
using finite extensions of the base field $\ks$. As usual, these heights decompose into local factors, one for each place $v$ of $\ks$.   
However, at a finite number of non-Archimedean places, the local factors of these heights do not necessarily arise from norms, 
and moreover, their values do not necessarily 
lie in the value groups of the corresponding places $v$. 
Theorem \ref{generalthm} (in Section \ref{section6}), from which we will deduce Theorem \ref{Thmthetan} (and thus also Theorem \ref{Thmtheta1}), 
is a counting result, in the style of Schanuel's, for 
these heights. 

Our heights are special cases of the heights used by Peyre \cite[D\'efinition 1.2]{Peyre95}. 
Peyre gives asymptotic counting results \cite[Corollaire 6.2.18]{Peyre95}
but no error estimates for his general heights. Therefore the main terms in our Theorem \ref{Thmthetan} and 
Theorem \ref{generalthm} 
could likely be derived from Peyre's result, although with a different representation of the constant. Indeed, a significant part of this
work consists of finding the right representation which enables us to prove the sharp upper bound in Theorem \ref{gisatmostone}, as well as some invariance properties. 
Furthermore, Peyre's approach does not seem to provide comparable error terms, and the latter are essential for the proof of our Theorem
\ref{Thmvartheta}. A very recent result due to Ange \cite[Th\'eor\`eme 1.1]{Ange}  provides a Schanuel type counting result for another special case
of Peyre's heights. Ange also gives a completely explicit and fairly sharp error term. However, his heights require Euclidean/Hermitian norms at the Archimedean places and thus do not include the usual Weil height. 

Next we introduce some notation.
We start with Schanuel's constant $S_\ks(n)$, which is defined as follows
\begin{alignat}3
\label{Schanuelconstant}
S_\ks(n)=\frac{h_\ks R_\ks}{\wk\zeta_\ks(n+1)}
\left(\frac{2^{r}(2\pi)^{s}}{\sqrt{|\Delta_\ks|}}\right)^{n+1}
(n+1)^{r+s-1}.
\end{alignat}
Here $h_\ks$ is the class number, $R_\ks$ the regulator,
$\wk$ the number of roots of unity in $\ks$, $\zeta_\ks$
the Dedekind zeta-function of $\ks$, $\Delta_\ks$ the discriminant,
$r=r_\ks$ is the number of real embeddings of $\ks$ and $s=s_\ks$ is
the number of pairs of complex conjugate embeddings of $\ks$.

For each place $v$ of $\ks$ (or $w$ of $\KL := \ks(\theta)$) we choose the unique absolute value $|\cdot|_v$ on $\ks$ (or $|\cdot|_w$ on $\KL$) that extends either the usual
Euclidean absolute value on $\Q$ or a usual $p$-adic absolute value. 
We also fix a completion $\ks_v$ of $\ks$ at $v$ and for each Archimedean place $v$ of $\ks$ we define a set of points  $(z_0,\ldots,z_n)\in \ks_v^{n+1}$ by 
\begin{alignat*}3
\prod_{w\mid v}\max\{|z_0|_v,|\theta|_w|z_1|_v,\ldots,|\theta|_w|z_n|_v\}^{\frac{[\KL_w:\ks_v]}{[\KL:\ks]}}<1,
\end{alignat*}
where the product runs over all places $w$ of $\KL=\ks(\theta)$ extending $v$. 
As these sets are open, bounded, and not empty, they are measurable and have a finite, positive volume, which we denote by $V_v$. Here we identify $\ks_v$ with $\R$ or with $\IC$, 
and we identify the latter with $\R^2$. We define
\begin{alignat}3\label{defV}
V=V(\theta,\ks,n):=(2^{r}\pi^{s})^{-(n+1)}\prod_{v\mid \infty} V_v.
\end{alignat}

Write $\O_\ks$ for the ring of integers of $\ks$ and let $\mu_\ks$ be the M\"obius function for nonzero ideals of $\O_\ks$. For ideals $A$, $B$ of $\O_\ks$, we write $(A, B) := A + B$. 
Moreover, 
$\Normk A$ denotes the absolute norm of the fractional ideal $A$ of $\ks$. For $\alpha \in \ks$, we also write $\Normk(\alpha) := \Normk(\alpha \O_k)$. Analogous notation 
is used for $\KL$ instead of $\ks$.

For an ideal $B$ of $\O_\ks$, we write $\up B := B \O_\KL$ for the extension of $B$ to $\O_\KL$ (``up''). Similarly, for an ideal $\D$ of $\O_\KL$, we write $\down \D := \D \cap \O_\ks$ 
for the contraction of $\D$ to $\O_\ks$ (``down'').

The dependence on $\theta$ comes in two flavors; while $V$ amounts only to the Archimedean part  
the following constant  captures both parts.

Let $\alpha$ be nonzero and in $\O_\ks$ such that $\alpha\theta \in \O_\KL$, let $\D := \alpha\theta \O_\KL$, and $D := \down{\D}$. We define 
\begin{alignat}3
\label{thetaconstant}
g_\ks(\theta,n) := \frac{V}{\Normk(\alpha)^n}\sum_{B \mid D}\frac{\NormK(\D, \up B)^{\frac{n+1}{[\KL : \ks]}}}
{\Normk B}\sum_{A \mid B^{-1}D}\frac{\mu_\ks(A)}{\Normk A}\prod_{P \mid AB}\frac{\Normk P^{n+1} - \Normk P}{\Normk P^{n+1} - 1}.
\end{alignat}
In the product, $P$ runs over all prime ideals of $\O_\ks$ dividing
$AB$. It will follow from Lemma \ref{lemmaConstantInvariant} that this
definition does not depend on the choice of $\alpha$, and from
Proposition \ref{PropositionProductForm} that $g_\ks(\theta,n)>0$.

\begin{theorem}\label{Thmtheta1}
Let $\theta$ be a nonzero algebraic number, let $\ks$ be a number field and denote its degree by $d$. Then, as $X\geq 1$ tends to infinity, we have 
\begin{alignat*}1
\NthX=g_\ks(\theta,1)S_\ks(1)X^{2d}+O(X^{2d-1}\mathfrak{L}),
\end{alignat*}
where $\mathfrak{L}:=\log(X+1)$ if $d=1$ and $\mathfrak{L}:=1$ otherwise. The implicit constant in the $O$-term depends on $\theta$ and on $\ks$.
\end{theorem}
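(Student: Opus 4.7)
The plan is to derive Theorem \ref{Thmtheta1} as the case $n=1$ of the higher-dimensional Theorem \ref{Thmthetan}, which in turn I would deduce from the general counting result Theorem \ref{generalthm} applied to a height on $\IP^1(\ks)$ naturally attached to the extension $\KL = \ks(\theta)$.

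To set this up, I would attach to each place $v$ of $\ks$ the local factor
\begin{alignat*}1
H_v(x_0, x_1) := \prod_{w \mid v} \max\{|x_0|_v,\, |\theta|_w |x_1|_v\}^{[\KL_w : \ks_v]/[\KL : \ks]},
\end{alignat*}
where $w$ runs over places of $\KL$ above $v$. Unwinding the definition of the Weil height on $\KL$, using the tower $[\KL_w : \IQ_p] = [\KL_w : \ks_v]\,[\ks_v : \IQ_p]$ for $w \mid v \mid p$ together with $|\alpha|_w = |\alpha|_v$ for $\alpha \in \ks$, one checks that $\prod_v H_v(1, \alpha)^{[\ks_v : \IQ_p]/d}$ equals $H(\theta \alpha)$ for every $\alpha \in \ks$. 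Invariance under $(x_0, x_1) \mapsto (c x_0, c x_1)$ for $c \in \ks^\times$ follows from the product formula, so this product descends to a well-defined height $\hat H$ on $\IP^1(\ks)$. The map $\alpha \mapsto (1 : \alpha)$ then identifies $\NthX$ with the count of $P \in \IP^1(\ks)$ with $\hat H(P) \leq X$, up to the single point $(0 : 1)$, whose $\hat H$-value is $1$ by the product formula applied to $\theta \in \KL$ and is absorbed into the error.

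Next I would verify that the $H_v$ fit the framework of Section \ref{section6}. At each Archimedean place, the open unit ball of $H_v$ is exactly the set whose Euclidean volume $V_v$ enters (\ref{defV}). At non-Archimedean places, $H_v$ coincides with the standard maximum norm except at the finitely many primes of $\O_\ks$ dividing $\down{(\alpha \theta \O_\KL)}$, where the factors $|\theta|_w$ produce non-trivial local corrections. After the Möbius inversion inherent in Schanuel-style arguments, these local corrections reassemble into precisely the inner sum over $A \mid B^{-1}D$ and the product over $P \mid AB$ in the definition (\ref{thetaconstant}) of $g_\ks(\theta, n)$. Specializing Theorem \ref{generalthm} to $n = 1$ then yields the asymptotic $g_\ks(\theta, 1)\,S_\ks(1)\,X^{2d}$ with the claimed error; the dichotomy $\mathfrak{L} = \log(X+1)$ for $d = 1$ versus $\mathfrak{L} = 1$ otherwise reflects the familiar logarithmic loss in the $n=d=1$ case of Schanuel's original theorem, propagated through our decomposition.

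The main obstacle I anticipate lies not in any individual step but in the calibration required inside the proof of Theorem \ref{generalthm}: matching the leading constant produced intrinsically by the counting argument -- in terms of Archimedean unit-ball volumes and local factors at the bad primes -- with the closed-form expression $g_\ks(\theta, 1)\,S_\ks(1)$ given by (\ref{thetaconstant}), together with the independence of the auxiliary choice of $\alpha$ (cf.\ Lemma \ref{lemmaConstantInvariant}). Once this dictionary between the analytic and ideal-theoretic sides is in place, the passage Theorem \ref{generalthm} $\Rightarrow$ Theorem \ref{Thmthetan} $\Rightarrow$ Theorem \ref{Thmtheta1} is essentially formal.
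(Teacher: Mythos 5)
Your proposal follows essentially the same route as the paper: the paper also derives Theorem \ref{Thmtheta1} as the case $n=1$ of Theorem \ref{Thmthetan}, which in turn is deduced in Section \ref{section8} from Theorem \ref{generalthm} applied to the adelic Lipschitz system $N_w(\vz) = \max\{|z_0|_w, |\theta|_w|z_1|_w, \ldots, |\theta|_w|z_n|_w\}$ on $\KL = \ks(\theta)$, with the affine-to-projective passage handled exactly as you describe. The ``calibration'' step you correctly flag as the main work is carried out in the paper through Lemmata \ref{SC}--\ref{sec8_g1} and the subsequent lemmas, which compute $g_\ks^\en$ explicitly and reassemble it into the closed form \eqref{thetaconstant}.
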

Let us briefly discuss some properties of the constant $g_\ks(\theta,1)$ and then illustrate the theorem by some examples.

For any nonzero $\alpha$ in $\ks$ we have $\theta \ks=\alpha\theta \ks$. Also, the height is invariant under multiplication by a root of unity.
Therefore $\NthX=N(\zeta\alpha\theta \ks,X)$ for any $\alpha\in \ks^*$ and any root of unity $\zeta$, in particular we have 
\begin{alignat}1\label{invarianceproperty}
g_\ks(\theta,1)=g_\ks(\zeta\alpha\theta,1). 
\end{alignat}
This can also be proved directly from the definition as we shall see in Section \ref{section2}. 
By Schanuel's Theorem we conclude that $g_\ks(\zeta\alpha,1)=1$.
But, as is straightforward to check, the theorem implies even $g_\ks(\zeta\alpha,1)=1$ for $\zeta$ a root of any unit in $\O_\ks$ and $\alpha\in \ks^*$.

The fact that $H(\alpha\theta)=H(\alpha^{-1}\theta^{-1})$ implies 
\begin{alignat*}1
g_\ks(\theta,1)=g_\ks(\theta^{-1},1). 
\end{alignat*}
Next we consider the problem of uniformly bounding $g_\ks(\theta,1)$.
From Schanuel's theorem and the standard inequalities 
$H(\alpha)/H(\theta)\leq H(\theta\alpha)\leq H(\theta)H(\alpha)$ we conclude
\begin{alignat*}1
H(\theta)^{-2d}\leq g_\ks(\theta,1)\leq H(\theta)^{2d}.
\end{alignat*}
This raises the question of the existence of bounds that are uniform in $\theta$ or in $d$,
or even uniform in both quantities $\theta$ and $d$. From (\ref{LoherMasser}) we obtain an upper bound that is uniform in $\theta$, i.e., 
for $d>1$ 
\begin{alignat*}1
g_\ks(\theta,1)\leq \frac{68d\log d}{S_\ks(1)}.
\end{alignat*}
Now if we fix $d>1$ and vary the fields $\ks$ then by the Siegel-Brauer Theorem the right hand-side tends to infinity,
so this bound really depends on $\Delta_\ks$ and not only on $d$. 
However, intuitively one might guess that for most $\alpha\in \ks$ one has $H(\theta\alpha)\geq H(\alpha)$, 
so one might even expect that $g_\ks(\theta,1)\leq 1$ holds true, which, of course, would be best-possible.
We shall answer here all of these questions. We start with the upper bound and confirm the intuitive guess.
\begin{theorem}\label{gisatmostone}
Let $\theta$ be a nonzero algebraic number. Then $g_\ks(\theta, n) \leq 1$. Moreover, equality holds if and only if for every place $v$ of $\ks$ there is an $\alpha_v\in\ks_v$ such that $|\theta|_w = |\alpha_v|_v$ holds for all places $w$ of $\KL$ above $v$.
\end{theorem}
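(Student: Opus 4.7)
The plan is to factor $g_\ks(\theta,n)$ as a product of local contributions using Proposition~\ref{PropositionProductForm} and to bound each contribution by a Jensen-type inequality applied to the local data of $\theta$. Write $d_v$ for the local degree of $\ks$ at $v$ and $\lambda_w:=[\KL_w:\ks_v]/[\KL:\ks]$, so that $\sum_{w\mid v}\lambda_w=1$ for every place $v$ of $\ks$, and put $\bar c_v:=\prod_{w\mid v}|\theta|_w^{\lambda_w}$. Regrouped by places of $\ks$, the product formula on $\KL$ applied to $\theta$ reads $\prod_v\bar c_v^{d_v}=1$. It therefore suffices to show that $V\le\prod_{v\mid\infty}\bar c_v^{-d_v n}$ and that the non-Archimedean factor $\Normk(\alpha)^{-n}\sum_{B,A}(\cdots)$ in (\ref{thetaconstant}) is bounded by $\prod_P\bar c_{v_P}^{-d_{v_P}n}$; multiplying the two yields $g_\ks(\theta,n)\le 1$.

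For an Archimedean place $v$, pass to logarithmic coordinates $u_i:=\log|z_i|_v$. The defining inequality for $V_v$ becomes $\sum_{w\mid v}\lambda_w\max(u_0,u_1+\log|\theta|_w,\ldots,u_n+\log|\theta|_w)<0$. Since $x\mapsto\max(u_0,u_i+x)$ is convex, Jensen gives
\[
\sum_{w\mid v}\lambda_w\max(u_0,u_i+\log|\theta|_w)\ \geq\ \max(u_0,u_i+\log\bar c_v),
\]
so the integration domain is contained in the uniformised box $\{|z_0|_v<1,\ \bar c_v|z_i|_v<1\ (1\le i\le n)\}$, of volume $(2^{r_v}\pi^{s_v})^{n+1}\bar c_v^{-d_v n}$. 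Taking the product over $v\mid\infty$ gives the Archimedean bound on $V$. Equality at $v$ requires Jensen to be sharp on a set of positive measure, which happens precisely when all $|\theta|_w$ with $w\mid v$ coincide.

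For a non-Archimedean place $v=v_P$, first evaluate the inner M\"obius sum in (\ref{thetaconstant}). With $f(P):=(\Normk P^{n+1}-\Normk P)/(\Normk P^{n+1}-1)$ and $h(P):=1-f(P)/\Normk P$, the $A$-sum equals $\prod_{P\mid B}f(P)\prod_{P\mid D,\,P\nmid B}h(P)$, so the outer $B$-sum factors over primes $P\mid D$. Using the identity $\NormK(\D,\up B)^{(n+1)/[\KL:\ks]}=\Normk P^{(n+1)\sum_{w\mid v}\lambda_w\min(v_P(B),t_w)}$ with $t_w:=v_w(\alpha\theta)/e(w/v)$, each $P$-local factor becomes an explicit finite sum over $b\in\{0,1,\ldots,v_P(D)\}$. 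Concavity of $t\mapsto\min(b,t)$ gives the discrete Jensen analogue $\sum_{w\mid v}\lambda_w\min(b,t_w)\le\min(b,\bar t_v)$, with $\bar t_v:=\sum_{w\mid v}\lambda_w t_w$, and this bounds each $P$-local factor by its value with every $t_w$ replaced by $\bar t_v$. A geometric-series computation then yields the desired non-Archimedean bound; the factor $\Normk(\alpha)^{-n}$ is absorbed because $v_P(\alpha)$ shifts every $t_w$ by the same integer. Equality forces all $t_w$ to agree with the integer $\bar t_v$, which is precisely the condition that $\bar c_v\in|\ks_v^*|_v$ is realised by some $\alpha_v\in\ks_v$ with $|\alpha_v|_v=|\theta|_w$ for all $w\mid v$.

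Combining the place-wise bounds with $\prod_v\bar c_v^{d_v}=1$ gives $g_\ks(\theta,n)\le 1$, and equality holds iff the stated condition is met at every place. The main technical obstacle is the non-Archimedean step: carrying out the $P$-sum cleanly, establishing the discrete Jensen inequality with the correct power of $\Normk P$, and matching the equality case to the existence of the promised $\alpha_v\in\ks_v$. The Archimedean analysis, based on the same convexity principle, is considerably more elementary.
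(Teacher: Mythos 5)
Your plan coincides structurally with the paper's own proof: decompose $g_\ks(\theta,n)$ via Proposition~\ref{PropositionProductForm} into the Archimedean factor $V$ and local factors $\gP$, bound each by its value at the averaged local datum $\bar c_v$ (the paper's $p_v$ at infinite places, $\sectwor$ at finite ones), and let the product formula make the bounds telescope to $1$. The Archimedean step via AM--GM/Jensen, including the equality analysis, is exactly Lemma~\ref{lemmaVolumeEstimate}, and the observation that replacing $\alpha$ by $\alpha'$ shifts all $t_w$ by the same integer is Lemma~\ref{lemmaConstantInvariant}. Your identity $\NormK(\D,\up B)^{(n+1)/[\KL:\ks]}=\prod_P\Normk P^{(n+1)\sum_{w\mid v}\lambda_w\min(v_P(B),t_w)}$ is correct, and the discrete Jensen inequality $\sum_w\lambda_w\min(j,t_w)\le\min(j,\bar t_v)$ is exactly the pair of bounds \eqref{q_first_estimate} and \eqref{q_second_estimate}.

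The genuine gap is in the sentence ``A geometric-series computation then yields the desired non-Archimedean bound.'' After the Jensen step you have bounded the $P$-local factor by
$c\bigl(1+c'\sum_{j\ge 0}\Normk P^{(n+1)\min(j,\bar t_v)-j}\bigr)$ with $c=\frac{\Normk P-1}{\Normk P^{n+1}-1}$, $c'=\Normk P^n-1$, but since $\bar t_v$ is in general a non-integer rational, this sum does \emph{not} evaluate to $\Normk P^{n\bar t_v}$: summing the two geometric pieces with $L:=\lceil\bar t_v\rceil$ gives the closed form $\bigl((\Normk P-1)\Normk P^{nL}+(\Normk P^n-1)\Normk P^{(n+1)\bar t_v-L+1}\bigr)/(\Normk P^{n+1}-1)$, which equals $\Normk P^{n\bar t_v}$ only when $\bar t_v=L$. (Try $n=1$, $\bar t_v=1/2$, $\Normk P=2$: one gets $4/3$, not $\sqrt 2$.) Showing this closed form is $\le\Normk P^{n\bar t_v}$ is a nontrivial polynomial inequality in $\Normk P^{1/[\KL:\ks]}$ --- precisely the content of Lemma~\ref{lemmaConstantEstimate}, where the paper reduces it to showing a certain polynomial $h_1$ is nonnegative for $x>1$ via Descartes' rule of signs combined with the fact that $h_1$ has a triple zero at $1$. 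Your proposal treats this as routine; it is in fact the most delicate step of the whole theorem and needs a real argument. (Separately, a small computational slip: your evaluation of the $A$-sum drops the factor $\prod_{P\mid(B^{-1}D,B)}(1-\Normk P^{-1})$, which appears in the paper's Lemma~\ref{lemmaProductForm} whenever $v_P(D)\ge 2$; this does not affect the architecture but would propagate into the claimed ``explicit finite sum over $b$.'')
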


Let us now illustrate Theorem \ref{Thmtheta1} with an example, and thereby explain also the situation regarding lower bounds for $g_\ks(\theta, 1)$. 
Let us first take $\ks=\Q$, and $\theta=\sqrt{p}$ for a prime number $p$.
Then we get the asymptotics
\begin{alignat*}1
\frac{2\sqrt{p}}{p+1}S_{\Q}(1)X^{2}=\frac{24\sqrt{p}}{\pi^2(p+1)}X^2.
\end{alignat*}
More generally, if $p$ is inert in $\ks$ and $\theta=\sqrt{p}$ then we get the asymptotics  
\begin{alignat}1\label{example2}
\frac{2p^{d/2}}{p^d+1}S_{\ks}(1)X^{2d}.
\end{alignat}
Letting $p$ tend to infinity shows that there is no lower bound for $g_\ks(\theta,1)$ that is uniform in $\theta$.
Likewise, fixing a $p$ and taking a sequence $\IQ, \ks_1, \ks_2, \ldots$ of number fields with $p$ inert in $\ks_i$ and $[\ks_i:\IQ]\rightarrow \infty$
shows that there is no lower bound for $g_\ks(\theta,1)$ that is uniform in $d$.

The fast decay of $g_\ks(\sqrt{p},1)$ as $p$ runs over the set $\mathbf{P}_\ks$ (which we define as the set of positive rational primes inert in $\ks$)
suggests another problem. Let
$$\sqrt{\mathbf{P}_\ks}\ks:=\{\sqrt{p}\alpha: p\in \mathbf{P}_\ks, \alpha\in \ks\}=\bigcup_{p \in \mathbf{P}_\ks}\sqrt{p}\ks.$$
The above set has uniformly bounded degree, and thus, by Northcott's Theorem, we may consider its counting function 
$N(\sqrt{\mathbf{P}_\ks}\ks,X):=|\{\beta\in \sqrt{\mathbf{P}_\ks}\ks: H(\beta)\leq X\}|$. Now if $d>2$ then the sum over the terms
in (\ref{example2}) converges, so it is natural to ask whether the asymptotics of $N(\sqrt{\mathbf{P}_\ks}\ks,X)$ are given simply
by summing the asymptotics of $N(\sqrt{p}\ks,X)$ over $\mathbf{P}_\ks$. The following result positively answers this question.

\begin{theorem}\label{Thmvartheta}
Let $\ks$ be a number field of degree $d$. Then, as $X\geq 3$ tends to infinity, we have
\begin{alignat*}1
N(\sqrt{\mathbf{P}_\ks}\ks,X)=
\begin{cases}S_{\ks}(1)X^{4}\log\log X+O(X^{4})&\text{ if } d=2\text,\\
\left(\sum_{\mathbf{P}_\ks}\frac{2p^{d/2}}{p^d+1}\right)S_{\ks}(1)X^{2d}+O(X^{2d-1}\mathcal{L})&\text{ if }d>2\text,
\end{cases}
\end{alignat*}
where $\mathcal{L}=\log\log X$ if $d=3$ and $\mathcal{L}=1$ if $d>3$.  The implicit constant in the $O$-term depends on $\ks$.
\end{theorem}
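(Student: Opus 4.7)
My plan is to decompose the count as a sum over inert primes, apply Theorem~\ref{Thmtheta1} to each summand with an error uniform in $p$, and then handle the resulting sum by a Mertens-type estimate together with a tail bound.

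First I would observe that for distinct $p, q \in \mathbf{P}_\ks$, the quadratic extensions $\ks(\sqrt{p})$ and $\ks(\sqrt{q})$ are different: otherwise $\sqrt{p/q} \in \ks$ would give $\mathfrak{p}\mathfrak{q}^{-1} = (\alpha)^2$ for the distinct prime ideals $\mathfrak{p} = p\O_\ks$, $\mathfrak{q} = q\O_\ks$, contradicting unique factorization of fractional ideals. Hence $\sqrt{p}\ks \cap \sqrt{q}\ks = \{0\}$, so
\[
N(\sqrt{\mathbf{P}_\ks}\ks, X) = \sum_{p \in \mathbf{P}_\ks} N(\sqrt{p}\ks, X) + O(1).
\]
The leading constant $g_\ks(\sqrt{p},1) = 2p^{d/2}/(p^d+1)$ can be read off from the definition (\ref{thetaconstant}) (it is exactly what is used in (\ref{example2})), so Theorem~\ref{Thmtheta1} provides the asymptotic for each summand, but with an implicit constant depending on $p$.

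The principal technical step, which I expect to be the main obstacle, is to make this $p$-dependence explicit. For this I would revisit the proof of the more general Theorem~\ref{generalthm}, whose explicit error term is advertised in the introduction, and track how the Archimedean volumes $V_v$ from (\ref{defV}) and the ideal-theoretic coefficients in (\ref{thetaconstant}) scale with $\theta = \sqrt{p}$. The target is an estimate of the shape
\[
N(\sqrt{p}\ks, X) = \tfrac{2p^{d/2}}{p^d+1}\, S_\ks(1)\, X^{2d} + O_\ks(p^{A} X^{2d-1})
\]
for some absolute $A = A(d)$, together with a uniform tail bound $N(\sqrt{p}\ks, X) \ll p^{-d/2} X^{2d}$ for large $p$, reflecting the decay $g_\ks(\sqrt{p},1) \asymp p^{-d/2}$ dictated by (\ref{thetaconstant}). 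Both should fall out of the same refined analysis.

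With these two estimates I would introduce a cutoff $Y = Y(X)$, use the uniform asymptotic for $p \leq Y$ and the tail bound for $p > Y$. The main terms assemble to $S_\ks(1) X^{2d} \sum_{p \in \mathbf{P}_\ks,\, p \leq Y} 2p^{d/2}/(p^d+1)$. For $d = 2$, since $2p/(p^2+1) = 2/p + O(p^{-3})$ and $\mathbf{P}_\ks$ has natural density $1/2$ by Dirichlet's theorem, Mertens's theorem in its Chebotarev-class form yields $\sum_{p \in \mathbf{P}_\ks,\, p \leq Y} 2/p = \log\log Y + O(1)$; a choice $Y = X^{c}$ then produces the announced $S_\ks(1) X^4 \log\log X + O(X^4)$, the tail contributing only $O(X^4)$ thanks to the bound $N(\sqrt{p}\ks,X)\ll X^4/p$. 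For $d \geq 3$ the full series $\sum_{p\in\mathbf{P}_\ks} 2p^{d/2}/(p^d+1)$ converges by comparison with $\sum p^{-d/2}$; the tail of the main term contributes $O(X^{2d} \sum_{p > Y} p^{-d/2})$, the summed Theorem~\ref{Thmtheta1} errors contribute $O(Y^{A+1} X^{2d-1}/\log Y)$, and optimizing $Y$ gives $O(X^{2d-1})$ for $d > 3$ and $O(X^{2d-1} \log\log X)$ for $d = 3$, matching the stated $\mathcal{L}$.
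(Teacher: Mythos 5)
Your high-level strategy is the right one and matches the paper's: write the count as a sum over inert primes, prove a version of the per-prime asymptotic whose error term is explicit in $p$, and then handle the $d=2$ case with Chebotarev and partial summation. Your observation that the sets $\sqrt{p}\ks$ for distinct $p\in\mathbf{P}_\ks$ meet only in $0$ is correct and is the same starting point as in Section \ref{section9}.

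The genuine gap is that the entire weight of the argument rests on the $p$-uniform error term, and you leave it at ``I would revisit the proof of Theorem~\ref{generalthm} and track how constants scale'', positing an error of shape $O_\ks(p^A X^{2d-1})$ with unspecified $A$. But for the scheme you describe to close, one needs $A<0$ -- in fact one needs $A<-1$ for $d>3$, $A\leq -1$ for $d=3$, and $A\leq -1/2$ for $d=2$ -- so the sign and size of $A$ are not a detail to be deferred. The paper's Proposition~\ref{properrorterm} delivers exactly
\[
N(\sqrt{p}\ks^*,X)=\frac{2p^{d/2}}{p^d+1}S_\ks(1)X^{2d}+O\!\left(\frac{X^{2d-1}}{p^{(d-1)/2}}+X^d\log X+X^d\log p\right),
\]
i.e.\ $A=-(d-1)/2$ for the dominant error piece, and obtaining this decay in $p$ is not a bookkeeping exercise: it requires a change of variables by the automorphism $\Phi$ of \eqref{Phi} that rebalances the two coordinate blocks (Lemma~\ref{LipBall}, Lemma~\ref{mainlemma2}), together with the two-block successive-minima estimate of Lemma~\ref{latticeminimaestimates} fed into Lemma~\ref{Lemmacountinglatticepts} with $a=d+1$. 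Without this idea a naive tracking of constants through Theorem~\ref{generalthm} gives an error that grows in $p$ (note $C_\en^{fin}=C_\en^{inf}=\sqrt{p}$ by \eqref{Cenhere}, so the $A_\en$ of \eqref{Aen} is polynomial in $p$ with a positive exponent), and your cutoff/tail scheme then fails.

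A secondary but real issue is the tail. You propose $N(\sqrt{p}\ks,X)\ll p^{-d/2}X^{2d}$ and, for $d=2$, want to conclude $\sum_{p>Y}N(\sqrt{p}\ks,X)\ll X^4\sum_{p>Y}1/p$, which diverges. What you actually need is the elementary Lemma~\ref{pX2}: since $\sqrt{p}\O_\KL$ is a prime of $\KL=\ks(\sqrt p)$ at which $v(\sqrt{p}\alpha)$ is odd, $H(\sqrt{p}\alpha)\geq\sqrt{p}$, so $N(\sqrt{p}\ks^*,X)=0$ for $p>X^2$. With that hard cutoff the paper dispenses with a separate tail bound altogether: it simply sums Proposition~\ref{properrorterm} over $p\leq X^2$, bounds $\sum_{p\leq X^2}X^{2d-1}/p^{(d-1)/2}$ by the cases $d\geq4$, $d=3$, $d=2$, bounds $\sum_{p\leq X^2}X^d\log(pX)\ll X^{d+2}$ by the prime number theorem, and for $d=2$ evaluates $\sum_{p\in\mathbf{P}_\ks,\,p\leq X^2}2/p=\log\log X+O(1)$ via Chebotarev and partial summation -- the last step being the one piece of your outline that is both correct and essentially identical to the paper's.
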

The case $d=2$ is just slightly more difficult than $d > 2$ and requires additionally Chebotarev's density theorem and
partial summation. However, it is not clear to us how to handle the case $d=1$.

Finally, let us mention that Theorem \ref{Thmtheta1} can also be used to count the elements in the nonzero, e.g., square classes $\ks^*/(\ks^*)^2$. Each class has the form $\gamma\cdot (\ks^*)^2$ with some $\gamma\in\ks^*$.
To count the number $N(\gamma\cdot (\ks^*)^2,X)$ of elements in this square class with height no larger than $X$ we note that $H(\gamma \alpha^2)=H(\sqrt{\gamma}\alpha)^2$, and thus
$N(\gamma\cdot (\ks^*)^2,X)=(1/2)(N(\sqrt{\gamma}\ks,\sqrt{X})-1)$. E.g., the square class $(\Q^*)^2$ has asymptotically $(6/\pi^2)X$ elements whereas the square class
$11\cdot(\Q^*)^2$ has asymptotically only $(\sqrt{11}/\pi^2)X$ elements of height bounded by $X$.\\

Next we generalize Theorem \ref{Thmtheta1} to higher dimensions.  Let
$N(\theta \ks^n,X)$ be the number of points $\balf=(\alpha_1,\ldots,
\alpha_n)\in \ks^n$ with $H((\theta\alpha_1,\ldots,
\theta\alpha_n))\leq X$.  Of course, here $H:\Qbar^n\rightarrow [1,\infty)$ is the (affine) absolute
multiplicative Weil height, defined by
\begin{equation*}
  H(\omega_1, \ldots, \omega_n)^{[\KL:\Q]} := \prod_{w\in M_\KL}\max\{1,|\omega_1|_w,\ldots, |\omega_n|_w\}^{d_w},
\end{equation*}
where $K$ is any number field containing $\omega_1, \ldots, \omega_n$,
the index $w$ runs over the set $M_\KL$ of all places of $\KL$, and
$d_w := [\KL_w : \Q_w]$ denotes the local degree, where $\Q_w$ is the completion of $\Q$
with respect to the place below $w$.

\begin{theorem}\label{Thmthetan}
Let $\theta$ be a nonzero algebraic number, let $\ks$ be a number field, denote its degree by $d$, and let $n$ be a positive rational integer. 
Then, as $X\geq 1$ tends to infinity, we have 
\begin{alignat*}1
N(\theta \ks^n,X)=g_\ks(\theta,n)S_\ks(n)X^{d(n+1)}+O(X^{d(n+1)-1}\mathfrak{L}),
\end{alignat*}
where $\mathfrak{L}:=\log(X+1)$ if $(n,d)=(1,1)$, and $\mathfrak{L}:=1$ otherwise. The implicit constant in the $O$-term depends 
on $\theta$, on $\ks$, and on $n$.
\end{theorem}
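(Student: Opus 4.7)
The plan is to deduce Theorem \ref{Thmthetan} from the general Schanuel-type counting result Theorem \ref{generalthm} announced for Section \ref{section6}. The first step is to exhibit the map $\balf\mapsto H(\theta\alpha_1,\ldots,\theta\alpha_n)$ on $\ks^n$ as coming from a height on $\IP^n(\ks)$ of the class introduced in that section. Using the product formula for $H$ over $\KL=\ks(\theta)$ together with the equality $|\gamma|_w=|\gamma|_v$ for $\gamma\in\ks$ and $w\mid v$, one factors the product over $w\in M_\KL$ as a product over $v\in M_\ks$ of local factors
\begin{alignat*}1
N_v(z_0,\ldots,z_n):=\prod_{w\mid v}\max\{|z_0|_v,|\theta|_w|z_1|_v,\ldots,|\theta|_w|z_n|_v\}^{[\KL_w:\ks_v]/[\KL:\ks]}.
\end{alignat*}
At an Archimedean $v$ this is precisely the expression whose unit-ball volume is $V_v$ from (\ref{defV}); at a non-Archimedean $v$ it is a local height factor of the kind admitted in Section \ref{section6}, whose values may lie outside the value group of $v$ when $\KL/\ks$ is nontrivial at $v$. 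After weighting each $N_v$ by $[\ks_v:\Q_v]/d$, the product over $v$ reproduces $H(\theta\alpha_1,\ldots,\theta\alpha_n)$ evaluated on the affine point $(1,\alpha_1,\ldots,\alpha_n)$, so $N(\theta\ks^n,X)$ becomes an instance of the counting problem handled by Theorem \ref{generalthm}.

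Applying Theorem \ref{generalthm} then immediately gives an asymptotic of the form $CX^{d(n+1)}+O(X^{d(n+1)-1}\mathfrak{L})$ with the logarithmic factor present precisely in the classical Schanuel case $(n,d)=(1,1)$; the exponent and the shape of the error term are direct outputs of that general theorem. It remains to verify that the constant $C$ equals $g_\ks(\theta,n)S_\ks(n)$. The Archimedean contribution to $C$ accounts for $V=V(\theta,\ks,n)$ together with the Archimedean part $(2^r(2\pi)^s/\sqrt{|\Delta_\ks|})^{n+1}(n+1)^{r+s-1}$ of Schanuel's constant, while the global factor $h_\ks R_\ks/(\wk\zeta_\ks(n+1))$ arises in the standard way from the class number, the regulator, and a Dirichlet-series computation. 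The non-Archimedean contribution has to be rewritten as the finite Möbius double sum over $B\mid D$ and $A\mid B^{-1}D$ in (\ref{thetaconstant}); to set this up, one fixes any nonzero $\alpha\in\O_\ks$ with $\alpha\theta\in\O_\KL$, sets $\D=\alpha\theta\O_\KL$ and $D=\down\D$, and uses the prefactor $1/\Normk(\alpha)^n$ to normalize away the choice of $\alpha$. At each prime $P$ dividing $AB$, a local lattice-point count then yields the Euler factor $(\Normk P^{n+1}-\Normk P)/(\Normk P^{n+1}-1)$ appearing in (\ref{thetaconstant}).

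The main obstacle I anticipate is precisely this last identification: Theorem \ref{generalthm} will deliver the leading constant in some canonical but unilluminating form, and the real work is to rewrite it as the concrete expression $g_\ks(\theta,n)$ of (\ref{thetaconstant}). The introduction already stresses that this is the nontrivial part of the paper, since the representation is tailored to make the sharp bound of Theorem \ref{gisatmostone} visible and to exhibit the invariance (\ref{invarianceproperty}). The announced Lemma \ref{lemmaConstantInvariant} (independence of the choice of $\alpha$) and Proposition \ref{PropositionProductForm} (product form, giving $g_\ks(\theta,n)>0$) should serve as the checks that pin the representation down.
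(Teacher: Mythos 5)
Your route matches the paper's: define the adelic Lipschitz system $\en$ on $\KL = \ks(\theta)$ by $N_w(\vz) = \max\{|z_0|_w, |\theta|_w|z_1|_w, \ldots, |\theta|_w|z_n|_w\}$, observe that the $N_w$ are norms so $\en'$ is an adelic Lipschitz system on $\ks$, apply Theorem \ref{generalthm}, and then identify the constant $\omega_\ks^{-1}(n+1)^{r+s-1}R_\ks V_{\en'} g_\ks^\en$ with $g_\ks(\theta,n)S_\ks(n)$. But two steps carrying essentially all of the technical weight are left open. First, the passage from the projective count $N_\en(\IP^n(\ks),X)$ to the affine count $N(\theta\ks^n,X)$ requires showing that the points on the hyperplane $\balpha_0 = 0$ contribute only $O(X^{nd})$; the paper handles this by a separate application of Theorem \ref{generalthm} in dimension $n-1$ with the truncated system $N_w(z_1,\ldots,z_n) = \max\{|\theta|_w|z_1|_w,\ldots,|\theta|_w|z_n|_w\}$. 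Your sketch identifies the affine point $(1,\alpha_1,\ldots,\alpha_n)$ but never addresses the boundary.

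Second, and more importantly, the identification of the constant is the bulk of Section \ref{section8}, and the lemmas you cite as ``the checks that pin the representation down'' --- Lemma \ref{lemmaConstantInvariant} and Proposition \ref{PropositionProductForm} --- are Section \ref{section2} results about $g_\ks(\theta,n)$ as already defined by (\ref{thetaconstant}) (independence of $\alpha$, positivity); they do not touch $g_\ks^\en$. The paper's actual path is: choose $\mathcal R$ coprime to $D$, pin down $S_C$ and $T$ explicitly via contraction/extension identities for ideals, prove a determinant formula for $\Lambda(\il A\il D, CE)$ in terms of $\Normk(\down\il A \cap B)$ and $\Normk(\down(\il A(\theta\O_\KL,\il A)^{-1})\cap B)^n$, evaluate the inner sum over $E$ as a Dirichlet series with an Euler product --- this, not a ``local lattice-point count,'' is what produces $1/\zeta_\ks(n+1)$ and the factor $\prod_{P\mid AB}(\Normk P^{n+1}-\Normk P)/(\Normk P^{n+1}-1)$ --- and finally collapse a residual M\"obius sum $s(A,B)$ over ideals of $\O_\KL$ to $\mu_\ks(A)$ by inclusion--exclusion. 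None of this appears in your proposal, so while the strategy is the correct one, the proof as sketched has a substantial gap precisely where you anticipated trouble.
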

Of course the invariance property (\ref{invarianceproperty}) remains valid for arbitrary $n$ instead of $1$.
Ange \cite[Corollaire 1.6]{Ange} has shown a related result (although with different choice of the height); instead of fixing one $\theta$ he allows a different $\theta$ for each coordinate and his
error term is completely explicit and quite sharp. On the other hand he requires that a (positive) power of each $\theta$ lies in the ground field $\ks$.

So far we have counted elements $\theta\balf$ in $\theta\ks^n$ of bounded height.
What if we replace the set $\theta \ks$ by $\theta+\ks$? Or $\theta \ks^2$ by $\theta_1 \ks\times \theta_2 \ks$? 
More generally, we suppose $L_1,\ldots, L_n$ are linearly independent linear forms in $n$ variables with coefficients in $\Qbar$ and $\theta_1,\ldots, \theta_n$ are in $\Qbar$.
Suppose we want to count elements of bounded height in the set 
\begin{alignat*}1
\{(L_1(\balf)+\theta_1,\ldots,L_n(\balf)+\theta_n): \balf\in \ks^n\}.
\end{alignat*}
Now let $\balf:=(\balpha_1/\balpha_0,\ldots,\balpha_n/\balpha_0)\in \ks^n$ and define $\bbalf:=(\balpha_0,\ldots,\balpha_n)$. Then
\begin{alignat*}1
H((L_1(\balf)+\theta_1,\ldots,L_n(\balf)+\theta_n))=\prod_{w}\max\{|\mathcal{L}_0(\bbalf)|_w,\ldots, |\mathcal{L}_n(\bbalf)|_w\}^{\frac{[\KL_w : \Q_w]}{[\KL : \Q]}},
\end{alignat*}
where $\mathcal{L}_0(\bbalf)=\balpha_0$ and $\mathcal{L}_i(\bbalf)=L_i(\balpha_1,\ldots,\balpha_n)+\theta_i\balpha_0$ (for $1\leq i\leq n$), which give us $n+1$ linearly independent
linear forms. Here the right hand-side defines a special case of a so-called adelic Lipschitz height $\Hen$ (introduced in \cite{art1}) on $\P^n(\KL)$, 
where $\KL$ is any number field containing $\ks$, and
the coefficients of $\mathcal{L}_0,\ldots,\mathcal{L}_n$, and the product runs over all places $w$ of $\KL$. Thus, we need to count the points 
$P=(\balpha_0:\cdots:\balpha_n)\in \P^n(\ks)$ with
$\balpha_0\neq 0$ and $\Hen(P)\leq X$.

These generalizations of Loher and Masser's problem naturally motivate our general theorem (Theorem \ref{generalthm}), which is as follows. 
Given two number fields $\ks\subseteq \KL$ and an adelic Lipschitz height $\Hen$ on $\KL$, we give an asymptotic formula for the number of points $P\in \mathbb P^n(\ks)$ with 
$\Hen(P)\leq X$,
as the parameter $X$ tends to infinity. To be more accurate, we also impose a minor additional assumption on the adelic Lipschitz height $\hen$,
which seems fulfilled in all natural applications, in particular, it holds in the aforementioned examples.

The special case $\KL=\ks$ of our general theorem follows from a result in \cite{art1}. There, a complementary 
result was proved, in the sense that points of $\P^n(\KL)$ defined over a proper subextension of $\KL/\ks$ were excluded from the counting
(which is insignificant for the main term but was needed to obtain good error terms).

Now already with general linear forms as above it seems unlikely that the main term can be brought into an as 
civilized form as for Theorem \ref{Thmthetan} (see also the remark in \cite[p. 1766 third paragraph]{art3}). Indeed, a considerable part of our work 
consists of finding the simple representation of the constant in the special case of Theorem \ref{Thmthetan}.
However, it turns out that the given representation is not so convenient for theoretical considerations. Indeed, even the most
obvious properties, such as the invariance property (\ref{invarianceproperty}), are not immediately clear from the present definition.
In Section \ref{section2} we establish a representation of $g_\ks(\theta,n)$ as a product of local factors (Proposition \ref{PropositionProductForm}), which 
is a first step in the proof of Theorem \ref{gisatmostone} and also reveals the invariance property (\ref{invarianceproperty}). 
  
At any rate, a situation involving linear forms similar to the above turns up if we want to count solutions of a system of linear equations with certain restrictions 
to the coordinates of the solutions.
Here is an example. Consider the equation 
\begin{alignat}1\label{lineq}
\sqrt{2}x+\sqrt{3}y+\sqrt{5}z=0,
\end{alignat}
defined over $\KL=\Q(\sqrt{2},\sqrt{3},\sqrt{5})$. Using arguments from \cite{art3} one can easily compute that the number of solutions $(x,y,z)\in \KL^3$
with $H((x,y,z))\leq X$ is asymptotically given by
\begin{alignat*}1
\left(\frac{\sqrt{96}-(\sqrt{2}+\sqrt{3}-\sqrt{5})^2}{\sqrt{480}}\right)^8S_K(2)X^{24}+O(X^{23}).
\end{alignat*}
But what about the number of such solutions whose first two coordinates are rational?
This question reduces to counting the elements $(\balpha_0:\balpha_1:\balpha_2)\in \P^2(\Q)$ with bounded adelic Lipschitz height
\begin{alignat*}1
\Hen((\balpha_0:\balpha_1:\balpha_2)) =
\prod_{w}\max\{|\balpha_0|_w,|\balpha_1|_w,|\balpha_2|_w,|\frac{\sqrt{2}\balpha_1+\sqrt{3}\balpha_2}{\sqrt{5}}|_w\}^{\frac{[\KL_w : \Q_w]}{[\KL : \Q]}}.
\end{alignat*}
Applying our general theorem gives the following asymptotic formula
\begin{alignat}1\label{lineqexample}
\lineqcountingfunction(X) = V_{\en'} \cdot \frac{1}{62\zeta(3)}\cdot(1 + 2\cdot 5^{1/4} + 4 \cdot 5^{-1/2}) X^3 + O(X^2) 
\end{alignat}
for the number $\lineqcountingfunction(X)$ of solutions $(x,y,z)$ of (\ref{lineq}) of height bounded by $X$ and with $x,y\in \Q$.
Here $V_{\en'}$ denotes the volume of the set of points $(z_0,z_1,z_2)$ in $\IR^3$ that satisfy the inequality 
$$\max\{|z_0|,|z_1|,|z_2|,|\sqrt{2}z_1+\sqrt{3}z_2|/\sqrt{5}\}\max\{|z_0|,|z_1|,|z_2|,|\sqrt{2}z_1-\sqrt{3}z_2|/\sqrt{5}\}<1.$$
For the computations we refer the reader to the appendix. 
 
Finally, by Northcott's theorem there is no need to restrict to a fixed number field, and one could also consider all number fields of a given fixed degree
simultaneously. 
Let us define the set 
\begin{alignat*}1
\theta k(n;e)=\{(\theta\alpha_1,\ldots,\theta\alpha_n): [k(\alpha_1,\ldots,\alpha_n):k]=e\}.
\end{alignat*}
So Theorem \ref{Thmthetan} gives the asymptotics for the counting function $N(\theta k(n;1),X)=N(\theta k^n,X)$, 
and more generally, one could ask for the asymptotics of $N(\theta k(n;e),X)$.
The special case $\theta \in k$ was considered in  \cite{22}, \cite{14}, \cite{7}, \cite{37}, \cite{1}, and \cite{art2}. Indeed, it is likely that the
methods from \cite{art1} and \cite{art2}, combined with those of the present article, are sufficient to solve this problem, provided $n$ is large enough.
On the other hand, it would be interesting to know whether Masser and Vaaler's approach from \cite{1} can be combined with ours to handle the case $n=1$. \\

The plan of the paper is as follows. In Section \ref{section2} we establish a product representation of $g_\ks(\theta,n)$, and we use this to
deduce some of its properties. This product form is also the starting point in the proof of Theorem \ref{gisatmostone}, which we give in Section \ref{section3}.
Then in Section \ref{section4} we state and prove some basic facts about lattice points, which are required for the proofs of Theorem \ref{generalthm} and Theorem \ref{Thmvartheta}.
Section \ref{section5} provides the necessary notions such as adelic Lipschitz systems to state our general theorem. Then in Section \ref{section6} we state the general
theorem (Theorem \ref{generalthm}), and in Section \ref{section7} we give its proof. From Theorem \ref{generalthm} we deduce Theorem \ref{Thmthetan}, which is done in 
Section \ref{section8}.
The proof of Theorem \ref{Thmvartheta} is carried out in Section \ref{section9}. Finally, in the appendix we calculate formula (\ref{lineqexample}) using
Theorem \ref{generalthm}. \\

By a prime ideal we always mean a nonzero prime ideal. By $E \unlhd \O_\ks$, we mean that $E$ is a nonzero ideal of $\O_\ks$. An empty product is always interpreted as $1$, and an empty sum is interpreted as $0$. 

\section{Product representation and invariance properties of the constant}\label{section2}
In this section, we use a product representation for the constant $g_\ks(\theta,n)$ to derive some of its properties. Let $\D$, $B$ be nonzero ideals of $\O_\KL$ or $\O_\ks$,
respectively. For convenience, we define
\[\normquot(\D, B) := \normquot(\D, B, n) := \frac{\NormK(\D, \up{B})^{(n+1)/[\KL:\ks]}}{\Normk B}\text.\]
Clearly, $\normquot(\D, B)$ is multiplicative in $B$, by which we mean that $\normquot(\D,B_1B_2) = \normquot(\D,B_1)\normquot(\D,B_2)$ whenever $(B_1, B_2) = 1$. Moreover, $\normquot(\D, B) = \normquot((\D, \up B), B)$, and if $B_1 \mid B_2$, then $\normquot(\up B_2\D, B_1) = \Normk B_1^n$ and $\normquot(\up B_1\D, B_2) = \Normk B_1^n \normquot(\D, B_1^{-1}B_2)$.

We now define local factors at prime ideals $P$ of $\O_\ks$, by
\begin{align*}
  \gP(\D, n) := \frac{\Normk P - 1}{\Normk P^{n+1}-1}\left(1+(\Normk P^n - 1)\sum_{j=0}^\infty \normquot(\D,P^j)\right).
\end{align*}
Let $v_P$ denote the $P$-adic valuation on $\ks$, normalized by $v_P (\ks^*) = \IZ$.
The infinite sum converges, since
\begin{equation}\label{eq:normquot_relation}
\normquot(\D,P^j) = \Normk
P^{v_P(\down \D)-j}\normquot(\D,P^{v_P(\down \D)})
\end{equation}
holds for all $j
\geq v_P(\down \D)$. 
Clearly, $\gP(\D, n) = \gP(\D_P, n)$, where $\D_P
:= \prod_{\mathfrak{P}\mid P}\mathfrak{P}^{v_\mathfrak{P}(\D)}$ is the
part of $\D$ lying over $P$.
\begin{lemma}\label{lemmaProductForm}
Let $\D$ be a nonzero ideal of $\O_\KL$ and $D := \down\D$. Then 
\[\sum_{B \mid D}\normquot(\D, B)\sum_{A \mid B^{-1}D}\frac{\mu_\ks(A)}{\Normk A}\prod_{P \mid AB}\frac{\Normk P^{n+1} - \Normk P}{\Normk P^{n+1} - 1} = \prod_{P}\gP(\D, n)\text.\]
\end{lemma}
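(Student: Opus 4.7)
My plan is to prove the identity by rewriting the left-hand side as an Euler product $\prod_P L_P$ over primes $P$ of $\O_\ks$ and verifying $L_P = \gP(\D, n)$ for each $P$. The Euler product structure will follow from combining four multiplicativity facts: $\normquot(\D, \cdot)$ is multiplicative in its second argument (noted just before the lemma); $\mu_\ks$ and $\Normk$ are multiplicative; the divisibility conditions $B \mid D$ and $A \mid B^{-1}D$ translate into the local inequalities $v_P(B) \leq v_P(D)$ and $v_P(A) + v_P(B) \leq v_P(D)$ prime by prime; and the product $\prod_{P \mid AB}$ is already prime-indexed, with the membership $[P \mid AB] = [v_P(A) + v_P(B) \geq 1]$ purely local. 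Once this is in place, setting $e := v_P(D)$ and $f(P) := (\Normk P^{n+1} - \Normk P)/(\Normk P^{n+1} - 1)$, and using $\mu_\ks(P^a) = 0$ for $a \geq 2$ to restrict to $a \in \{0, 1\}$, the local factor reads
\[
L_P \;=\; \sum_{b = 0}^{e}\normquot(\D, P^b)\sum_{a = 0}^{\min(1,\, e - b)}\frac{\mu_\ks(P^a)}{\Normk P^a}\,f(P)^{[a + b > 0]}.
\]

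I would dispatch $P \nmid D$ first: here $e = 0$ gives $L_P = 1$, and on the other side $(\D, \up{P^j}) = \O_\KL$ for all $j$ makes $\normquot(\D, P^j) = \Normk P^{-j}$, collapsing the geometric series in $\gP(\D, n)$ to $1$ as well. For $P \mid D$ I would execute the inner sum over $a \in \{0, 1\}$ case by case ($b = 0$, $1 \leq b \leq e - 1$, and $b = e$) to obtain
\[
L_P \;=\; 1 - f(P) + f(P)\bigl(1 - \Normk P^{-1}\bigr)\sum_{b = 0}^{e - 1}\normquot(\D, P^b) + f(P)\,\normquot(\D, P^e).
\]
To match this with $\gP(\D, n)$, I would use \eqref{eq:normquot_relation} to compute $\sum_{j \geq e}\normquot(\D, P^j) = \normquot(\D, P^e)\cdot \Normk P/(\Normk P - 1)$, and then apply the algebraic identities $1 - f(P) = (\Normk P - 1)/(\Normk P^{n+1} - 1)$ and $f(P)(1 - \Normk P^{-1}) = (\Normk P - 1)(\Normk P^n - 1)/(\Normk P^{n+1} - 1)$ together with $(1 - \Normk P^{-1})\Normk P/(\Normk P - 1) = 1$, which reduces the tail contribution to $f(P)\,\normquot(\D, P^e)$ and reproduces the displayed expression for $L_P$.

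The only subtle point will be the Euler product decomposition: the product $\prod_{P \mid AB}f(P)$ couples $A$ and $B$ only superficially, since $P \mid AB$ is determined by the local data $(v_P(A), v_P(B))$, and this observation alone unlocks the factorization. Beyond that, the identification of local factors is a direct algebraic verification powered by the two identities above for $1 - f(P)$ and $f(P)(1 - \Normk P^{-1})$.
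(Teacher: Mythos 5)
Your proof is correct. The key observation — that the entire summand $\normquot(\D,B)\,\mu_\ks(A)\Normk A^{-1}\prod_{P\mid AB}f(P)$ and the nested divisibility constraints $B\mid D$, $A\mid B^{-1}D$ decompose prime-by-prime, so that the double sum factors at once into $\prod_P L_P$ — is sound, and your subsequent computation of $L_P$ and its identification with $\gP(\D,n)$ checks out (including the cancellation $1-f(P)\Normk P^{-1} = (1-f(P))+f(P)(1-\Normk P^{-1})$ that lets you write the $b=0$ term uniformly, and the tail computation $\sum_{j\geq e}\normquot(\D,P^j)=\normquot(\D,P^e)\cdot\Normk P/(\Normk P-1)$ from \eqref{eq:normquot_relation}).

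The route is slightly different from the paper's, and a bit more streamlined. The paper first evaluates the inner sum $S(D,B):=\sum_{A\mid B^{-1}D}\mu_\ks(A)\Normk A^{-1}\prod_{P\mid AB}f(P)$ as a product, then divides by the $B$-independent factor $\prod_{P\mid D}(\Normk P^{n+1}-\Normk P^n)/(\Normk P^{n+1}-1)$ to obtain an auxiliary function $T(D,B)$ that is multiplicative in $B$, and finally factors the outer sum $\sum_{B\mid D}\normquot(\D,B)T(D,B)$ into local pieces. You bypass this two-stage reduction by exploiting locality of the full double sum from the start, avoiding the normalization step and the intermediate object $T(D,B)$. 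The resulting local computations are of course the same in substance, but your single Euler-product step is arguably the more transparent decomposition — at the small cost that your local factor $L_P$ has an inner sum over $a\in\{0,1\}$ to be handled by hand, whereas the paper's formula for $T(D,P^j)$ absorbs that case analysis.
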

\begin{proof}
We start by investigating the expression
\[S(D, B) := \sum_{A \mid B^{-1}D}\frac{\mu_\ks(A)}{\Normk A}\prod_{P \mid AB}\frac{\Normk P^{n+1} - \Normk P}{\Normk P^{n+1} - 1}\]
for a given ideal $B$ of $\O_\ks$ dividing $D$. Clearly,
\[S(D, B)=\prod_{P \mid B}\frac{\Normk P^{n+1} - \Normk P}{\Normk P^{n+1} - 1}\sum_{A \mid B^{-1}D}f(A)\text,\]
where
\[f(A) := \frac{\mu_\ks(A)}{\Normk A}\prod_{\substack{P \mid A\\ P \nmid B}}\frac{\Normk P^{n+1} - \Normk P}{\Normk P^{n+1} - 1}\text.\]
The function $f$ is multiplicative and $f(\O_\ks) = 1$. For any prime ideal $P$ dividing $B^{-1}D$, we have
\[f(P) = \begin{cases}-\Normk P^{-1}&\text{ if } P \mid B\text,\\-(\Normk P^n - 1)/(\Normk P^{n+1} - 1)&\text{ if }P \nmid B\text.\end{cases}\]
Moreover, $f(P^e) = 0$ if $e > 1$. We use 
\begin{equation*}
 \sum_{A \mid B^{-1}D}f(A) = \prod_{P \mid B^{-1}D}(1 + f(P))
\end{equation*}
to obtain
\begin{align*}
S(D, B) &= \prod_{P \mid B}\frac{\Normk P^{n+1} - \Normk P}{\Normk P^{n+1} - 1}\prod_{\substack{P \mid B^{-1}D\\P\nmid B}}\frac{\Norm P^{n+1} - 
\Normk P^n}{\Normk P^{n+1} - 1}\prod_{P \mid (B^{-1} D, B)}\frac{\Norm_k P - 1}{\Normk P}\\
&= \prod_{P \mid D}\frac{\Normk P^{n+1} - \Normk P^n}{\Normk P^{n+1} - 1}\prod_{P \mid B}\frac{\Normk P^{n+1} - \Normk P}{\Normk P^{n+1} - 
\Normk P^n}\prod_{P \mid (B^{-1} D, B)}\frac{\Norm_k P - 1}{\Normk P}\text.
\end{align*}
Let $T(D, B) := S(D, B)/\prod_{P \mid D}\frac{\Normk P^{n+1} - \Normk P^n}{\Normk P^{n+1} - 1}$. Then the expression on the left-hand side of the Lemma is given by
\[\left(\prod_{P \mid D}\frac{\Normk P^{n+1} - \Normk P^n}{\Normk P^{n+1} - 1}\right) \sum_{B \mid D} \normquot(\D, B)T(D, B)\text.\]
Since both $T(D, B)$ and $\normquot(\D, B)$ are multiplicative in $B$, this is equal to
\begin{align}\label{eq:product_form_1}
& \prod_{P \mid D}\left(\frac{\Normk P^{n+1} - \Normk P^n}{\Normk P^{n+1} - 1}\sum_{j=0}^{v_P(D)}\normquot(\D, P^j)T(D, P^j)\right)\text.
\end{align}
Elementary manipulations show that
\[T(D, P^j) = \frac{(\Normk P^n - 1)(\Normk P - 1)}{\Normk P^{n+1} - \Normk P^n}\cdot \begin{cases}\displaystyle
\frac{\Normk P^{n+1} - \Normk P^n}{(\Normk P^n - 1)(\Normk P - 1)} &\text{ if }j=0,\\\\
1 &\text{ if } 1 \leq j < v_P(D)\text,\\
\ &\ \\
\normalsize\displaystyle\sum_{j=v_P(D)}^\infty\Normk P^{v_P(D)-j}&\text{ if } j = v_P(D)\text.\end{cases}\]
Using \eqref{eq:normquot_relation}, this shows that each of the factors in \eqref{eq:product_form_1} has the form
\begin{equation*}
  \frac{(\Normk P-1)(\Normk P^n-1)}{\Normk P^{n+1}-1}\left(\frac{\Normk P^{n+1}-\Normk P^n}{(\Normk P^n - 1)(\Normk P - 1)} + \sum_{j=1}^\infty \normquot(\D,P^j)\right) = g_P(\D,n).
\end{equation*}
\end{proof}

Lemma \ref{lemmaProductForm} with $\D := \alpha \theta \O_\KL$ yields the following formula for $g_\ks(\theta, n)$. 
\begin{proposition}\label{PropositionProductForm}
If $\alpha$ is nonzero and in $\O_\ks$ with $\alpha \theta \in \O_\KL$ then
\begin{equation}\label{constantProductForm}
g_\ks(\theta, n) = \frac{V}{\Normk(\alpha)^n}\prod_P \gP(\alpha\theta\O_\KL, n)\text. 
\end{equation}
\end{proposition}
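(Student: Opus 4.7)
The plan is to recognize Proposition \ref{PropositionProductForm} as an essentially immediate corollary of Lemma \ref{lemmaProductForm}, once the notation is matched up. First I would note that the quantity $\NormK(\D,\up B)^{(n+1)/[\KL:\ks]}/\Normk B$ appearing in the summand of the definition (\ref{thetaconstant}) is, by the definition given at the start of Section \ref{section2}, exactly $\normquot(\D,B)$. Consequently, the double sum occurring in (\ref{thetaconstant}) is literally the left-hand side of Lemma \ref{lemmaProductForm}.

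Next I would check that the hypotheses of the lemma are met by the choice $\D := \alpha\theta\O_\KL$. Since $\alpha$ and $\theta$ are both nonzero and $\alpha\theta \in \O_\KL$, $\D$ is a nonzero ideal of $\O_\KL$; its contraction $\down\D$ is a nonzero ideal of $\O_\ks$ (for instance, $N_{\KL/\ks}(\alpha\theta)$ lies in it), and agrees with the ideal $D$ appearing in (\ref{thetaconstant}). Applying Lemma \ref{lemmaProductForm} then rewrites the inner double sum as $\prod_P \gP(\alpha\theta\O_\KL,n)$, and substituting back into (\ref{thetaconstant}) yields (\ref{constantProductForm}).

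Since the lemma does all the combinatorial work, there is no genuine obstacle here. The only thing worth remarking in passing is that the product $\prod_P \gP(\alpha\theta\O_\KL,n)$ is really a finite product: for prime ideals $P \nmid D$ one has $\normquot(\D, P^j) = \Normk P^{-j}$ by \eqref{eq:normquot_relation}, the geometric series collapses, and a short computation shows $\gP(\D, n) = 1$. This observation is not needed for the proof itself but confirms that the right-hand side of (\ref{constantProductForm}) is well-defined and, together with $V > 0$, that $g_\ks(\theta,n) > 0$ as asserted in the introduction.
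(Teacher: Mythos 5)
Your proof is correct and is exactly the paper's argument: the paper simply notes that Lemma \ref{lemmaProductForm} with $\D := \alpha\theta\O_\KL$ yields the formula, and you have just spelled out the (immediate) notational matching. Your closing remark that $\gP(\D,n)=1$ for $P \nmid D$ is also correct and is a worthwhile sanity check, though the paper does not bother to state it at this point.
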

The next lemma shows that $g_\ks(\theta, n)$ does not depend on the choice of $\alpha$.
\begin{lemma}\label{lemmaConstantInvariant}
Let $A$ be a nonzero ideal of $\O_\ks$ and $\D$ a nonzero ideal of $\O_K$. Then 
\[\gP(\up A \D, n) = \Normk P^{nv_P(A)}\gP(\D, n)\text.\] 
\end{lemma}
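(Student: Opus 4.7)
The plan is to exploit the fact that $\gP$ depends only on the $P$-part of its ideal argument, so both sides only see $a := v_P(A)$ on the $\ks$-side and the primes of $\KL$ above $P$ on the $\KL$-side. In particular, replacing $A$ by $P^a$ does not change either side of the desired identity, so it suffices to establish
\[\gP(\up{P^a}\D, n) = \Normk P^{na}\gP(\D, n).\]

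The main computation is to evaluate the sum $\sum_{j=0}^\infty \normquot(\up{P^a}\D, P^j)$ appearing in the definition of $\gP(\up{P^a}\D, n)$. I would split this according to whether $j < a$ or $j \geq a$, and then apply the two identities for $\normquot$ stated just before the definition of $\gP$: if $B_1 \mid B_2$, then $\normquot(\up{B_2}\D, B_1) = \Normk B_1^n$ and $\normquot(\up{B_1}\D, B_2) = \Normk B_1^n\,\normquot(\D, B_1^{-1}B_2)$. Applied with $B_1 = P^j$, $B_2 = P^a$ in the range $j \leq a$, and with $B_1 = P^a$, $B_2 = P^j$ in the range $j \geq a$ (the two prescriptions agree at $j = a$), this yields
\[\sum_{j=0}^\infty \normquot(\up{P^a}\D, P^j) = \sum_{j=0}^{a-1}\Normk P^{jn} + \Normk P^{an}\sum_{i=0}^\infty \normquot(\D, P^i).\]

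The first sum is the geometric series $(\Normk P^{an} - 1)/(\Normk P^n - 1)$. Substituting this into the bracketed factor of $\gP(\up{P^a}\D,n)$ gives
\[1 + (\Normk P^n - 1)\sum_{j=0}^\infty\normquot(\up{P^a}\D, P^j) = \Normk P^{an}\left(1 + (\Normk P^n - 1)\sum_{i=0}^\infty \normquot(\D,P^i)\right),\]
since the $-1$ cancels against the $+1$. Multiplying both sides by the prefactor $(\Normk P - 1)/(\Normk P^{n+1}-1)$ produces exactly $\Normk P^{na}\gP(\D,n)$, as required. The only real obstacle is the bookkeeping in the case split; once the two $\normquot$-identities are brought to bear, the rest is a geometric-series cancellation.
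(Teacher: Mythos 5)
Your proof is correct and follows essentially the same route as the paper: both compute $\normquot(\up A\D, P^j)$ case by case ($j < v_P(A)$ versus $j \geq v_P(A)$), obtaining precisely the formulas you derive from the two $\normquot$-identities, and then insert into the definition of $\gP$; you simply spell out the geometric-series cancellation that the paper leaves to the reader.
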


\begin{proof}
We have
\[\normquot(\up A \D, P^j) =\begin{cases}\Normk P^{nj}&\text{ if }0 \leq j < v_P(A)\text{, }\\
\Normk P^{n v_P(A)}\normquot(\D, P^{j-v_P(A)})&\text{ if }j \geq v_P(A).\end{cases}\]
The lemma follows by inserting these expressions for $\normquot(\up A\D,P^j)$ in the definition of $\gP(\up A\D,n)$.
\end{proof}

Given nonzero $\alpha$, $\beta \in \O_\ks$ such that $\alpha \theta$, $\beta \theta \in \O_\KL$, then we have 
\[\Normk(\alpha)^n\prod_P \gP(\beta\theta\O_\KL, n) = \prod_P \gP(\alpha\beta\theta\O_\KL, n) = \Normk(\beta)^n\prod_P \gP(\alpha\theta\O_\KL, n)\text,\]
which shows the independence of $g_\ks(\theta, n)$ from the choice of $\alpha$.

To see invariance property \eqref{invarianceproperty} directly from \eqref{constantProductForm}, we need the following lemma. 

\begin{lemma}\label{lemmaVolumeInvariant}
Let $\alpha \in \ks^*$. Then
\[V(\alpha \theta, \ks, n) = \frac{V(\theta, \ks, n)}{\Normk(\alpha)^n}\text.\]
\end{lemma}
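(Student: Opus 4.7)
The plan is to reduce the lemma to a local volume computation at each Archimedean place and then reassemble the local factors via the product formula for $\alpha\in\ks^\ast$. Throughout, I write $V_v(\theta)$ and $A_v(\theta)$ for the volume and the set whose volume defines $V_v$, to make the dependence on $\theta$ explicit.

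First I would note that, since $\alpha\in\ks$, for every place $w$ of $\KL$ above $v$ the restriction of $|\cdot|_w$ to $\ks$ coincides with $|\cdot|_v$ (both extend the same absolute value on $\Q$), so $|\alpha\theta|_w=|\alpha|_v\,|\theta|_w$. Substituting this into the defining inequality shows that the linear isomorphism
\[\psi\colon\ks_v^{n+1}\longrightarrow\ks_v^{n+1},\qquad(z_0,z_1,\dots,z_n)\longmapsto(z_0,\alpha z_1,\dots,\alpha z_n),\]
maps $A_v(\alpha\theta)$ bijectively onto $A_v(\theta)$.

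Next I would compute $|\det\psi|$ with respect to the identification of $\ks_v$ with $\R$ or with $\R^2$. Multiplication by $\alpha$ scales real Lebesgue measure on $\ks_v$ by $|\alpha|_v^{d_v}$, where $d_v=[\ks_v:\R]$: indeed, this is $|\alpha|_v$ at a real place and $|\alpha|_v^2$ at a complex place (the standard complex-to-real Jacobian). Since $\psi$ acts this way on the last $n$ coordinates and is the identity on the $0$-th, one gets $|\det\psi|=|\alpha|_v^{n d_v}$ and hence $V_v(\alpha\theta)=|\alpha|_v^{-n d_v}V_v(\theta)$.

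Finally I would multiply over all infinite places and apply
\[\prod_{v\mid\infty}|\alpha|_v^{d_v}=\prod_{\sigma\colon\ks\hookrightarrow\IC}|\sigma(\alpha)|=\bigl|N_{\ks/\Q}(\alpha)\bigr|=\Normk(\alpha),\]
which is equivalent to the product formula combined with the definition of the ideal norm. This gives $\prod_{v\mid\infty}V_v(\alpha\theta)=\Normk(\alpha)^{-n}\prod_{v\mid\infty}V_v(\theta)$, and multiplying by the $\theta$-independent factor $(2^{r}\pi^{s})^{-(n+1)}$ in \eqref{defV} yields the lemma. The only subtlety is tracking the correct scaling of Lebesgue measure under multiplication by $\alpha\in\IC$ on $\R^2$ (namely $|\alpha|^2$, not $|\alpha|$); beyond this routine bookkeeping there is no real obstacle.
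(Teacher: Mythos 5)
Your proof is correct and takes essentially the same approach as the paper: a linear change of variables on $\ks_v^{n+1}$ scaling the last $n$ coordinates (the paper multiplies by $|\alpha|_v$, you multiply by $\alpha$; the Jacobians agree in absolute value), giving $V_v(\alpha\theta)=|\alpha|_v^{-n[\ks_v:\R]}V_v(\theta)$ at each Archimedean $v$, followed by the identity $\prod_{v\mid\infty}|\alpha|_v^{[\ks_v:\R]}=\Normk(\alpha)$.
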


\begin{proof}
For any Archimedean place $v$ of $\ks$, the map $\phi_v : \ks_v^{n+1} \to \ks_v^{n+1}$ defined by $\phi_v(z_0, z_1 \ldots, z_n) = (z_0, |\alpha|_v z_1, \ldots, |\alpha|_v z_n)$ is a linear automorphism of $\ks_v^{n+1}$ (considered as $\R^{[\ks_v : \R](n+1)})$ of determinant $|\alpha|_v^{[\ks_v : \R] n}$. Therefore, $|\alpha|_v^{[\ks_v : \R] n}V_v(\alpha \theta, \ks, n) = V_v(\theta, \ks, n)$.
\end{proof}

Lemma \ref{lemmaConstantInvariant} and Lemma \ref{lemmaVolumeInvariant} imply that 
\begin{equation}\label{constantInvariant}
g_\ks(\alpha \theta, n) = g_\ks(\theta, n) 
\end{equation}
for every nonzero $\alpha \in \O_\ks$, and hence for every $\alpha \in \ks^*$. In particular, it suffices to prove Theorem \ref{gisatmostone} and 
Theorem \ref{Thmthetan} for integral $\theta$.

\section{Proof of Theorem \ref{gisatmostone}}\label{section3}
We start off by estimating the volume $V(\theta, \ks, n)$.
\begin{lemma}\label{lemmaVolumeEstimate}
We have
\[V(\theta, \ks, n) \leq \NormK(\theta)^{-n/[\KL : \ks]}\text.\]
Moreover, equality holds if and only if for every Archimedean place $v$ of $\ks$ the absolute values 
$|\theta|_w$ are equal for all $w\mid v$.
\end{lemma}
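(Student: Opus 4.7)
My plan is to split the asserted inequality into local factors, one per Archimedean place of $\ks$, and then verify each locally via a simple lower bound on the integrand that defines $V_v$.

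\textbf{Step 1 (Reduction to local inequalities).} The product formula applied to $\theta\in\KL^*$ gives
\begin{equation*}
\NormK(\theta)=\prod_{w\mid\infty}|\theta|_w^{d_w},\qquad d_w:=[\KL_w:\Q_w].
\end{equation*}
Writing $d_w=[\KL_w:\ks_v][\ks_v:\Q_v]$ for $w\mid v$ and setting $e_w:=[\KL_w:\ks_v]/[\KL:\ks]$, $T_v:=\prod_{w\mid v}|\theta|_w^{e_w}$, $d_v:=[\ks_v:\Q_v]$, I obtain
\begin{equation*}
\NormK(\theta)^{-n/[\KL:\ks]}=\prod_{v\mid\infty}T_v^{-nd_v}.
\end{equation*}
Distributing $(2^r\pi^s)^{-(n+1)}$ as $2^{-(n+1)}$ per real place and $\pi^{-(n+1)}$ per complex place, the lemma reduces to the local inequalities $V_v\leq 2^{n+1}T_v^{-n}$ for real $v$ and $V_v\leq\pi^{n+1}T_v^{-2n}$ for complex $v$.

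\textbf{Step 2 (Local upper bound).} Fix $v\mid\infty$ and, for $(z_0,\ldots,z_n)\in\ks_v^{n+1}$, write $a=|z_0|_v$, $Z=\max_{i\geq1}|z_i|_v$, so the maximum appearing in the definition of $V_v$ equals $M_w:=\max(a,T_wZ)$. Since $M_w\geq a$ and $M_w\geq T_wZ$, and $\sum_we_w=1$, the two trivial bounds combine into
\begin{equation*}
\prod_{w\mid v}M_w^{e_w}\geq\max(a,T_vZ).
\end{equation*}
Consequently the set defining $V_v$ sits inside the polydisk $\{|z_0|_v<1,\ |z_i|_v<1/T_v\text{ for }i\geq1\}$, whose volume is $2^{n+1}T_v^{-n}$ (real case) or $\pi^{n+1}T_v^{-2n}$ (complex case).

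\textbf{Step 3 (Equality analysis).} If $|\theta|_w=T_v$ for every $w\mid v$, then $M_w=\max(a,T_vZ)$ is independent of $w$, the inequality of Step 2 holds with equality everywhere, and $V_v$ equals the polydisk volume exactly. Conversely, assume $|\theta|_{w_1}<|\theta|_{w_2}$ for some $w_1,w_2\mid v$; then $T_v<\max_w|\theta|_w$ strictly. I will pick a small closed subinterval $I\subset(T_v,\max_w|\theta|_w)$, consider the cone $\{a/Z\in I\}$, and split the factors $M_w$ according to whether $T_w>t$ or $T_w\leq t$, where $t=a/Z$. An elementary log-linear computation then produces $\prod_wM_w^{e_w}\geq c\cdot\max(a,T_vZ)$ for some constant $c>1$ uniform on $I$. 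Since both sides are $1$-homogeneous in $(z_0,\ldots,z_n)$, an appropriate rescaling yields an open set of positive measure in $\ks_v^{n+1}$ lying inside the polydisk but outside the defining set of $V_v$, proving strict inequality.

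The main obstacle is the quantitative strictness in Step 3: one has to isolate a non-degenerate region where $\prod_wM_w^{e_w}/\max(a,T_vZ)$ is bounded away from $1$, so that the scaling argument actually produces a set of positive measure (and not merely a set on a lower-dimensional stratum). By contrast, the upper bound in Step 2 is almost tautological, emerging immediately from the bounds $M_w\geq a$ and $M_w\geq T_wZ$ together with $\sum_we_w=1$.
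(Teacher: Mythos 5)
Your proof is correct and follows essentially the same route as the paper's: the paper introduces $p_v := \prod_{w\mid v}|\theta|_w^{[\KL_w:\ks_v]/[\KL:\ks]}$ (your $T_v$), shows the defining function $f_v^{(1)}$ of $V_v$ dominates the polydisk function $f_v^{(2)}(\vz) = \max\{|z_0|_v, p_v|z_1|_v,\ldots, p_v|z_n|_v\}$ using exactly the two trivial bounds you use, and deduces the volume bound with equality iff $f_v^{(1)} = f_v^{(2)}$. Your Step 3 equality analysis is more elaborate than necessary: since $f_v^{(1)}$ and $f_v^{(2)}$ are continuous and $1$-homogeneous, if $f_v^{(1)}(\vz_0) > f_v^{(2)}(\vz_0)$ at a single nonzero point, one may rescale $\vz_0$ so that $f_v^{(2)}(\vz_0) < 1 < f_v^{(1)}(\vz_0)$, and continuity immediately supplies an open neighborhood of positive measure inside $\{f_v^{(2)} < 1\}$ but outside $\{f_v^{(1)} < 1\}$ — no cone decomposition or uniform constant $c > 1$ is required.
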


\begin{proof}
  Let $v$ be an Archimedean place of $\KL$, and let $p_v=p_v(\theta)
  := \prod_{w \mid v}|\theta|_w^{\frac{[\KL_w : \ks_v]}{[\KL :
      \ks]}}$. Consider the functions $f_v^{(1)},f_v^{(2)}:\ks_v^{n+1} \to \R$
  given by
  \begin{align*}
    f_v^{(1)}(z_0, \ldots, z_n) &:= \prod_{w \mid v}\max\{|z_0|_v, |\theta|_w |z_1|_v, \ldots, |\theta|_w |z_n|_v\}^{\frac{[\KL_w : \ks_v]}{[\KL : \ks]}},\\
    f_v^{(2)}(z_0, \ldots, z_n) &:= \max\left\{|z_0|_v\text{, }p_v|z_1|_v\text{, }\ldots\text{, }p_v|z_n|_v\right\}.
  \end{align*}
  Then $f_v^{(i)}(t \mathbf{z}) = |t|_vf_v^{(i)}(\mathbf{z})$ holds
  for all $t \in \ks_v$, $\mathbf{z}\in \ks_v^{n+1}$, and
  $i\in\{1,2\}$.  Moreover, $f_v^{(1)} \geq f_v^{(2)}$ as functions on
  $\ks_v^{n+1}$, with equality if and only if $|\theta|_w$ is constant
  on $w\mid v$.

  Now $\Vol\{\mathbf{z}\in \ks_v^{n+1}: f_v^{(1)}(\mathbf{z})<1\}
  \leq \Vol\{\mathbf{z}\in \ks_v^{n+1}: f_v^{(2)}(\mathbf{z})< 1\}$,
  with equality if and only if $f_v^{(1)} = f_v^{(2)}$. Evaluating
  both volumes gives
\begin{alignat}1\label{Vvineq}
V_v \leq p_v^{-n [\ks_v : \R]}\cdot 
\begin{cases}2^{n+1}&\text{if $v$ is real,}\\\pi^{n+1}&\text{if $v$ is complex,}\end{cases}
\end{alignat}
with equality if and only if $|\theta|_w$ is constant on $w\mid v$. Thus,
\[V(\theta, \ks, n) \leq \prod_{w \mid \infty} |\theta|_w^{-\frac{n
    [\KL_w : \R]}{[\KL : \ks]}} = \NormK(\theta)^{-n/[\KL :
  \ks]}\text,\] with equality if and only if the condition in the
lemma is satisfied.
\end{proof}

Let us recall some simple facts, which will be used in the sequel 
without further notice. Let $A$, $B$ be ideals of $\O_\ks$, and let $\il A$, $\il B$ be ideals of $\O_\KL$. Moreover, suppose that $P$ is a prime ideal of $\O_\ks$ and that $\il P$ runs over all prime ideals of $\O_\KL$ above $P$. Then
\begin{itemize}
 \item $v_P(\down \il A) = \max_{\il P \mid P}\{\lceil v_{\il P}(\il A)/e_{\il P} \rceil\}$
 \item $\down\up A = A$
 \item $\il A \mid \up \down \il A$
 \item $\up(A B) = \up A \up B$
 \item $\il A \mid \up A$ if and only if $\down{\il A} \mid A$
\end{itemize}

\begin{lemma}\label{lemmaConstantEstimate}
Let $\D$ be a nonzero ideal of $\O_\KL$ and $P$ a prime ideal of $\O_\ks$. Then 
\[\gP(\D, n) \leq \NormK(\D_P)^{n/[\KL : \ks]}\text,\]
with equality if and only if $\D_P = \up \down \D_P$.
\end{lemma}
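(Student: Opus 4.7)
Using the reduction $\gP(\D,n) = \gP(\D_P, n)$, we may assume $\D = \D_P$ is supported above $P$. Set $q := \Normk P$, $d := v_P(\down \D)$, and $N := \NormK(\D)^{1/[\KL:\ks]}$, so that the target inequality reads $\gP(\D,n) \leq N^n$. Introduce $M_j := \NormK((\D, \up P^j))^{(n+1)/[\KL:\ks]}$, so that $\normquot(\D, P^j) = M_j/q^j$. Since the ideal $(\D, \up P^j)$ divides both $\up P^j$ and $\D$, its absolute norm divides both $q^{j[\KL:\ks]}$ and $\NormK(\D)$; raising to the $(n+1)/[\KL:\ks]$-th power yields the pointwise estimates $M_j \leq q^{(n+1)j}$ and $M_j \leq N^{n+1}$.

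Set $D := \log_q N \in \Q_{\geq 0}$, noting $D \leq d$. Replacing $M_j$ by the pointwise minimum $\min(q^{(n+1)j}, N^{n+1})$ and splitting the resulting sum at $j = \lfloor D \rfloor$ produces a geometric-series majorant $\tilde T$ for $\sum_{j \geq 0} \normquot(\D, P^j)$. Writing $\epsilon := D - \lfloor D \rfloor$ and $y := q^\epsilon \in [1,q]$, an elementary evaluation of $\tilde T$ reduces the bound $\frac{q-1}{q^{n+1}-1}\bigl(1 + (q^n-1)\tilde T\bigr) \leq N^n = q^{nD}$ to the single-variable polynomial inequality
\[
h(y) := (q^{n+1}-1)\,y^n - (q^n-1)\,y^{n+1} \;\geq\; q^n(q-1), \qquad y \in [1, q].
\]
A direct computation gives $h(1) = h(q) = q^n(q-1)$, while $h'(y) = y^{n-1}\bigl(n(q^{n+1}-1) - (n+1)(q^n-1)y\bigr)$ has a unique positive zero which (for $q \geq 2$ and $n \geq 1$) lies in $(1,q)$ and is a maximum of $h$. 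Hence $h(y) > q^n(q-1)$ throughout the open interval $(1,q)$, with equality only at the endpoints. This proves $\gP(\D,n) \leq N^n$, with equality in this step forcing $\epsilon = 0$, that is, $D \in \Z$.

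For the equality characterisation, overall equality further requires $T = \tilde T$. The case $j = D$ of this forces $M_D = q^{(n+1)D}$; but $\NormK(\D)$ and $\NormK(\up P^D)$ both equal $q^{D[\KL:\ks]}$ (using $D = \log_q N$ and $D \in \Z$), and $(\D, \up P^D)$ divides both $\D$ and $\up P^D$, so the norm identity forces $(\D, \up P^D) = \D = \up P^D$. This gives $\D_P = \up P^D = \up\down \D_P$. The heart of the argument is the polynomial inequality for $h$, which simultaneously delivers strict inequality when $D \notin \Z$ and the sharp bound when $D \in \Z$.
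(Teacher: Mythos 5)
Your proposal is correct, and its skeleton is the same as the paper's: you use the identical pointwise estimates $\normquot(\D,P^j)\leq q^{nj}$ and $\normquot(\D,P^j)\leq N^{n+1}q^{-j}$ (these are the paper's \eqref{q_first_estimate} and \eqref{q_second_estimate}, written multiplicatively), split the series at the same index $\lfloor D\rfloor=\lfloor \sectwor\rfloor$, and reduce to exactly the same polynomial inequality --- your $(q^{n+1}-1)y^n-(q^n-1)y^{n+1}\geq q^n(q-1)$ with $y=q^{\epsilon}$ is literally the paper's $h_1(q)\geq 0$ after the substitution $y=q^{\tilde\sectwor}$. Where the two arguments genuinely diverge is in the proof of that inequality. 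The paper treats $h_1$ as a polynomial in $q^{1/[\KL:\ks]}$ with exponents parametrized by $\tilde\sectwor$, orders the six exponents, applies Descartes' rule of signs to cap the number of positive roots at three, and then observes $h_1(1)=h_1'(1)=h_1''(1)=0$. You instead freeze $q$ and $n$, treat $h$ as a degree-$(n+1)$ polynomial in $y$ on $[1,q]$, check that $h(1)=h(q)=q^n(q-1)$, factor $h'(y)=y^{n-1}\bigl(n(q^{n+1}-1)-(n+1)(q^n-1)y\bigr)$ to exhibit a single interior maximum, and conclude strict inequality on $(1,q)$. This is more elementary and also gives the equality clause directly, since it identifies the endpoints $y\in\{1,q\}$ as the only places of equality and $y$ is a priori confined to $[1,q)$. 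You also avoid the paper's preliminary normalization (via Lemma \ref{lemmaConstantInvariant}) to the case $\up P\nmid\D$, which your argument handles uniformly. Two minor points: (i) you establish only the implication ``equality $\Rightarrow\D_P=\up\down\D_P$''; the converse (for $\D_P=\up P^D$ one has $M_j=\min(q^{(n+1)j},N^{n+1})$ for all $j$ and $\epsilon=0$) should be stated, though it follows at once from your formulas; (ii) you should say explicitly that $y=q^{\epsilon}$ lies in the half-open interval $[1,q)$, so that equality at the $h$-step does indeed force $y=1$ rather than $y=q$.
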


\begin{proof}
  Lemma \ref{lemmaConstantInvariant} and the fact that $\gP(\D, n) =
  \gP(\D_P, n)$ imply equality if $\D_P = \up\down \D_P$. Therefore,
  let us assume that $v_P(\down \D)=:l\geq 1$ and that $\D_P$ is a
  proper divisor of $\up \down \D_P = \up P^l$. Again by Lemma
  \ref{lemmaConstantInvariant}, we may assume that $\up P \nmid \D$.

Let
\[\sectwor:= \frac{1}{[\KL : \ks]}\sum_{\mathfrak P \mid P}f_{\mathfrak P}v_\mathfrak{P}(\D)\text,\]
where the sum runs over all prime ideals $\mathfrak P$ of $\O_\KL$
lying over $P$, and $f_{\mathfrak P} = f_{\mathfrak P\mid P} =
[\O_\KL/\mathfrak P : \O_\ks/P]$ is the relative degree of $\mathfrak
P$ over $P$. Then the right-hand side in the lemma is just
$\Normk(P)^{n \sectwor}$. Since $v_P(\down \il D) = l \geq 1$, we get
$\sectwor > 0$. Let $e_\il{P} = e_{\mathfrak{P}\mid P}$ be the
ramification index of $\mathfrak{P}$ over $P$. As $\il D_P$ is a
proper divisor of $\up\down\il D_P$, we conclude that $v_{\il
  P}(\il D)<e_{\il P} v_P(\down \il D)$ for at least one
$\il{P}\mid P$. Thus,
\begin{equation*}
\sum_{\il P \mid P}f_{\il P} v_{\il P}(\il D) < \sum_{\il P\mid P}f_{\il P} e_{\il P}v_P(\down \il D) = [\KL : \ks]\cdot l\text,
\end{equation*}
and therefore $\sectwor < l$. Similarly, we have
\[\normquot(\D, P^j) = \Normk(P)^{\frac{n+1}{[\KL : \ks]}\left(\sum_{\mathfrak P \mid P}f_{\mathfrak P}\min\{v_{\mathfrak P}(\D), j e_{\mathfrak P}\}\right) - j}\text,\]
for any $j \geq 0$. By our assumption that $\up P \nmid \il D$, we have $v_{\il P}(\il D) < j e_{\mathfrak P}$ for some $\il P \mid P$ and all $j \geq 1$.  Replacing all the minima in the above formula by their second arguments yields
\begin{equation}\label{q_first_estimate}
\normquot(\D, P^j) < \Normk P^{j n}\text.
\end{equation}
Similarly, replacing the minima by their first arguments yields
\begin{equation}\label{q_second_estimate}
\normquot(\D, P^j) \leq \Normk(P)^{(n+1)\sectwor - j},
\end{equation}
and the inequality is strict if and only if $j < l$. Let $1 \leq L
\leq l$ be the integer with $L-1 < \sectwor \leq L$. We use
\eqref{q_first_estimate} for $j < L$ and \eqref{q_second_estimate} for
$j \geq L$ to estimate $\normquot(\D,P^j)$ in the definition of
$g_P(\D,n)$. This shows that $g_P(\D,n)$ is bounded from above by
\begin{equation*}
  \frac{1}{\Normk P^{n+1}-1}\left(\Normk P^{Ln+1} - \Normk P^{Ln} + \Normk P^{(n+1)\sectwor - L + n+1} - \Normk P^{(n+1)\sectwor-L+1}\right)\text,
\end{equation*}
with a strict inequality whenever $l>1$. To prove the lemma, it is
enough to show that this expression is bounded by $\Normk P^{n u}$
(with strict inequality if $l=1$). To this end, let $h$ be the
function given by
\begin{align*}
h(x) :&= x^{n\sectwor+n+1} - x^{n\sectwor} + x^{(n+1)\sectwor - L+1} - x^{(n+1)\sectwor - L + n + 1} + x^{Ln} - x^{Ln+1}\text.
\end{align*}
Hence, we need to show that $h(\Normk P) \geq 0$, with a strict inequality if $l = 1$. 
With $\tilde{\sectwor} := \sectwor - L +1\in (0, 1]$ and
\[h_1(x) := x^{n\tilde{\sectwor} + n + 1} - x^{n\tilde{u}} + x^{(n+1)\tilde \sectwor} - x^{(n+1)\tilde{\sectwor} + n} + x^n - x^{n+1}\text,\]
we have $h(x) = x^{n(L-1)} h_1(x)$. If $\tilde{\sectwor} = 1$ then $h_1(x) \equiv 0$. We observe that $\tilde{\sectwor} = 1$ is impossible if $l=1$, since $\sectwor < l$. Let us assume that $0 < \tilde{\sectwor} < 1$ and prove that, in this case, $h_1(x) > 0$ for all $x > 1$.

The function $h_1(x)$ is in fact a polynomial in $x^{1/[\KL : \ks]}$. We have
\[n\tilde{\sectwor} + n + 1 > \begin{cases}(n+1)\tilde \sectwor + n\\ n+1\end{cases} > \begin{cases}(n+1)\tilde \sectwor\\ n\end{cases} > n\tilde{\sectwor}\text.\]
By Descartes' rule of signs, $h_1(x)$ has at most three positive zeros (with multiplicities). Since $h_1(1) = h_1'(1) = h_1''(1) = 0$ and $\lim_{x\to \infty}h_1(x) = \infty$, we have $h_1(x) > 0$ for $x > 1$.
\end{proof}

We can now easily finish the proof of Theorem \ref{gisatmostone}. After multiplying with a suitable element from $\ks^*$ we can assume that $\theta$ is an algebraic integer and choose $\alpha := 1$. From Proposition \ref{PropositionProductForm}, Lemmata \ref{lemmaVolumeEstimate} and \ref{lemmaConstantEstimate}, and the observation that
\[\NormK(\theta)^{\frac{n}{[\KL : \ks]}} =\prod_{P}\NormK((\theta \O_\KL)_P)^{\frac{n}{[\KL : \ks]}},\]
we immediately get that $g_\ks(\theta,n)\leq 1$. Now
$g_\ks(\theta,n)=1$ holds if and only if we have equality in Lemmata
\ref{lemmaVolumeEstimate} and \ref{lemmaConstantEstimate}. This is the
case if and only if $\theta \O_\KL=\up \down \theta \O_\KL$ and for
each Archimedean place $v$ of $\ks$ the $|\theta|_w$ for $w\mid v$ are
all equal. The condition for equality in Theorem \ref{gisatmostone} is
just a uniform reformulation of these two statements.

\section{Preliminaries on lattices}\label{section4}
In this section we establish a basic counting result for lattice points, which will be used
in the proofs of Theorem \ref{generalthm} and Theorem \ref{Thmvartheta}. 

For a vector $\vx$ in $\IR^\Da$ we write $|\vx|$ for the Euclidean length of $\vx$.
For a lattice $\Lambda$ in $\IR^\Da$ we write $\lambda_i=\lambda_i(\Lambda)$ ($1\leq i\leq \Da$)
for the successive minima of $\Lambda$ with respect to the Euclidean distance. 
\begin{definition}
Let $\M$ and $\Da>1$ be positive integers and let $L$ be a non-negative real.
We say that a set $S$ is in Lip$(\Da,\M,L)$ if 
$S$ is a subset of $\IR^\Da$, and 
if there are $\M$ maps 
$\phi_1,\ldots,\phi_M:[0,1]^{\Da-1}\longrightarrow \IR^\Da$
satisfying a Lipschitz condition
\begin{alignat}3
\label{lipcond1}
|\phi_i(\vx)-\phi_i(\vy)|\leq L|\vx-\vy| \text{ for } \vx,\vy \in [0,1]^{\Da-1}, i=1,\ldots,M\text, 
\end{alignat}
such that $S$ is covered by the images
of the maps $\phi_i$.
\end{definition}
We can now state and prove our counting result.
\begin{lemma}\label{Lemmacountinglatticepts}
Let $\Da>1$ be an integer, let $\Lambda$ be a lattice in $\IR^\Da$
with successive minima $\lambda_1,\ldots,\lambda_\Da$, and let
$a\in \{1,\ldots, \Da\}$.
Let $S$ be a bounded set in $\IR^\Da$ such that
the boundary $\partial S$ of $S$ is in Lip$(\Da,M,L)$, $S$ is contained in the zero-centered ball of radius $L$, and $\vnull \notin S$.
Then $S$ is measurable and we have
\begin{alignat*}3
\left||S\cap\Lambda|-\frac{\Vol S}{\det \Lambda}\right| \leq c_1(\Da)M
\max\left\{\frac{L^{a-1}}{{\lambda_1}^{a-1}},\frac{L^{\Da-1}}{{\lambda_1}^{a-1}{\lambda_a}^{\Da-a}}\right\}.
\end{alignat*}
The constant $c_1(\Da)$ depends only on $\Da$.
\end{lemma}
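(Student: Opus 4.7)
The plan is to reduce the bound to a standard lattice-point counting estimate for Lipschitz-parametrized bodies and then bound the resulting expression — a maximum over partial products of the successive minima — by the specific two-term quantity appearing on the right-hand side. The hypothesis $\vnull\notin S$ plays a role only in the extreme case where $L$ is smaller than the shortest non-zero vector of $\Lambda$.

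First, I would invoke the lattice-point counting result of Widmer (\cite{art1}, Theorem 5.4 or the analogous lemma), which under the stated Lipschitz, boundedness and size hypotheses gives
\begin{alignat*}1
\left||S\cap\Lambda|-\frac{\Vol S}{\det\Lambda}\right|\leq c(\Da) M \max_{0\leq j\leq \Da-1}\frac{L^{j}}{\lambda_1\cdots\lambda_j},
\end{alignat*}
with the usual convention that the empty product (at $j=0$) equals $1$. The idea behind this bound is to tile $\IR^{\Da}$ with translates of a fundamental parallelepiped built from a reduced basis $\mathbf{b}_1,\ldots,\mathbf{b}_{\Da}$ of $\Lambda$ with $|\mathbf{b}_i|\asymp\lambda_i$, and to estimate the number of translates that meet $\partial S$ by combining the Lipschitz covering of $\partial S$ with successive projections onto the spans of $\mathbf{b}_1,\ldots,\mathbf{b}_j$.

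Next, I would bound this maximum by the right-hand side of the lemma via case analysis on $L$. If $L\geq\lambda_a$, I split the range of $j$ into $j\geq a$ and $j\leq a-1$. For $j\geq a$ one uses $\lambda_1\cdots\lambda_j\geq\lambda_1^{a-1}\lambda_a^{j-a+1}$ together with $L\geq\lambda_a$ and $j\leq \Da-1$ to push the bound up to $L^{\Da-1}/(\lambda_1^{a-1}\lambda_a^{\Da-a})$; for $j\leq a-1$ one writes $\Da-1-j=(a-1-j)+(\Da-a)$ and uses $L\geq\lambda_1$ and $L\geq\lambda_a$ on the two respective factors. If $\lambda_1\leq L<\lambda_a$, then $L/\lambda_i<1$ for $i\geq a$, so the max is attained at some $j\leq a-1$ and is bounded by $L^{a-1}/\lambda_1^{a-1}$ via the trivial inequality $\lambda_i\geq\lambda_1$.

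The main obstacle is the boundary case $L<\lambda_1$, where the first term in the maximum of the standard estimate is simply $1$ and must be absorbed by $L^{a-1}/\lambda_1^{a-1}$, a quantity that is strictly less than one. Here is where the hypothesis $\vnull\notin S$ enters: since $S$ is contained in the open ball of radius $L<\lambda_1$ about the origin, it contains no non-zero lattice point, and $\vnull\notin S$ forces $|S\cap\Lambda|=0$. The volume term is controlled via $\Vol S\leq v_{\Da}L^{\Da}$ and Minkowski's second theorem $\det\Lambda\geq c_{\Da}\lambda_1\cdots\lambda_{\Da}$, which together yield
\begin{alignat*}1
\frac{\Vol S}{\det\Lambda}\leq c_{\Da}\Bigl(\frac{L}{\lambda_1}\Bigr)^{\Da}\leq c_{\Da}\Bigl(\frac{L}{\lambda_1}\Bigr)^{a-1}=c_{\Da}\frac{L^{a-1}}{\lambda_1^{a-1}},
\end{alignat*}
using $L<\lambda_1$ and $\Da\geq a$. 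Combining the three cases produces the stated estimate with a constant $c_1(\Da)$ depending only on $\Da$.
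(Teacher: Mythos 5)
Your proposal is correct and follows essentially the same approach as the paper: invoke Theorem 5.4 of \cite{art1} to get the bound $c(\Da)M\max_{0\leq j\leq\Da-1}L^j/(\lambda_1\cdots\lambda_j)$, then split on the size of $L$ relative to the successive minima, using $\vnull\notin S$ and Minkowski's second theorem to handle the case $L<\lambda_1$. The only cosmetic difference is that the paper treats the regime $L\geq\lambda_1$ in a single computation while you subdivide it at $\lambda_a$; the underlying estimates are identical.
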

\begin{proof}
Applying \cite[Theorem 5.4]{art1} proves measurability and gives
\begin{alignat*}3
\left||S\cap\Lambda|-\frac{\Vol S}{\det \Lambda}\right| \leq c_1(\Da)M
\max_{0\leq i\leq\Da-1}\frac{L^i}{\lambda_1\cdots\lambda_i}.
\end{alignat*}
First we assume $L/\lambda_1\geq 1$. 

Then we conclude
\begin{alignat*}3
\max_{0\leq i\leq\Da-1}\frac{L^i}{\lambda_1\cdots\lambda_i}
&\leq \max_{0\leq i\leq\Da-a}\frac{L^{a-1}}{\lambda_1^{a-1}}\left(\frac{L}{\lambda_a}\right)^i\\
&= \frac{L^{a-1}}{\lambda_1^{a-1}}\max\left\{1,\frac{L^{\Da-a}}{\lambda_a^{\Da-a}}\right\}\\
&= \max\left\{\frac{L^{a-1}}{{\lambda_1}^{a-1}},\frac{L^{\Da-1}}{{\lambda_1}^{a-1}{\lambda_a}^{\Da-a}}\right\}\text.
\end{alignat*}
Next we assume $L/\lambda_1< 1$. Then we have $|S\cap\Lambda|=0$. Moreover, by Minkowski's second theorem,
\begin{alignat*}3
\frac{\Vol S}{\det \Lambda}\leq c_1(\Da)\frac{L^\Da}{\lambda_1\cdots \lambda_{\Da}}.
\end{alignat*}
Furthermore,
\begin{alignat*}3
\frac{L^\Da}{\lambda_1\cdots \lambda_{\Da}}\leq\frac{L^\Da}{\lambda_1\cdots \lambda_{\Da}}\frac{\lambda_1}{L}
=\frac{L^{\Da-1}}{\lambda_2\cdots \lambda_{\Da}}
\leq \max\left\{\frac{L^{a-1}}{{\lambda_1}^{a-1}},\frac{L^{\Da-1}}{{\lambda_1}^{a-1}{\lambda_a}^{\Da-a}}\right\}.
\end{alignat*}
 \end{proof}
 We recall the following lemma, which is a special case of \cite[Lemma
 VIII.1]{18}.
\begin{lemma}\label{linearlyindependentminima}
 Let $\Lambda$ be a lattice in $\R^\Da$. Then there exist linearly independent vectors $v_1$, $\ldots$, $v_\Da$  in $\Lambda$ such that $|v_i| = \lambda_i$ for $1 \leq i \leq \Da$.
\end{lemma}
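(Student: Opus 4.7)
The plan is to construct the vectors $v_1,\dots,v_\Da$ inductively, exploiting the discreteness of $\Lambda$, which ensures that any Euclidean ball $B(\vNull,r)\subset\IR^\Da$ contains only finitely many points of $\Lambda$.

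The preliminary step is to show that for each $i$, the infimum in the definition of $\lambda_i$ is \emph{attained}: there exist $i$ linearly independent lattice vectors $w_1,\dots,w_i\in\Lambda$ with $|w_j|\leq\lambda_i$ for $1\leq j\leq i$. To see this, pick a sequence $r_n\downarrow\lambda_i$; for each $n$ the ball $B(\vNull,r_n)$ contains $i$ linearly independent lattice vectors, and all of them lie in the fixed ball $B(\vNull,\lambda_i+1)$. Since the latter contains only finitely many points of $\Lambda$, a pigeonhole argument produces a single $i$-tuple of linearly independent lattice vectors that works for infinitely many $n$, and these vectors then lie in $B(\vNull,\lambda_i)$.

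With the preliminary step in hand, I would construct the $v_i$ recursively. Take $v_1$ to be any nonzero lattice vector of minimal Euclidean length, so that $|v_1|=\lambda_1$ (this exists by discreteness). Now suppose $v_1,\dots,v_{i-1}\in\Lambda$ have been chosen with $|v_j|=\lambda_j$ and linearly independent. By the preliminary step, there exist $i$ linearly independent lattice vectors $w_1,\dots,w_i$ of norm at most $\lambda_i$. Since $v_1,\dots,v_{i-1}$ span only an $(i{-}1)$-dimensional subspace, at least one $w_k$ lies outside that span, so that $v_1,\dots,v_{i-1},w_k$ are linearly independent. This in particular yields $i$ linearly independent lattice vectors of norm at most $\max\{\lambda_{i-1},|w_k|\}\leq\lambda_i$, so by definition of $\lambda_i$ we must have $|w_k|\geq\lambda_i$. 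Combined with $|w_k|\leq\lambda_i$, we get $|w_k|=\lambda_i$, and setting $v_i:=w_k$ completes the inductive step.

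The argument is entirely routine; the only point that requires genuine care is the attainment of the infimum in the preliminary step, which is where discreteness of $\Lambda$ enters in an essential way. Since the statement is a classical fact from the geometry of numbers, I expect the proof to be a short invocation of these standard ideas with no serious obstacle.
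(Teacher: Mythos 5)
The paper itself does not prove this lemma; it simply cites it as a special case of Cassels's Lemma VIII.1, so there is no in-paper proof to compare against. Your overall strategy — first showing attainment of the infimum via discreteness and a pigeonhole argument, then constructing $v_1,\ldots,v_\Da$ recursively by picking a $w_k$ outside $\operatorname{span}(v_1,\ldots,v_{i-1})$ — is standard and sound, and your preliminary step is correct.

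However, the final deduction in your inductive step has a gap. From the fact that $v_1,\ldots,v_{i-1},w_k$ are $i$ linearly independent lattice vectors of norm at most $\max\{\lambda_{i-1},|w_k|\}\leq\lambda_i$, the definition of $\lambda_i$ gives only $\lambda_i\leq\max\{\lambda_{i-1},|w_k|\}$. If $\lambda_{i-1}<\lambda_i$ this forces $|w_k|\geq\lambda_i$ as you claim, but if $\lambda_{i-1}=\lambda_i$ (which happens for typical lattices, e.g.\ $\Z^\Da$ where all $\lambda_j=1$) the inequality $\lambda_i\leq\max\{\lambda_{i-1},|w_k|\}$ is satisfied vacuously and yields no lower bound on $|w_k|$. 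The conclusion $|w_k|=\lambda_i$ is nonetheless true, but needs a finer argument: suppose $|w_k|<\lambda_i$; since $w_k\neq\vnull$ we have $|w_k|\geq\lambda_1$, so there is a largest index $j\leq i-1$ with $\lambda_j\leq|w_k|$, and then $\lambda_{j+1}>|w_k|$. The $j+1$ vectors $v_1,\ldots,v_j,w_k$ are linearly independent (as $w_k\notin\operatorname{span}(v_1,\ldots,v_{i-1})\supseteq\operatorname{span}(v_1,\ldots,v_j)$) and all have norm $\leq|w_k|<\lambda_{j+1}$, contradicting the definition of $\lambda_{j+1}$. Inserting this argument closes the gap; the rest of your proof stands.
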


\begin{lemma}\label{shortbasis}
 Let $\Lambda$ be a lattice in $\R^\Da$. Then there exists a basis $u_1$, $\ldots$, $u_\Da$ of $\Lambda$ such that
 \begin{equation*}
      |u_i| \leq C_0(\Da) \lambda_1^{-\Da+1}\det\Lambda\quad\text{ for $1\leq i\leq \Da$, }
 \end{equation*}
where $C_0(\Da)$ is an explicit constant depending only on $\Da$.
\end{lemma}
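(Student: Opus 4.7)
The plan is to combine a classical result on the existence of a short lattice basis with Minkowski's second theorem.

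First, I would invoke the well-known fact (due in essence to Mahler, see e.g.\ Cassels' \emph{Introduction to the Geometry of Numbers}) that every lattice $\Lambda$ in $\R^\Da$ admits a basis $u_1,\ldots,u_\Da$ satisfying $|u_i| \leq C_1(\Da)\, \lambda_\Da(\Lambda)$ for all $1 \leq i \leq \Da$, where $C_1(\Da)$ depends only on $\Da$. This is strictly stronger than Lemma \ref{linearlyindependentminima}, which provides only linearly independent vectors of the prescribed lengths and not necessarily a $\Z$-basis of $\Lambda$. A standard way to obtain such a basis is to start with the vectors $v_1,\ldots,v_\Da$ from Lemma \ref{linearlyindependentminima} and then apply a finite iterative procedure (Minkowski reduction, or Smith-normal-form-type adjustments) to replace them by a basis of $\Lambda$, at the cost of lengthening each vector by at most a factor depending on $\Da$.

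Second, I would apply Minkowski's second theorem, which yields
\[
\lambda_1(\Lambda) \lambda_2(\Lambda) \cdots \lambda_\Da(\Lambda) \leq C_2(\Da)\, \det\Lambda
\]
for an explicit constant $C_2(\Da)$ (for instance, $C_2(\Da) = 2^\Da / V_\Da$ with $V_\Da$ the volume of the unit ball). Since $\lambda_1 \leq \lambda_2 \leq \cdots \leq \lambda_{\Da-1}$, this bound immediately gives
\[
\lambda_\Da(\Lambda) \leq \frac{C_2(\Da)\, \det\Lambda}{\lambda_1(\Lambda) \cdots \lambda_{\Da-1}(\Lambda)} \leq \frac{C_2(\Da)\, \det\Lambda}{\lambda_1(\Lambda)^{\Da-1}}.
\]

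Combining the two steps with $C_0(\Da) := C_1(\Da)\, C_2(\Da)$ yields the desired inequality $|u_i| \leq C_0(\Da)\, \lambda_1^{-\Da+1} \det\Lambda$.

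The principal obstacle is the first step: upgrading linearly independent short vectors to a genuine basis without losing control of the lengths. The modifications must be carried out carefully, but the whole argument is classical; no new geometric-of-numbers input is needed, and the constant $C_0(\Da)$ can in principle be made completely explicit by tracking the constants in a Minkowski-reduction procedure.
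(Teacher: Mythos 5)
Your proposal is correct and takes essentially the same route as the paper: convert short linearly independent vectors into a short basis (the paper cites a lemma of Mahler and Weyl, Cassels \cite{18} Lemma~8, p.~135, giving $|u_i|\le\max\{1,\Da/2\}\lambda_i$), then bound $\lambda_\Da$ via Minkowski's second theorem using $\lambda_1\le\lambda_j$. The paper phrases the final step slightly differently (writing $|u_i|\le|u_1|\cdots|u_\Da|/\lambda_1^{\Da-1}$ and then applying Minkowski), but this is the same calculation you carry out.
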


\begin{proof}
Let $v_1$, $\ldots$, $v_\Da$ be linearly independent vectors as in Lemma \ref{linearlyindependentminima}. 
By a lemma of Mahler and Weyl (see \cite[Lemma 8, p. 135]{18}) we obtain a basis $u_1$, $\ldots$, $u_\Da$ of 
$\Lambda$ such that $|u_i| \leq \max\{1,\Da/2\}\lambda_i$. Observing that $|u_i| \leq |u_1| \cdots |u_\Da| / \lambda_1^{\Da-1}$, 
the lemma follows from Minkowski's second theorem.
\end{proof}

The following result will be used only for the proof of Theorem \ref{Thmvartheta} in Section \ref{section9}.
\begin{lemma}\label{latticeminimaestimates}
Let $\Lambda_1$ and $\Lambda_2$ be lattices in $\R^d$, and consider the lattice 
$\Lambda:=\Lambda_1\times\Lambda_2$ in $\R^{2d}$.
Then we have 
\begin{alignat*}1
\lambda_1(\Lambda)&=\min\{\lambda_1(\Lambda_1),\lambda_1(\Lambda_2)\},\\
\lambda_{d+1}(\Lambda)&\geq\max\{\lambda_1(\Lambda_1),\lambda_1(\Lambda_2)\}.
\end{alignat*}
\end{lemma}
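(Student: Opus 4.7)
The plan is to prove the two claims separately, as they are essentially independent and each follows from elementary reasoning about the direct sum structure of $\Lambda = \Lambda_1 \times \Lambda_2$.

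For the first identity $\lambda_1(\Lambda)=\min\{\lambda_1(\Lambda_1),\lambda_1(\Lambda_2)\}$, I would split into two inequalities. For the upper bound, if $v_i \in \Lambda_i$ realizes $\lambda_1(\Lambda_i)$, then $(v_1,0)$ and $(0,v_2)$ are nonzero elements of $\Lambda$ of lengths $\lambda_1(\Lambda_1)$ and $\lambda_1(\Lambda_2)$ respectively, so $\lambda_1(\Lambda)\leq \min\{\lambda_1(\Lambda_1),\lambda_1(\Lambda_2)\}$. For the lower bound, any nonzero $(v_1,v_2)\in\Lambda$ has at least one nonzero component; if $v_j\neq 0$ then $|(v_1,v_2)|\geq |v_j|\geq \lambda_1(\Lambda_j)\geq \min\{\lambda_1(\Lambda_1),\lambda_1(\Lambda_2)\}$, and taking the infimum over such vectors gives the reverse inequality.

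For the second inequality I would argue by contradiction. Without loss of generality assume $\lambda_1(\Lambda_1)\geq \lambda_1(\Lambda_2)$, so that the goal becomes $\lambda_{d+1}(\Lambda)\geq \lambda_1(\Lambda_1)$. If this fails, then by the definition of successive minima (or equivalently by invoking Lemma \ref{linearlyindependentminima}) there exist $d+1$ linearly independent vectors $w_1,\ldots,w_{d+1}\in\Lambda$ of Euclidean length strictly less than $\lambda_1(\Lambda_1)$. Writing $w_i=(a_i,b_i)$ with $a_i\in\Lambda_1$ and $b_i\in\Lambda_2$, the bound $|a_i|\leq |w_i|<\lambda_1(\Lambda_1)$ forces $a_i=\vNull$ for every $i$, since any nonzero element of $\Lambda_1$ has length at least $\lambda_1(\Lambda_1)$. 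But then all $w_i$ lie in the $d$-dimensional subspace $\{\vNull\}\times\R^d$, contradicting the fact that they are $d+1$ linearly independent vectors.

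There is no real obstacle here: both statements reduce to the observation that a vector of $\Lambda$ shorter than $\lambda_1(\Lambda_j)$ must have its $\Lambda_j$-component equal to zero, combined with a dimension count. The only mild subtlety is being careful with the strict versus non-strict inequality when invoking the definition of $\lambda_{d+1}$, which is handled by choosing an intermediate value $r$ with $\lambda_{d+1}(\Lambda)\leq r<\lambda_1(\Lambda_1)$ in the contradiction step.
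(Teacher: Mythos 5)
Your proof is correct and uses essentially the same approach as the paper: both rely on Lemma \ref{linearlyindependentminima} together with the dimension count that $d+1$ linearly independent vectors of $\Lambda$ cannot all lie in the $d$-dimensional subspace $\{\vnull\}\times\R^d$ (resp.\ $\R^d\times\{\vnull\}$). The paper phrases this directly and symmetrically, obtaining $\lambda_{d+1}(\Lambda)\geq\lambda_1(\Lambda_1)$ and $\lambda_{d+1}(\Lambda)\geq\lambda_1(\Lambda_2)$ simultaneously, while you reduce to one side by WLOG and argue by contradiction; this is only a stylistic difference.
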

\begin{proof}
The first assertion is obvious. For the second assertion we choose, by Lemma \ref{linearlyindependentminima}, $d+1$ linearly independent elements $v_j=(w_j^{(1)},w_j^{(2)})\in \Lambda$ ($1\leq j\leq d+1$)
with $|v_j|=\lambda_j$. Clearly, not all of them can lie in $\R^d\times \{\vnull\}$, and similarly
not all of them can lie in $\{\vnull\} \times\R^d$. Suppose $v_{j_1}\notin \R^d\times \{\vnull\}$
and $v_{j_2}\notin \{\vnull\} \times\R^d$. Hence $|v_{j_1}|\geq |w_{j_1}^{(2)}|\geq \lambda_1(\Lambda_2)$ and
$|v_{j_2}|\geq |w_{j_2}^{(1)}|\geq \lambda_1(\Lambda_1)$. This proves the lemma. 
\end{proof}

\section{Adelic Lipschitz heights}\label{section5}
In \cite{1} Masser and Vaaler have introduced what one may call Lipschitz heights on $\P^n(\KL)$.
This notion generalizes  the absolute Weil height and allows so-called Lipschitz distance functions instead of just the maximum norm 
at the Archimedean places. Nonetheless, this notion is sometimes too rigid, as one often also needs modification at a finite number 
of non-Archimedean places. This leads naturally to the concept of adelic Lipschitz heights, introduced in \cite{art1}.

\subsection{Adelic Lipschitz systems on a number field}\label{subsecdefALS}
Let $\KL$ be a number field and
recall that $M_\KL$ denotes the set of places of $\KL$, and that for every place $w$ we have fixed a
completion $\KL_w$ of $\KL$ at $w$. We write $d_w=[\KL_w:\Q_w]$ for the local degree, where 
$\Q_w$ denotes the completion of $\Q$ with respect to the unique place of $\Q$ that extends to $w$.
The value set of $w$, $\Gamma_w:=\{|\alpha|_w:\alpha \in \KL_w\}$
is equal to $[0,\infty)$ if $w$ is Archimedean,
and to
\begin{alignat*}3
\{0,(\NormK\il P_w)^{0},(\NormK\il P_w)^{\pm 1/d_w},(\NormK\il P_w)^{\pm 2/d_w},\ldots\}
\end{alignat*}
(topologized by the trivial topology) if $w$ is a non-Archimedean
place corresponding to the prime ideal $\il P_w$ of $\O_\KL$.  For $w
\mid \infty$ we identify $\KL_w$ with $\IR$ or $\IC$, respectively,
and we identify $\IC$ with $\IR^2$.
\begin{definition}\label{defALS}
An adelic Lipschitz system $\en$ on $\KL$ (of dimension $n$) is
a set of continuous maps
\begin{alignat}3
\label{Abb1}
N_w: \KL_w^{n+1}\rightarrow \Gamma_w \quad w \in M_\KL
\end{alignat}
such that for $w \in M_\KL$ we have
\begin{alignat*}3
(i)&\text{ } N_w({\vz })=0 \text{ if and only if } {\vz} ={\vnull},\\
(ii)&\text{ } N_w(a {\vz})=|a|_w N_w({\vz}) \text{ for all
$a\in\KL_w$ and all ${\vz}\in\KL_w^{n+1}$},\\
(iii)&\text{ if $w \mid \infty$: }\{{\vz}:N_w({\vz})=1\} \text{
is in $Lip(d_w(n+1),\M_w,L_w)$ for some $\M_w, L_w$},\\
(iv)&\text{ if $w \nmid \infty$: }N_w({\vz_1}+{\vz_2})
\leq \max\{N_w({\vz}_1),N_w({\vz}_2)\} \text{ for all 
${\vz}_1$, ${\vz}_2\in\KL_w^{n+1}$}.
\end{alignat*}
Moreover, we assume that the equality of functions
\begin{alignat}3
\label{Nvmaxnorm}
N_w(\vz)=\max\{|z_0|_w,\ldots,|z_n|_w\}
\end{alignat}
holds for all but a finite number
of $w \in M_\KL$.
\end{definition}

If we consider only the functions $N_w$ for $w\mid\infty$
then we get a Lipschitz system (of dimension $n$)
in the sense of Masser and Vaaler \cite{1}.

For all $w\in M_\KL$ there are  $c_w\leq 1$ in the value group
$\Gamma_w^*=\Gamma_w\backslash\{0\}$ with
\begin{alignat}3\label{normequivalence}
c_w\max\{|z_0|_w,\ldots,|z_n|_w\}\leq N_w({\vz})\leq c_w^{-1}\max\{|z_0|_w,\ldots,|z_n|_w\}
\end{alignat}
for all $\vz=(z_0,\ldots,z_n)$ in $\KL_w^{n+1}$.
Due to (\ref{Nvmaxnorm}) we can and will assume that 
\begin{alignat}3\label{cw=1}
c_w=1 
\end{alignat}
for all but a finite number of places $w$. 
We define
\begin{alignat}3
\label{defcfin}
C^{fin}_{\en}&:=\prod_{w\nmid \infty}c_w^{-\frac{d_w}{[\KL:\Q]}}\geq 1,
\end{alignat}
and
\begin{alignat}3
\label{defcinf}
C^{inf}_{\en}&:=\max_{w\mid \infty}\{c_w^{-1}\}\geq 1.
\end{alignat}
For a prime ideal ${\il P}$ of $\O_\KL$ we write $v_{\il P}$ for the
corresponding valuation on $\KL$, normalized by $v_{\il P}(\KL^*)=\Z$.
For a nonzero fractional ideal $\A$ of $\KL$ and a non-Archimedean
place $w$ of $\KL$, associated to the prime $\il P$, we define
\begin{alignat*}1
|\A|_w:=\NormK(\il P)^{-v_{\il P}(\A)/d_w}\text,
\end{alignat*}
so that $|\alpha|_w = |\alpha\O_\KL|_w$ for $\alpha \in \KL^*$. For $w\in M_\KL$
let $\sigma_w$ be the canonical embedding of $\KL$ in $\KL_w$,
extended component-wise to $\KL^{n+1}$. 
For any nonzero $\bbalf \in \KL^{n+1}$, let
$\iN(\bbalf)$ be the unique fractional ideal of $\KL$ defined by 
\begin{alignat*}1
N_w(\sigma_w\bbalf) = |\iN(\bbalf)|_w 
\end{alignat*}
for all non-Archimedean $w \in M_\KL$,
and we set by convention  $\iN({\bf 0}):=\{0\}$.

Moreover, set
\begin{alignat*}1
\O_\KL(\bbalf) := \balpha_0 \O_\KL + \cdots + \balpha_n \O_\KL,
\end{alignat*}
so that  $\O_\KL(\bbalf)$ is simply  $\iN(\bbalf)$ for any adelic Lipschitz system with (\ref{Nvmaxnorm}) for all finite  places.
Now by (\ref{normequivalence}) we get 
\begin{alignat}1\label{normequivalence2}
c_w\max\{|\balpha_0|_w,\ldots,|\balpha_n|_w\}\leq |\iN(\bbalf)|_w \leq c_w^{-1}\max\{|\balpha_0|_w,\ldots,|\balpha_n|_w\}.
\end{alignat}
Recall that $c_w=1$ up to finitely many exceptions and let
\begin{equation*}
F_\en := \{ \il A : \text{$\il A$ nonzero fractional ideal of $\KL$ and } c_w \leq |\il A|_w \leq c_w^{-1} \text{ for all }w\nmid\infty\}\text. 
\end{equation*}
By unique factorization of fractional ideals, $F_\en$ is finite. Moreover, for any $\bbalf\in \KL^{n+1}$, we have
\begin{equation}\label{idealcomparing}
\iN(\bbalf)=\O_\KL(\bbalf)\F(\bbalf)
\end{equation}
for some $\F(\bbalf)\in F_\en$. 
Taking the product in (\ref{normequivalence2}) over all finite places with multiplicities $d_w$ shows that 
\begin{alignat}1\label{normestimate}
{C_\en^{fin}}^{-[\KL:\Q]} \NormK\O_\KL(\bbalf)\leq \NormK\iN(\bbalf)\leq {C_\en^{fin}}^{[\KL:\Q]}  \NormK\O_\KL(\bbalf).
\end{alignat}

\subsection{Adelic Lipschitz heights on $\mathbb{P}^n(\KL)$}
Let $\en$ be an adelic Lipschitz system on $\KL$
of dimension $n$. 
Then the height $\hen$ 
on $\KL^{n+1}$ is defined by
\begin{alignat*}3
\hen(\bbalf):=\prod_{w \in M_\KL} N_w(\sigma_w(\bbalf))^{\frac{d_w}{[\KL:\Q]}}.
\end{alignat*}
Thanks to the product formula
and $(ii)$ from Definition \ref{defALS} $\hen(\bbalf)$ is invariant under scalar multiplication by elements of $\KL^*$.
Therefore $\hen$ is well-defined on $\IP^n(\KL)$ by setting
\begin{alignat*}3
\hen(P):=\hen(\bbalf),
\end{alignat*}
where $P=(\balpha_0:\cdots:\balpha_n) \in \IP^n(\KL)$ and $\bbalf=(\balpha_0,\ldots,\balpha_n) \in \KL^{n+1}$.
We note that by (\ref{normequivalence}), (\ref{defcfin}) and (\ref{defcinf}) we have 
\begin{alignat}1\label{heightinequality}
(C_\en^{fin}C_\en^{inf})^{-1}H(P)\leq\Hen(P)\leq C_\en^{fin}C_\en^{inf}H(P), 
\end{alignat}
where $H(P)$ denotes the projective absolute multiplicative Weil height of $P$.
Hence, by Northcott's theorem, $\{P\in \IP^n(\KL): \hen(P)\leq X\}$ is a finite set for
each $X$ in $[0,\infty)$.

\section{The general theorem}\label{section6}
Let $\ks \subseteq \KL$ be number fields and let $\en$ be an adelic Lipschitz system of dimension $n$ on $\KL$.
Recall that the functions $N_w$, $n$, and $\KL$ are all part of the data of $\en$. 
From $\en$ we obtain an adelic Lipschitz height $\Hen$ on $\P^n(\KL)$. Our goal in this section is to derive an asymptotic formula for the counting function
\[N_\en(\P^n(\ks),X) := |\{P \in \mathbb P^n(\ks): \Hen(P) \leq X\}|\text.\]
Let us set some necessary notation first.
For each Archimedean place $v$ of $\ks$ we define a function $N_v$ on $\ks_v^{n+1}$ by
\begin{alignat}3\label{N_v}
N_v(\vz):=\prod_{w\mid v}N_w(\vz)^{\frac{d_w}{d_v[\KL:\ks]}}.
\end{alignat}
Let $\en'=\en'(\en,\ks)$ be the collection of functions $N_v$, where $N_v$ is as in (\ref{N_v}) if $v$ is an Archimedean place of $\ks$ and 
\[N_v(\vz) := \max\{|z_0|_v, \ldots, |z_n|_v\}\]
if $v$ is a non-Archimedean
place of $\ks$. From now on we assume that $\en'$ is an adelic Lipschitz system (of dimension $n$) on $\ks$
(the conditions $(i), (ii)$ and $(iv)$ are automatically satisfied but $(iii)$ may possibly fail).
Hence there exists a positive integer $M_{\en'}$ and a positive real number $L_{\en'}$ such that
the sets defined by  $N_v(\vz)=1$ lie in Lip$(d_v(n+1),M_{\en'},L_{\en'})$ for all Archimedean places $v$ of $\ks$.
The sets defined by $N_v(\vz)<1$ are measurable and have a finite, positive volume, which we denote by $V_v$, and set
\begin{alignat}3\label{Ven'}
V_{\en'}:=\prod_{v\mid \infty} V_v.
\end{alignat}
We denote by $\sigma_1,\ldots,\sigma_d$ 
the embeddings from $\ks$ to $\IR$ or
$\IC$ respectively, ordered such that
$\sigma_{r+s+i}=\overline{\sigma}_{r+i}$ for 
$1\leq i \leq s$.
We define
\begin{alignat}3
\label{sigd}
&\sigma:\ks\longrightarrow \IR^r\times\IC^s = \R^d\\
\nonumber&\sigma(\balpha)=(\sigma_1(\balpha),\ldots,\sigma_{r+s}(\balpha))
\end{alignat}
and extend $\sigma$ component-wise to get a map
\begin{alignat}3
\label{sigD}
\sigma:\ks^{n+1}\longrightarrow \IR^{\Da},
\end{alignat}
where $\Da=d(n+1)$.

For nonzero fractional ideals $C$ of $\ks$, and $\il{D}$ of $\KL$, we define the following subsets of $\R^\Da$:
\begin{align*}
\Lambda^*_C(\il D) :&= \{\sigma(\bbalf): \bbalf \in \ks^{n+1}\text{, }\O_\ks(\bbalf) = C\text{, }\iN(\bbalf) = \il{D}\}\text,\\
\Lambda_C(\il D) :&= \{\sigma(\bbalf): \bbalf \in \ks^{n+1}\text{, }\O_\ks(\bbalf) = C\text{, }\iN(\bbalf) \subseteq \il{D}\}\text,\\
\Lambda(\il D) :&= \{\sigma(\bbalf): \bbalf \in \ks^{n+1}\text{, }\iN(\bbalf) \subseteq \il{D}\}\text.
\end{align*}
Note that by (\ref{idealcomparing}) we have
\begin{alignat}1\label{SCestimate}
\il D\in \up C F_\en
\end{alignat}
whenever $\Lambda^*_C(\il D)$ is non-empty, where $\up C F_\en$ denotes the finite set of fractional ideals of the form $\up C \F$ with $\F\in F_\en$.

Let $\mathcal{R}$ be a set of integral representatives for the class group Cl$_\ks$. For any $C \in \mathcal{R}$, we choose a finite set $S_C$ of nonzero fractional ideals of $\KL$ such that
\begin{equation*}
\text{$S_C$ contains all $\il{D}$ with $\Lambda^*_C(\il D) \neq \emptyset$.}
\end{equation*}
Moreover, we choose a finite set $T$ in the following way. For any $\il D \in S_C$, let $T_{C, \il D}$ be the set of all nonzero ideals $\il A$ of $\O_\KL$ such that $\Lambda_C(\il A\il D) \neq \emptyset$. This set is finite, since, similar as above, we have
$\il A\il D \il E\in \up C F_\en$ for some ideal $\il E$ of $\O_\KL$ whenever  $\Lambda_C(\il A\il D) \neq \emptyset$. Then we choose $T$ to be any finite set of nonzero ideals of $\O_\KL$ such that
\begin{equation*}
\text{$T$ contains all the sets $T_{C, \il D}$ for $C \in \mathcal{R}$ and $\il D \in S_C$.}  
\end{equation*}
We define
\begin{equation}\label{general_formula}
g_\ks^\en := \sum_{C \in \mathcal{R}}\sum_{\il D \in S_C}\sum_{\il A \in T}\mu_\KL(\il A)\sum_{E \unlhd \O_\ks}\mu_\ks(E)\frac{\NormK\il D^{\frac{n+1}{[\KL : \ks]}}}
{\det \Lambda(\il A\il D, CE)}\text,
\end{equation}
where
\[\Lambda(\il A \il D, CE) = \Lambda(\il A\il D) \cap \sigma((CE)^{n+1})\text.\]
Note that the infinite sum in (\ref{general_formula}) taken over all nonzero ideals $E$ converges absolutely, as $\det \Lambda(\il A\il D, CE)\geq (2^{-s}\Normk CE)^{n+1}$. 
Although $g_\ks^\en$ seems to depend on the choice of $\mathcal{R}$, $S_C$ and $T$, we will see that this is actually not the case.
Of course, one could impose a minimality condition to render the choice of the sets $S_C$ and $T$ unique, but 
for the calculation of $g_\ks^\en$ it is convenient to have more flexibility for the choices of these sets. 
From Theorem \ref{generalthm}, (\ref{heightinequality}), and Schanuel's theorem it will follow that $g_\ks^\en>0$. 

Finally, we define 
\begin{alignat}1\label{Aen}
A_\en:=A_\en(\ks):=|F_\en|M_{\en'}^{r+s}((L_{\en'}+C_{\en'}^{inf})C_{\en}^{fin})^{d(n+1)-1}.
\end{alignat}
We can now state the theorem.
\begin{theorem}\label{generalthm}
Let $\ks\subseteq \KL$ be number fields, $d:=[\ks:\Q]$, let $\en$ be an adelic Lipschitz system (of dimension $n$) on $\KL$,  
and suppose that $\en'=\en'(\en,\ks)$ is an adelic Lipschitz system (of dimension $n$) on $\ks$. 
Then, as $X\geq 1$ tends to infinity,
we have
\begin{alignat*}1
N_\en(\P^n(\ks),X)=\omega_\ks^{-1}(n+1)^{r+s-1}R_\ks V_{\en'} g_\ks^\en X^{d(n+1)}+O(|T|A_\en X^{d(n+1)-1}\mathfrak{L}),
\end{alignat*}
where $\mathfrak{L}=1+\log(C_{\en'}^{inf}C_{\en}^{fin}X)$ if $(n,d)=(1,1)$ and $\mathfrak{L}=1$ otherwise.
The implicit constant in the $O$-term depends only on $\ks$ and on $n$.
\end{theorem}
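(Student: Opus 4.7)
The plan is to follow the classical Schanuel strategy, suitably adapted to the double-field situation and the Lipschitz structure of $\en$ at the finite places. Each $P\in\P^n(\ks)$ has representatives $\bbalf\in\ks^{n+1}\setminus\{\vNull\}$, unique up to $\ks^*$-scaling. First I would partition by the ideal class of $\O_\ks(\bbalf)$: after fixing $C\in\mathcal R$, the orbits are parameterized by $U_\ks$-equivalence classes of $\bbalf$ with $\O_\ks(\bbalf)=C$. A further refinement according to $\iN(\bbalf)$ is possible, and by (\ref{idealcomparing}) together with (\ref{SCestimate}) only the finitely many $\il D\in S_C$ can arise; the same reasoning confines the subsequent $\KL$-ideal Möbius variable to $T$.

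Next, I would convert the equalities $\O_\ks(\bbalf)=C$ and $\iN(\bbalf)=\il D$ into containment conditions via two Möbius inversions. On the $\KL$-side, replacing $\iN(\bbalf)=\il D$ by $\iN(\bbalf)\subseteq\il A\il D$ with coefficient $\mu_\KL(\il A)$ and $\il A\in T$; on the $\ks$-side, replacing $\O_\ks(\bbalf)=C$ by $\bbalf\in(CE)^{n+1}$ with coefficient $\mu_\ks(E)$ and $E\unlhd\O_\ks$. This reduces the count to $U_\ks$-classes of $\bbalf\in\ks^{n+1}$ with $\sigma(\bbalf)\in\Lambda(\il A\il D,CE)$ satisfying the Archimedean bound
\[
\prod_{v\mid\infty}N_v(\sigma_v(\bbalf))^{d_v/d}\leq Y:=X\cdot\NormK(\il A\il D)^{-1/[\KL:\Q]},
\]
where I have used that the non-Archimedean part of $\Hen$ is determined by $\iN(\bbalf)$ and is bounded above by $\NormK(\il A\il D)^{1/[\KL:\Q]}$ under the containment.

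Now I would perform the standard Schanuel parameterization of a fundamental domain for the $U_\ks$-action on $\sigma(\ks^{n+1})$: combine a fundamental domain for the torsion-free part of $U_\ks$ on the sum-zero hyperplane in $\R^{r+s}$ (contributing the factor $R_\ks$), the $\wk^{-1}$ factor from roots of unity, a scaling parameter $t\in(0,Y]$ for the total Archimedean size, and directional data in the unit sphere $\{N_{\en'}(\vz)=1\}$. This produces a region $Z(Y)\subseteq\R^\Da$ obtained as the $Y$-dilation of a fixed $Z(1)$, whose volume evaluates, by the usual logarithmic-coordinate Jacobian computation, to
\[
\Vol Z(Y)=Y^{\Da}\cdot\wk^{-1}(n+1)^{r+s-1}R_\ks V_{\en'}\cdot d.
\]
Crucially, since $\en'$ is an adelic Lipschitz system and the unit fundamental domain is parameterized by $(r+s-1)$ bounded coordinates, the boundary of $Z(1)$ lies in $\Lip(\Da, O(M_{\en'}^{r+s}), O((L_{\en'}+C_{\en'}^{inf})C_\en^{fin}))$, explaining the appearance of $A_\en$.

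Applying Lemma \ref{Lemmacountinglatticepts} to $\Lambda(\il A\il D,CE)\cap Z(Y)$, with the parameter $a=r+s$, yields the main term $\Vol Z(Y)/\det\Lambda(\il A\il D,CE)$ and an error controlled by successive minima bounded below via Minkowski in terms of $\Normk(CE)^{1/d}$. Substituting $Y^\Da=X^{d(n+1)}\NormK(\il A\il D)^{-(n+1)/[\KL:\ks]}$ and summing first over $E$ (the series in $\Normk E$ converges since $\det\Lambda(\il A\il D,CE)\gg\Normk(CE)^{n+1}$), then over $\il A\in T$, $\il D\in S_C$, and $C\in\mathcal R$, exactly reassembles the definition (\ref{general_formula}) of $g_\ks^\en$ and produces the stated main term. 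The error terms sum to $O(|T|A_\en X^{d(n+1)-1})$; the additional $\log$ factor in the case $(n,d)=(1,1)$ arises because, in that degenerate range, the $E$-sum of $X^{d(n+1)-1}/\Normk(CE)^{d(n+1)-1}=X/\Normk(CE)$ becomes a harmonic sum over $\Normk E\leq X^2$, which contributes a logarithmic factor. The main obstacle is the geometric step of verifying that $Z(1)$ has the claimed Lipschitz boundary, and that the error in Lemma \ref{Lemmacountinglatticepts} is uniform enough in $\il A,\il D,C,E$ to permit the unconditional $E$-summation; once these are in place, the remaining manipulations are algebraic bookkeeping.
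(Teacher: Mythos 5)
Your high-level plan matches the paper's proof of Theorem~\ref{generalthm} (Section~\ref{section7}): decompose over $C \in \mathcal{R}$ and $\il D \in S_C$, perform a double M\"obius inversion, parameterize a fundamental domain for the unit action via $\en'$, and apply Lemma~\ref{Lemmacountinglatticepts}. However, there is a genuine error in how you set up the Archimedean bound after the M\"obius inversion.

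Your region $Z(Y)$ is cut out by the condition $\prod_{v\mid\infty} N_v(\sigma_v\bbalf)^{d_v/d} \le Y$ with $Y := X\,\NormK(\il A\il D)^{-1/[\KL:\Q]}$. This is not what the inversion produces. The correct logic (Lemmata~\ref{mainlemma} and~\ref{NafterMI} of the paper) is: for $\bbalf$ with $\iN(\bbalf) = \il D$ \emph{exactly}, the non-Archimedean contribution to $\Hen$ equals $\NormK\il D^{-1/[\KL:\Q]}$, so $\Hen(P) \le X$ becomes $\sigma\bbalf \in S_F\!\left(X\,\NormK\il D^{1/[\KL:\Q]}\right)$; the exponent is $+1/[\KL:\Q]$, and the ideal is $\il D$. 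The M\"obius inversion that trades $\iN(\bbalf) = \il D$ for $\iN(\bbalf) \subseteq \il A\il D$ is applied \emph{inside} this counting problem, replacing the lattice $\Lambda_C^*(\il D)$ by $\Lambda_C(\il A\il D)$ while the region $S_F\!\left(X\,\NormK\il D^{1/[\KL:\Q]}\right)$ stays fixed; it is parameterized by the outer variable $\il D$, not by the M\"obius variable $\il A\il D$. If you instead let the Archimedean box shrink or grow with $\il A$, the resulting signed sum is not a M\"obius inversion of the original count: the cumulative function $|\Lambda_C(\il E) \cap S_F(X\NormK\il E^{1/[\KL:\Q]})|$ does \emph{not} decompose as $\sum_{\il B}|\Lambda_C^*(\il B\il E)\cap S_F(X\NormK(\il B\il E)^{1/[\KL:\Q]})|$, since the boxes are different. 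Consequently your main term would be $\sum \mu_\KL(\il A)\mu_\ks(E)\,\NormK(\il A\il D)^{-(n+1)/[\KL:\ks]}/\det\Lambda(\il A\il D,CE)$ (up to constants), which does not match the constant $g_\ks^\en$ in~\eqref{general_formula}, where the numerator is $\NormK\il D^{+(n+1)/[\KL:\ks]}$ with $\il D$ alone and positive exponent. Separately, your justification is also internally inconsistent: you say the non-Archimedean part is bounded above by $\NormK(\il A\il D)^{1/[\KL:\Q]}$ ``under the containment'', but that part equals $\NormK\iN(\bbalf)^{-1/[\KL:\Q]}$, which under $\iN(\bbalf)\subseteq\il A\il D$ is at most $\NormK(\il A\il D)^{-1/[\KL:\Q]}$; even with the correct sign, an upper bound on the non-Archimedean part does not translate into an upper bound on the Archimedean part for a fixed height threshold.

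A minor secondary issue: your volume formula $\Vol Z(Y) = Y^\Da\,\wk^{-1}(n+1)^{r+s-1}R_\ks V_{\en'}\cdot d$ carries a spurious factor $d$. The paper's computation (\ref{SFVol}) gives $\Vol S_F(\TE) = (n+1)^{r+s-1}R_\ks V_{\en'}\,\TE^\Da$ (the $\wk^{-1}$ appears in the count from having $\wk$ representatives per point, not in the volume). Also, the paper applies Lemma~\ref{Lemmacountinglatticepts} with $a = \Da$ here (see Lemma~\ref{appliedlatticeestimate}), not $a = r+s$; with your choice you would still need to bound $\lambda_{r+s}$ from below, which is not addressed.
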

The hypothesis of $\en'$ being an adelic Lipschitz system is a minor one. For instance, this hypothesis is certainly fulfilled when the functions $N_w$ of $\en$ are norms,
as we shall prove in the appendix (see Lemma \ref{en-norms}).

\section{Proof of Theorem \ref{generalthm}}\label{section7}
The proof of Theorem \ref{generalthm} makes frequent use of arguments from \cite{1} and \cite{art1} (some of which can be traced back to \cite{25}, 
or even to Dedekind and Weber).

Let $q:=r+s-1$, $\Sigma$ the hyperplane in $\IR^{q+1}$
defined by $x_1+\cdots+x_{q+1}=0$ and 
$\vdelta=(d_1,\ldots,d_{q+1})$ with 
$d_i=1$ for $1\leq i \leq r$ and $d_i=2$ 
for $r+1\leq i \leq r+s=q+1$.
The map $l(\eta):=(d_1\log|\sigma_1(\eta)|,\ldots,d_{q+1}\log|\sigma_{q+1}(\eta)|)$
sends $\ks^*$ to $\IR^{q+1}$. For $q>0$ the image of the unit group
$\O_\ks^*$ under $l$ is a lattice in $\Sigma$ with
determinant $\sqrt{q+1}R_\ks$.\\

We now define a set $S_F(\TE)$ using our adelic Lipschitz system $\en'$ on $\ks$. 
Let $F$ be a bounded set in $\Sigma$ and for $\TE>0$ let $F(\TE)$ be the vector sum
\begin{alignat}3
\label{vecsum1}
F(\TE):=F+\vdelta(-\infty,\log \TE].
\end{alignat}
We denote by $\exp$ the diagonal exponential map
from $\IR^{q+1}$ to $(0,\infty)^{q+1}$.
Any embedding $\sigma_i$ ($1\leq i\leq q+1$) corresponds to an Archimedean place $v$,
and thus gives rise to one of our Lipschitz distance functions $N_i:=N_v$ from $\en'$.
We use variables ${\bf z}_1,\ldots,{\bf z}_{q+1}$ with ${\bf z}_i$
in $\IR^{d_i(n+1)}$. Exactly as in \cite{1} we define $S_F(\TE)$ in $\IR^\Da$
for $\Da=\sum_{i=1}^{q+1}d_i(n+1)=d(n+1)$ as the set of  all
${\bf z}_1,\ldots,{\bf z}_{q+1}$ such that
\begin{alignat}3
\label{inkl1}
(N_1({\bf z}_1)^{d_1},\ldots,N_{q+1}({\bf z}_{q+1})^{d_{q+1}}) \in \exp(F(\TE)).
\end{alignat}
We note that 
\begin{alignat}3\label{0notinSF}
\vnull \notin S_F(\TE).
\end{alignat}
Using $(ii)$ from Definition \ref{defALS} it is easily seen that $S_F(\TE)$ is homogeneously expanding, i.e.,
\begin{alignat}3\label{homexp}
S_F(\TE)=\TE S_F(1).
\end{alignat}
Moreover, if $F$ lies in a zero-centered ball of radius $r_F$ then
\begin{alignat*}3
S_F(\TE)&\subseteq \{(\vz_1,\ldots,\vz_{q+1}): N_i(\vz_i)\leq \exp(r_F)\TE  \text{ for }1\leq i\leq q+1\}.
\end{alignat*}
The latter set lies in the the zero-centered ball of radius $\sqrt{\Da}C_{\en'}^{inf} \exp(r_F) \TE$, and thus
\begin{alignat}3\label{SFnormboundallg}
S_F(\TE)&\subseteq B_0(\sqrt{\Da}C_{\en'}^{inf} \exp(r_F) \TE).
\end{alignat}
Note that for $q=0$ we automatically have 
$F=\{0\}$, and our set $S_F(\TE)$ is precisely the set defined by $N_1(\vz)\leq \TE$.

We now specify our set $F$ when $q>0$. We choose a basis $u_1$, $\ldots$, $u_q$ of the lattice $l(O_\ks^*)$ as in Lemma \ref{shortbasis}. Set $F:=[0,1)u_1+\cdots+[0,1)u_q$. So $F$ is measurable of ($q$-dimensional) volume 
\begin{alignat}1\label{FVol}
\Vol(F)=\sqrt{q+1} R_\ks 
\end{alignat}
(and this remains true for $q=0$). 
From the argument in \cite{art1} following (8.2), we see that $\lambda_1(l(\O_\ks^*))\geq c_d$ for some positive constant $c_d$ depending only on $d$. With the estimate from Lemma \ref{shortbasis}, we get
\begin{alignat}1\label{uibound}
|u_i|\leq C_0(q)c_d^{-q+1}\Vol(F) \leq C_d R_\ks, \quad (1\leq i\leq q) 
\end{alignat}
for some positive constant
$C_d$ depending only on $d$. Note that $F$ lies in the zero centered ball of radius 
$q C_d R_\ks$, 
and this remains trivially true for  $q=0$.
Therefore by (\ref{SFnormboundallg})
\begin{alignat}1
\label{SFnormbound}
S_F(\TE)&\subseteq B_0(\kappa \TE),
\end{alignat} 
where
\begin{alignat}3\label{kappa}
\kappa&:=\sqrt{\Da}C_{\en'}^{inf} \exp(q C_d R_\ks).
\end{alignat}
\begin{lemma}\label{SFLip}
There exists a constant $c_{\ks}(n)$ depending only on $\ks$ and $n$, a positive integer $\widetilde{M}$, and a positive real $\widetilde{L}$ with
$\widetilde{M}\leq c_\ks(n) M_{\en'}^{q+1}$, $\widetilde{L}\leq c_\ks(n)(L_{\en'}+C_{\en'}^{inf})$, such that
\begin{alignat}1\label{SFLip_eq}
\partial S_F(\TE) \in \Lip(\Da,\widetilde{M},\widetilde{L}\TE)\text{ and }S_F(\TE)\subseteq B_0(\widetilde{L}\TE)\text. 
\end{alignat} 
\end{lemma}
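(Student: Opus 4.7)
The plan is to use the homogeneity $S_F(\TE)=\TE\cdot S_F(1)$ from (\ref{homexp}) to reduce everything to $\TE=1$: any family of maps $[0,1]^{\Da-1}\to \R^{\Da}$ parametrizing $\partial S_F(1)$ with Lipschitz constant $\widetilde{L}$ rescales under $\vz\mapsto \TE\vz$ to a family of the same cardinality parametrizing $\partial S_F(\TE)$ with Lipschitz constant $\widetilde{L}\TE$. The containment $S_F(\TE)\subseteq B_0(\widetilde{L}\TE)$ is then immediate from (\ref{SFnormbound}) provided $\widetilde{L}\ge \kappa$, and $\kappa$ is already controlled by a $\ks$-dependent multiple of $C_{\en'}^{inf}$ thanks to (\ref{kappa}).

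For the Lipschitz parametrization of $\partial S_F(1)$ I would introduce polar-type coordinates adapted to the system $\en'$ and to the basis $u_1,\ldots,u_q$ of $l(\O_\ks^*)$. Every nonzero $\vz_i\in\R^{d_i(n+1)}$ decomposes uniquely as $\vz_i=r_i\omega_i$ with $r_i:=N_i(\vz_i)>0$ and $\omega_i:=\vz_i/r_i\in\{N_i=1\}$. Writing a generic point of $F(1)$ as $\sum_{i=1}^q t_i u_i + s\vdelta$ with $t\in[0,1)^q$ and $s\in(-\infty,0]$, the defining condition (\ref{inkl1}) becomes $d_j\log r_j=\sum_i t_i u_{i,j}+sd_j$, that is,
\[
 r_j = e^{s}\rho_j(t), \qquad \rho_j(t):=\exp\!\Bigl(\tfrac{1}{d_j}\sum_{i=1}^q t_i u_{i,j}\Bigr).
\]
By (\ref{uibound}), $|u_i|\le C_d R_\ks$, so both $\rho_j$ and $|\nabla\rho_j|$ are bounded on $[0,1]^q$ by a constant depending only on $\ks$.

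The topological boundary $\partial S_F(1)$ then decomposes into the face $s=0$ (the ``top''), the $2q$ faces where some $t_i\in\{0,1\}$ (the ``sides''), and the isolated limit point $\vnull$ reached as $s\to-\infty$. Each of the top and side faces is the image of
\[
 (t,\omega_1,\ldots,\omega_{q+1})\longmapsto \bigl(\rho_1(t)\omega_1,\ldots,\rho_{q+1}(t)\omega_{q+1}\bigr)
\]
restricted to a suitable $(\Da-1)$-dimensional face of $[0,1]^q\times\prod_i\{N_i=1\}$. By condition $(iii)$ of Definition \ref{defALS} each level set $\{N_i=1\}$ is covered by $M_{\en'}$ maps $\phi_{i,\alpha}:[0,1]^{d_i(n+1)-1}\to\R^{d_i(n+1)}$ with Lipschitz constant $L_{\en'}$. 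Taking the Cartesian product of these parametrizations over $i=1,\ldots,q+1$ and composing with the above polar map on each face yields at most $(2q+1)M_{\en'}^{q+1}$ parametrizations, to which I adjoin a single constant map at $\vnull$; this gives $\widetilde{M}\le c_\ks(n)M_{\en'}^{q+1}$.

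The Lipschitz constant of each such parametrization is bounded via the product rule: varying $\omega_i$ contributes $\rho_j(t)\cdot L_{\en'}$, while varying the parameters $(t_1,\ldots,t_q)$ contributes $|\omega_i|\cdot|\nabla\rho_j|$. From (\ref{normequivalence}) applied to $N_i(\omega_i)=1$ one gets $|\omega_i|\le\sqrt{d_i(n+1)}\,C_{\en'}^{inf}$, and combining this with the $\ks$-only bounds on $\rho_j$ and $\nabla\rho_j$ one concludes a total Lipschitz constant of at most $c_\ks(n)(L_{\en'}+C_{\en'}^{inf})$. The main obstacle I foresee is precisely this bookkeeping: making sure that in the product-rule expansion on each face no dependence on $\en$ beyond $L_{\en'}$ and $C_{\en'}^{inf}$ sneaks in through the values of $\rho_j$ or through the Euclidean norms of the $\omega_i$, but this is an explicit calculation rather than a conceptual difficulty.
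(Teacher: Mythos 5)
Your outline — reduce to $\TE=1$ by homogeneity, get the ball containment from \eqref{SFnormbound} and \eqref{kappa}, then Lipschitz-parametrize $\partial S_F(1)$ in polar-type coordinates adapted to $\en'$ and the basis $u_1,\ldots,u_q$ — is sound, and it essentially unpacks the content of \cite[Lemma 7.1]{art1}, which the paper simply invokes for $q\geq 1$ after noting that $\partial F$ is parametrized by $2q$ linear maps with $\ks$-controlled Lipschitz constants. So your proof is more self-contained than the paper's, at the cost of reproving that auxiliary lemma; the paper's $q=0$ case matches yours modulo the paper's silent omission of $\vnull$ from $\partial S_F(\TE)$.

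There is, however, a genuine gap in your parametrization of the side faces. Your map $(t,\omega_1,\ldots,\omega_{q+1})\mapsto(\rho_1(t)\omega_1,\ldots,\rho_{q+1}(t)\omega_{q+1})$ has domain $[0,1]^q\times\prod_i\{N_i=1\}$, which is already $(\Da-1)$-dimensional, and its image is exactly the top face $s=0$. A side face — say $\{t_1=0\}$ — is also $(\Da-1)$-dimensional, but there $s$ (equivalently $u:=e^s$) ranges over $(-\infty,0]$ (resp.\ $[0,1]$); restricting your map to $\{t_1=0\}$ drops a dimension and does \emph{not} cover it. The fix is to parametrize each side face as $(u,t',\omega)\mapsto(u\rho_1(t)\omega_1,\ldots,u\rho_{q+1}(t)\omega_{q+1})$ with $u\in[0,1]$, $t'\in[0,1]^{q-1}$, and $t_i\in\{0,1\}$ fixed; the dimension count then comes out to $\Da-1$. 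You must then also account for the new partial $\partial/\partial u=(\rho_j(t)\omega_j)_j$, which is bounded in Euclidean norm by $c_\ks(n)C_{\en'}^{inf}$ using \eqref{uibound} and $|\omega_j|\leq\sqrt{d_j(n+1)}C_{\en'}^{inf}$; the remaining partials acquire an extra factor $u\leq 1$ and are therefore no larger than before. With this correction your bounds $\widetilde{M}\leq c_\ks(n)M_{\en'}^{q+1}$ and $\widetilde{L}\leq c_\ks(n)(L_{\en'}+C_{\en'}^{inf})$ stand.
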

\begin{proof}
The second part follows immediately from \eqref{SFnormbound} and \eqref{kappa}.

Let us now prove the first part. For $q=0$ our set $S_F(\TE)$ is precisely the set defined by $N_v(\vz)\leq \TE$, where
$v$ is the single Archimedean place of $\ks$. So the boundary of $S_F(\TE)$ is the set $\{\vz:N_v(\vz)=\TE\}=\TE\{\vz:N_v(\vz)=1\}$.
By assumption $\en'$ is an adelic Lipschitz system, and thus the latter set lies in Lip$(\Da,M_{\en'},L_{\en'}\TE)$. This proves the lemma for $q=0$.

Suppose now that $q\geq 1$. 
Then we can find $2q$ linear maps $\psi_i:[0,1]^{q-1}\rightarrow \Sigma$ parameterizing $\partial F$ that, because of (\ref{uibound}), will satisfy a 
Lipschitz condition with 
constant $(q-1)C_d R_\ks$ (for $q=1$ this is simply interpreted as $|\partial F|\leq 2$). 
The claim now follows from \cite[Lemma 7.1]{art1} by a simple computation. 
\end{proof}
We conclude from \cite[Lemma 4]{1}, (\ref{FVol}), and (\ref{homexp}) that
$S_F(\TE)$ is measurable and has volume
\begin{alignat}1\label{SFVol}
\Vol S_F(\TE)=(n+1)^q R_\ks V_{\en'}  \TE^{\Da}.
\end{alignat}

\begin{lemma}\label{mainlemma}We have
\begin{alignat*}1
N_\en(\P^n(\ks),X) = \omega_\ks^{-1}\sum_{C \in \mathcal{R}}\sum_{\il D \in S_C}|\Lambda_C^*(\il D) \cap S_F(X \NormK\il D^{1/[\KL : \Q]})|\text.
\end{alignat*}
\end{lemma}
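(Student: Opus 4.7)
The strategy is to partition $\P^n(\ks)$ according to the ideal class of $\O_\ks(\bbalf)$ and the value of $\iN(\bbalf)$, then identify each orbit of an appropriate unit action with points of $\Lambda_C^*(\il D)$ lying in $S_F(X\NormK\il D^{1/[\KL:\Q]})$. The factor $\omega_\ks^{-1}$ will arise because a fundamental domain for $l(\O_\ks^*)$ in $\Sigma$ produces one representative per coset of $\O_\ks^*/\mu_\ks$, while scaling by roots of unity leaves $\sigma(\bbalf)$'s log-profile unchanged.

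First I would show that for each $P \in \P^n(\ks)$ with $P = (\balpha_0 : \cdots : \balpha_n)$ there is a choice of $\bbalf = (\balpha_0,\ldots,\balpha_n) \in \ks^{n+1}$ with $\O_\ks(\bbalf) = C$ for the unique $C \in \mathcal{R}$ representing the class of $\O_\ks(\bbalf')$ for any representative $\bbalf'$, and that this $\bbalf$ is unique up to multiplication by $\O_\ks^*$. Since $|\eta|_w = 1$ for any $\eta \in \O_\ks^*$ and any non-Archimedean $w$, the ideal $\iN(\bbalf)$ is invariant under the $\O_\ks^*$-action, so after further grouping by $\il D := \iN(\bbalf)$, we restrict to $\il D \in S_C$.

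Next I would convert the height condition $\Hen(P) \leq X$ into a condition on $\sigma(\bbalf) \in \R^\Da$. Splitting the defining product of $\Hen(\bbalf)$ at Archimedean and non-Archimedean places and using $N_w(\sigma_w(\bbalf)) = |\iN(\bbalf)|_w$ for $w \nmid \infty$, one finds
\[
\prod_{w\nmid\infty} N_w(\sigma_w(\bbalf))^{d_w/[\KL:\Q]} = \NormK(\il D)^{-1/[\KL:\Q]},
\]
and, grouping the remaining factors by $v \mid \infty$ of $\ks$ using (\ref{N_v}),
\[
\Hen(\bbalf) = \NormK(\il D)^{-1/[\KL:\Q]} \prod_{v\mid\infty} N_v(\sigma_v(\bbalf))^{d_v/d}.
\]
Hence $\Hen(\bbalf) \leq X$ is equivalent to $\prod_v N_v(\sigma_v(\bbalf))^{d_v/d} \leq \TE$ with $\TE = X\NormK(\il D)^{1/[\KL:\Q]}$.

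The key computation is to match this with membership in $S_F(\TE)$. For $\vec y(\bbalf) := (d_i \log N_i(\sigma_i(\bbalf)))_{i=1}^{q+1}$, unraveling the definition (\ref{inkl1}) shows $\sigma(\bbalf) \in S_F(\TE)$ iff $\vec y(\bbalf) = \vec f + s\vdelta$ for some $\vec f \in F$ and $s \leq \log \TE$; since $\sum y_i = d s$, the condition on $s$ is exactly $\prod_v N_v(\sigma_v(\bbalf))^{d_v/d} \leq \TE$. Under the action $\bbalf \mapsto \eta\bbalf$ with $\eta \in \O_\ks^*$, the vector $\vec y(\bbalf)$ is translated by $l(\eta) \in \Sigma$, so the $\vdelta$-component (and hence $\Hen(\bbalf)$) is $\O_\ks^*$-invariant while the $\Sigma$-component varies through a coset of $l(\O_\ks^*)$. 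Since $F$ is a fundamental domain for $l(\O_\ks^*)$ on $\Sigma$, each $\O_\ks^*$-orbit has exactly one element whose $\Sigma$-component lies in $F$; but the kernel of $l$ on $\O_\ks^*$ consists of the $\omega_\ks$ roots of unity, all of which leave $\vec y(\bbalf)$ fixed, so each orbit contributes exactly $\omega_\ks$ elements to $\Lambda_C^*(\il D) \cap S_F(\TE)$ when $\Hen(\bbalf) \leq X$, and zero otherwise.

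Putting these together,
\[
\omega_\ks \cdot N_\en(\P^n(\ks),X) = \sum_{C\in\mathcal R}\sum_{\il D \in S_C} |\Lambda_C^*(\il D) \cap S_F(X\NormK\il D^{1/[\KL:\Q]})|,
\]
which, on dividing by $\omega_\ks$, is the claimed identity. The main technical obstacle is the bookkeeping in the third step: carefully verifying that the $\Sigma$-component of $\vec y$ is shifted by $l(\eta)$ while the $\vdelta$-component stays fixed, and cleanly extracting the factor $\omega_\ks$ from the roots of unity acting trivially on $\vec y$.
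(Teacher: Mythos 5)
Your proposal is correct and follows essentially the same route as the paper's proof: choose a representative $\bbalf$ of $P$ with $\O_\ks(\bbalf)=C$ (unique up to $\O_\ks^*$), observe that $F(\infty)=F+\IR\vdelta$ is a fundamental set for $l(\O_\ks^*)$ in $\IR^{q+1}$ so exactly $\omega_\ks$ representatives (corresponding to the kernel $\mu_\ks$ of $l$) have $\Sigma$-component in $F$, and then split $\hen$ into its Archimedean part $\prod_v N_v(\sigma_v\bbalf)^{d_v/d}$ and non-Archimedean part $\NormK\iN(\bbalf)^{-1/[\KL:\Q]}$ to convert $\hen(P)\leq X$ into $\sigma\bbalf\in S_F(X\NormK\il D^{1/[\KL:\Q]})$. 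The bookkeeping you flag as the main obstacle is handled in the same way in the paper.
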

\begin{proof}
Let $P\in \IP^n(\ks)$ with homogeneous coordinates
$(\balpha_0,\ldots,\balpha_n)=\bbalf \in \ks^{n+1}\backslash \{\bf{0}\}$.
Recall the definition of the adelic Lipschitz system $\en'$.  
The functions $N_v$ (or $N_i$) will denote those associated with $\en'$, whereas 
$N_w$ will denote a function associated with the adelic Lipschitz system $\en$ on $\KL$.

Now
\begin{alignat}3
\label{NvId1}
\iNprime(\bbalf)=\O_\ks(\bbalf)  
\end{alignat}
Suppose $\varepsilon \in \ks^*$. Then we have
\begin{alignat*}3
\label{NvId1}
\iNprime(\varepsilon\bbalf)=\varepsilon\iNprime(\bbalf).  
\end{alignat*}
Hence the ideal class of $\iNprime(\bbalf)$ is independent
of the coordinates $\bbalf$ we have chosen.
In particular, we can choose $\bbalf$ such that
$\iNprime(\bbalf)=C$ for some unique $C$ in $\mathcal{R}$. Thus,
$\bbalf$ is unique up to scalar multiplication by units $\eta$, and moreover, $\iN(\bbalf):=\D\in S_C$.
The set $F(\infty)=F+\R \vdelta$ is
a fundamental set of $\R^{q+1}$ under the 
action of the additive subgroup $l(\O_\ks^*)$.
Because of Definition \ref{defALS}, $(ii)$ we have  
\begin{alignat*}3
\log N_i(\sigma_i(\eta\bbalf))^{d_i}
=\log N_i(\sigma_i\bbalf)^{d_i}+d_i\log|\sigma_i\eta|
\end{alignat*}
for $1\leq i \leq q+1$.
Hence, there exist exactly $\omega_\ks$ representatives
$\bbalf$ of $P$ with 
\begin{alignat*}3
(d_1\log N_1(\sigma_1\bbalf),\ldots,d_{q+1}\log N_{q+1}(\sigma_{q+1}\bbalf))
\in F(\infty).
\end{alignat*}
But the above is equivalent with  
\begin{alignat*}3
(N_1(\sigma_1\bbalf)^{d_{1}},\ldots,N_{q+1}(\sigma_{q+1}\bbalf)^{d_{q+1}})
\in \exp(F(\infty)).
\end{alignat*}
Furthermore
\begin{alignat*}3
\exp(F(\TE_0))=\{(X_1,\ldots,X_{q+1})\in \exp(F(\infty)): 
X_1\cdots X_{q+1}\leq \TE_0^d\}.
\end{alignat*}
Hence, for all $\omega_\ks$ representatives $\bbalf$ of $P$ as above,
the inequality
\begin{alignat*}3
\prod_{v\mid \infty}N_v(\sigma_v(\bbalf))^{d_v/d}=\prod_{v\mid \infty}\prod_{w\mid v}N_w(\sigma_w(\bbalf))^{d_w/[\KL:\Q]}\leq \TE_0 
\end{alignat*}
is equivalent to
\begin{alignat*}3
\sigma\bbalf \in S_F(\TE_0).
\end{alignat*}
On the other hand, 
\begin{alignat*}3
\prod_{w\nmid \infty}N_w(\sigma_w(\bbalf))^{d_w/[\KL:\Q]}=\NormK \iN(\bbalf)^{-1/[\KL:\Q]}=\NormK \D^{-1/[\KL:\Q]}.
\end{alignat*}
As
\begin{alignat*}3
\hen(P)=\prod_{v\mid \infty}\prod_{w\mid v}N_w(\sigma_w(\bbalf))^{d_w/[\KL:\Q]}\prod_{w\nmid \infty}N_w(\sigma_w(\bbalf))^{d_w/[\KL:\Q]},
\end{alignat*}
the claim follows. 
\end{proof}

\begin{lemma}\label{NafterMI}We have
\begin{alignat*}1
&N_\en(\P^n(\ks),X) =\\ &\omega_\ks^{-1}\sum_{C \in \mathcal{R}}\sum_{\il D \in S_C}
\sum_{\il A \in T}\mu_\KL(\il A)\sum_{E \unlhd \O_\ks}\mu_\ks(E)|\Lambda(\il A \il D, CE) \cap S_F(X \NormK\il D^{1/[\KL : \Q]})|,
\end{alignat*}
where $E$ runs over all nonzero ideals of $\O_\ks$.
\end{lemma}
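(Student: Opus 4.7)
The plan is to apply M\"obius inversion twice to the expression in Lemma~\ref{mainlemma}: once over nonzero ideals of $\O_\KL$, to relax the condition $\iN(\bbalf) = \il D$ to $\iN(\bbalf) \subseteq \il D$, and once over nonzero ideals of $\O_\ks$, to relax $\O_\ks(\bbalf) = C$ to $\bbalf \in (CE)^{n+1}$. I would fix $C \in \mathcal{R}$ and $\il D \in S_C$, write $S := S_F(X\NormK\il D^{1/[\KL:\Q]})$, and note that $S$ is bounded by \eqref{SFnormbound}.

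For the first step, since $\iN(\bbalf) \subseteq \il D$ is equivalent to $\iN(\bbalf) = \il A \il D$ for a unique nonzero integral ideal $\il A \unlhd \O_\KL$, one obtains the disjoint decomposition $\Lambda_C(\il D) = \bigsqcup_{\il A \unlhd \O_\KL} \Lambda_C^*(\il A \il D)$. Boundedness of $S$ forces $\Lambda_C^*(\il A \il D) \cap S$ to be empty for all but finitely many $\il A$, so standard M\"obius inversion on the monoid of nonzero ideals of $\O_\KL$ would give
\[
|\Lambda_C^*(\il D) \cap S| = \sum_{\il A \unlhd \O_\KL} \mu_\KL(\il A)\, |\Lambda_C(\il A \il D) \cap S|.
\]
An entirely analogous argument, using that $\bbalf \in (CE)^{n+1}$ is equivalent to $\O_\ks(\bbalf) = CEF$ for a unique $F \unlhd \O_\ks$ and producing the decomposition $\Lambda(\il A \il D, CE) = \bigsqcup_{F \unlhd \O_\ks} \Lambda_{CEF}(\il A \il D)$, would yield via a second M\"obius inversion, now on nonzero ideals of $\O_\ks$, the identity
\[
|\Lambda_C(\il A \il D) \cap S| = \sum_{E \unlhd \O_\ks} \mu_\ks(E)\, |\Lambda(\il A \il D, CE) \cap S|.
\]

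Substituting both identities into Lemma~\ref{mainlemma} would give the desired formula except that the sum over $\il A$ would range over all nonzero ideals of $\O_\KL$. To truncate to $\il A \in T$, I would invoke the defining property $T \supseteq T_{C, \il D}$: for $\il A \notin T$ the set $\Lambda_C(\il A \il D)$ is empty, so the inner $E$-sum, equal to $|\Lambda_C(\il A \il D) \cap S|$ by the previous display, vanishes, and such $\il A$ may be dropped. I expect the main obstacle to be purely organisational, namely verifying that each M\"obius inversion is valid on the bounded region $S$ and that the passage from all $\il A$ to $\il A \in T$ is genuinely term-by-term trivial.
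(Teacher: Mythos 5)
Your two-step Möbius inversion — first over nonzero $\O_\KL$-ideals to relax $\iN(\bbalf)=\il D$ to $\iN(\bbalf)\subseteq \il D$, then over nonzero $\O_\ks$-ideals to relax $\O_\ks(\bbalf)=C$ to $\bbalf\in(CE)^{n+1}$, followed by the truncation to $\il A\in T$ via $T\supseteq T_{C,\il D}$ — is exactly the paper's argument, differing only in that the paper truncates before the second inversion. One small imprecision: your decomposition $\Lambda(\il A\il D, CE) = \bigsqcup_{F} \Lambda_{CEF}(\il A\il D)$ omits the point $\vnull$, which lies in the lattice on the left but in none of the sets $\Lambda_{CEF}(\il A\il D)$ (these all require $\O_\ks(\bbalf)=CEF\neq 0$); the paper explicitly invokes \eqref{0notinSF} (that $\vnull\notin S_F(\cdot)$) to discard it, and you should cite that rather than only boundedness of $S$.
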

\begin{proof}
We start off from Lemma \ref{mainlemma} and we apply M\"obius inversion twice to get rid of the two coprimality conditions $_C$ and $^*$.

Directly from the definition we get
\begin{alignat*}1
\Lambda_C(\il A \il D) = \bigcup_{\il B}\Lambda^*_C(\il A \il B \il D)\text,
\end{alignat*}
where $\il B$ runs over all nonzero ideals of $\O_\KL$. This is clearly a disjoint union. Note that $\Lambda^*_C(\il A \il B \il D)\neq \emptyset$ only when $\il A \il B \il D$ lies in the finite set $S_C$. 
M\"obius inversion leads then to
\begin{alignat*}1
|\Lambda_C^*(\il D) \cap S_F(X \NormK\il D^{1/[\KL : \Q]})|&=
\sum_{\il A}\mu_\KL(\il A)\sum_{\il B} |\Lambda_C^*(\il A\il B \il D) \cap S_F(X \NormK\il D^{1/[\KL : \Q]})|\\
&=\sum_{\il A}\mu_\KL(\il A)|\Lambda_C(\il A \il D) \cap S_F(X \NormK\il D^{1/[\KL : \Q]})|,
\end{alignat*}
where the sums run over all nonzero ideals in $\O_\KL$.
Next note that by definition of $T_{C,\il D}$ we have $\Lambda_C(\il A \il D)=\emptyset$ whenever $\il A\notin T_{C,\il D}$.
As $T_{C,\il D}\subseteq T$ we can restrict the last sum to $\il A\in T$ and we get
\begin{alignat*}1
|\Lambda_C^*(\il D) \cap S_F(X \NormK\il D^{1/[\KL : \Q]})|=\sum_{\il A\in T}\mu_\KL(\il A)|\Lambda_C(\il A \il D) \cap S_F(X \NormK\il D^{1/[\KL : \Q]})|.
\end{alignat*}

We now deal with the second coprimality condition $_C$. Also directly from the definition we get
\begin{equation*}
\Lambda(\il A \il D, EC)=\Lambda(\il A \il D) \cap \sigma((EC)^{n+1}) = \bigcup_{B \unlhd \O_\ks}\Lambda_{ECB}(\il A \il D) \cup \{0\}\text.
\end{equation*}
Again, $B$ runs over all nonzero ideals of $\O_\ks$ and the union is disjoint.
As $\sigma((EC)^{n+1})$ is a lattice and $ S_F(X \NormK\il D^{1/[\KL : \Q]})$ is bounded we conclude from the latter equality that
$\Lambda_{ECB}(\il A \il D)\cap S_F(X \NormK\il D^{1/[\KL : \Q]})$ is empty for all but finitely many $B$.
M\"obius inversion and \eqref{0notinSF} lead therefore to
\begin{alignat*}1
&|\Lambda_C(\il A \il D) \cap S_F(X \NormK\il D^{1/[\KL : \Q]})|\\&= \sum_{E \unlhd \O_\ks}\mu_\ks(E)\sum_{B \unlhd \O_\ks}|\Lambda_{ECB}(\il A \il D)\cap S_F(X \NormK\il D^{1/[\KL : \Q]})|\\
&= \sum_{E \unlhd \O_\ks}\mu_\ks(E)|\Lambda(\il A \il D, CE) \cap S_F(X \NormK\il D^{1/[\KL : \Q]})|.
\end{alignat*}
In view of Lemma \ref{mainlemma} this proves the claim.
\end{proof}

We choose a positive real $\nb$ such that for any $C\in \mathcal{R}$ and any $\il D\in S_C$ 
\begin{alignat}1\label{nb}
\nb\leq \frac{\Normk C}{\NormK(\il D)^{1/[\KL:\ks]}}.
\end{alignat}
Before we proceed note that if $S_C$ is chosen minimal for all $C\in \mathcal{R}$ (i.e. $S_C=\{\iN(\bbalf): \bbalf \in \ks^{n+1}, \O_\ks(\bbalf)=C\}$)
then it follows from (\ref{normestimate}) that we can choose $\nb={C_\en^{fin}}^{-d}$, and moreover, $|S_C|\leq |F_\en|$. 
\begin{lemma}\label{appliedlatticeestimate}
Let $\lambda_1=\lambda_1(\Lambda(\il A \il D, CE))$ be the first successive minimum of the lattice $\Lambda(\il A \il D, CE)$, and let
$\widetilde{M}$ and $\widetilde{L}$ be as in Lemma \ref{SFLip}. Then we have  
\begin{alignat*}1
|\Lambda(\il A \il D, CE) \cap S_F(X \NormK\il D^{1/[\KL : \Q]})|=
&\frac{\Vol S_F(1)\NormK\il D^{\frac{n+1}{[\KL : \ks]}}X^{\Da}}{\det \Lambda(\il A \il D, CE)}\\
+&O\left(\widetilde{M}\frac{\NormK\il D^{\frac{\Da-1}{[\KL : \Q]}}(\widetilde{L}X)^{\Da-1}}{\lambda_1^{\Da-1}}\right),
\end{alignat*}
where the constant in the $O$-term depends only on $\Da$. Moreover, with $\nb$ as in (\ref{nb}) we have
\begin{alignat*}1
\lambda_1\geq \NormK({\il D})^{1/[\KL : \Q]}(\nb\Normk(E))^{1/d}.
\end{alignat*}
And finally, with $\kappa$ as in (\ref{kappa}), if $\Normk E>(\kappa X)^d/\nb$ then 
\begin{alignat*}1
\Lambda(\il A \il D, CE) \cap S_F(X \NormK\il D^{1/[\KL : \Q]})=\emptyset.
\end{alignat*}
\end{lemma}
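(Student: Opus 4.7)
We handle the three assertions separately.

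For the counting estimate, the plan is to apply Lemma \ref{Lemmacountinglatticepts} with $a=1$ to $S:=S_F(X\NormK\il D^{1/[\KL:\Q]})$ and $\Lambda:=\Lambda(\il A\il D,CE)$. By Lemma \ref{SFLip} together with the homogeneity \eqref{homexp}, the set $S$ has boundary in $\Lip(\Da,\widetilde M,\widetilde L X\NormK\il D^{1/[\KL:\Q]})$ and is contained in the ball of that radius, while $\vnull\notin S$ by \eqref{0notinSF}. Since $\Vol S_F(\TE)=\TE^{\Da}\Vol S_F(1)$ and $\Da/[\KL:\Q]=(n+1)/[\KL:\ks]$, the main term of Lemma \ref{Lemmacountinglatticepts} takes the stated form. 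The max in the error term is always dominated by $L^{\Da-1}/\lambda_1^{\Da-1}$: this is immediate when $\lambda_1\leq L$, while when $\lambda_1>L$ the intersection is empty and $\Vol S/\det\Lambda\leq c(\Da)(L/\lambda_1)^{\Da}\leq c(\Da)(L/\lambda_1)^{\Da-1}$ by Minkowski's second theorem.

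The lower bound for $\lambda_1$ is the crux. For $\bbalf\in\ks^{n+1}\setminus\{0\}$ with $\sigma(\bbalf)\in\Lambda$, I will establish the chain
\begin{equation*}
|\sigma(\bbalf)|^d\geq\Bigl(\max_{v\mid\infty,\,i}|\balpha_i|_v\Bigr)^d\geq\prod_{v\mid\infty}(\max_i|\balpha_i|_v)^{d_v}\geq\Normk\O_\ks(\bbalf)\geq\Normk(CE)\geq\nb\NormK(\il D)^{\frac{1}{[\KL:\ks]}}\Normk(E).
\end{equation*}
The first two steps are elementary (using $\sum_{v\mid\infty}d_v=d$ for the second). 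The third step is the key: applying the product formula $\prod_v|\balpha_j|_v^{d_v}=1$ to any nonzero coordinate $\balpha_j$, together with the identity $|\O_\ks(\bbalf)|_v=\max_i|\balpha_i|_v$ at non-Archimedean $v$, gives $\prod_{v\mid\infty}(\max_i|\balpha_i|_v)^{d_v}\geq\prod_{v\mid\infty}|\balpha_j|_v^{d_v}=\prod_{v\nmid\infty}|\balpha_j|_v^{-d_v}\geq\prod_{v\nmid\infty}(\max_i|\balpha_i|_v)^{-d_v}=\Normk\O_\ks(\bbalf)$. The fourth step uses $\bbalf\in(CE)^{n+1}\Rightarrow\O_\ks(\bbalf)\subseteq CE$, and the last is the defining inequality \eqref{nb} of $\nb$. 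Taking $d$-th roots delivers the claimed lower bound for every nonzero lattice vector.

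Finally, for the emptiness claim, the hypothesis $\Normk E>(\kappa X)^d/\nb$ combined with the bound just proved yields $\lambda_1>\kappa X\NormK\il D^{1/[\KL:\Q]}$. Since $S\subseteq B_0(\kappa X\NormK\il D^{1/[\KL:\Q]})$ by \eqref{SFnormbound} and $\vnull\notin S$, no nonzero lattice vector of $\Lambda$ lies in $S$. The main obstacle is the product-formula juggling in the middle step: one must feed the downstairs condition $\bbalf\in(CE)^{n+1}$ together with the relation $\Normk C\geq\nb\NormK(\il D)^{1/[\KL:\ks]}$ (arising from $\il D\in S_C$) into the product formula in precisely the right way, so that the final bound features both $\Normk(E)$ and $\NormK(\il D)$ with the correct exponents — exactly the form needed for the series over $E$ built from the Part~1 error to converge.
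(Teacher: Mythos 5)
Your proof is correct, but it takes a slightly different route from the paper in two places, both of which make the argument longer than necessary.

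For the counting estimate, you apply Lemma \ref{Lemmacountinglatticepts} with $a=1$, obtaining the error bound $\max\{1,(L/\lambda_1)^{\Da-1}\}$, and then repair the case $\lambda_1 > L$ by observing the intersection is empty and bounding $\Vol S/\det\Lambda$ via Minkowski's second theorem. This works, but it re-derives inside your proof exactly the dichotomy that Lemma \ref{Lemmacountinglatticepts} was designed to encapsulate: taking $a=\Da$ there makes both entries of the max equal to $L^{\Da-1}/\lambda_1^{\Da-1}$, so the desired error term pops out with no further argument. The paper uses $a=\Da$ for this reason.

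For the lower bound on $\lambda_1$, your product-formula chain is correct and self-contained, and the step $\prod_{v\mid\infty}(\max_i|\balpha_i|_v)^{d_v}\geq\Normk\O_\ks(\bbalf)$ is exactly the content of the classical fact that the first successive minimum of $\sigma(\ida)$ is at least $\Normk(\ida)^{1/d}$ for any nonzero ideal $\ida$. The paper simply observes $\Lambda(\il A\il D,CE)\subseteq\sigma((CE)^{n+1})$ and then cites this fact (attributed to Masser--Vaaler, Lemma 5). Your derivation buys self-containment at the cost of a longer computation; both are legitimate. Your part three (emptiness) matches the paper exactly, and the logic feeding the $\lambda_1$ bound into \eqref{SFnormbound} and \eqref{0notinSF} is sound.
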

\begin{proof}
For the first assertion we use (\ref{0notinSF}) and apply Lemma \ref{Lemmacountinglatticepts} with $a=\Da$. Thanks to (\ref{SFnormbound}) and Lemma \ref{SFLip} the required 
conditions are satisfied, and using (\ref{homexp}) the first result drops out.

Now for the second statement we first observe that $\lambda_1$ is at least as large as the first successive minimum of the lattice $\sigma(CE)$. But it is well-known
that the latter is at least $\Normk(CE)^{1/d}$, see, e.g., \cite[Lemma 5]{1}. Now as
${\il D}\in S_C$ and by the definition of $\nb$ we get $\Normk C\geq \nb\NormK(\il D)^{1/[\KL:\ks]}$ and this yields the second assertion.

The last claim follows upon combining the above estimate for $\lambda_1$ with (\ref{0notinSF}), (\ref{SFnormbound}).   
\end{proof}
We can now conclude the proof of Theorem \ref{generalthm}. 
Let us first assume that $(n,d)\neq (1,1)$. Combining Lemma \ref{NafterMI}, Lemma \ref{appliedlatticeestimate} and (\ref{SFVol}) gives the main term
as in Theorem \ref{generalthm}. The error term is bounded by   
\begin{alignat*}1
&\sum_{C \in \mathcal{R}}\sum_{\il D \in S_C}
\sum_{\il A \in T}\sum_{E \unlhd \O_\ks} O\left(\widetilde{M}\frac{\NormK\il D^{\frac{\Da-1}{[\KL : \Q]}}(\widetilde{L}X)^{\Da-1}}{\lambda_1^{\Da-1}}\right)\\
\leq &\sum_{C \in \mathcal{R}}\sum_{\il D \in S_C}
\sum_{\il A \in T}\sum_{E \unlhd \O_\ks} O\left(\frac{\widetilde{M}(\widetilde{L}X)^{\Da-1}}{\nb^{(\Da-1)/d}\Normk E^{(n+1)-1/d}}\right)\\
\leq &\sum_{C \in \mathcal{R}}\sum_{\il D \in S_C}
\sum_{\il A \in T}O\left(\frac{\widetilde{M}(\widetilde{L}X)^{\Da-1}}{\nb^{(\Da-1)/d}}\right)\\
= & O\left(\sum_{C \in \mathcal{R}}|S_C||T|\frac{\widetilde{M}(\widetilde{L}X)^{\Da-1}}{\nb^{(\Da-1)/d}}\right)\
\end{alignat*}
This proves the Theorem in the case $(n,d)\neq (1,1)$ except that the constant in the error term is different from the one in the statement 
of the theorem. In particular, it shows that the main term is independent of the particular choice of the sets $S_C$.
However, if we choose all the sets $S_C$ to be minimal then, by the remark just after (\ref{nb}), we can choose
$\nb={C_\en^{fin}}^{-d}$, and $|S_C|\leq |F_\en|$. This, and not forgetting the definition of $\widetilde{M}$ and $\widetilde{L}$ from Lemma \ref{SFLip}, yields the desired error term.

We now assume $(n,d)=(1,1)$ (which of course means $\ks=\Q$, $\mathcal{R}=\{C\}$, $\omega_\ks=2$). 
Using also the last part of Lemma \ref{appliedlatticeestimate} we conclude
\begin{alignat*}1
&N_\en(\P^1(\Q),X) = \\
&\frac{1}{2}\sum_{\il D \in S_C}
\sum_{\il A \in T}\mu_\KL(\il A)\sum_{E \unlhd \Z \atop \Norm_\Q E\leq \kappa X/\nb}\mu_\Q(E)|\Lambda(\il A \il D, CE) \cap S_F(X \NormK\il D^{1/[\KL : \Q]})|\\
&=\frac{1}{2}\sum_{\il D \in S_C}
\sum_{\il A \in T}\mu_\KL(\il A)\sum_{E \unlhd \Z}\mu_\Q(E)
\frac{\Vol S_F(1)\NormK\il D^{\frac{2}{[\KL : \Q]}}X^{2}}{\det \Lambda(\il A \il D, CE)}\\
&+O\left(\sum_{\il D \in S_C}
\sum_{\il A \in T}\sum_{E \unlhd \Z \atop \Norm_\Q E>\kappa X/\nb}\frac{\Vol S_F(1)\NormK\il D^{\frac{2}{[\KL : \Q]}}X^{2}}{\det \Lambda(\il A \il D, CE)}\right)\\
&+
O\left(\sum_{\il D \in S_C} \sum_{\il A \in T}\sum_{E \unlhd \Z \atop \Norm_\Q E\leq \kappa X/\nb}
\frac{\widetilde{M}\NormK\il D^{\frac{1}{[\KL : \Q]}}\widetilde{L}X}{\lambda_1}\right).
\end{alignat*}
Now the first term gives the main term as before. For the second term we use Minkowski's first theorem to estimate the determinant in terms of $\lambda_1$, and then a simple 
computation using Lemma \ref{appliedlatticeestimate} and (\ref{SFnormbound}) gives the error term $O(|S_C||T|(1+\kappa X/\nb)$. 
For the last error term we use again Lemma \ref{appliedlatticeestimate}, and again a simple computation yields the error term 
\begin{equation*}
 O(|S_C||T|(\widetilde{M}\widetilde{L}/\nb)X(1+\log(\kappa X/\nb)).
\end{equation*}
To get the right error term we choose again $S_C$ to be minimal so that  we can take $\nb={C_\en^{fin}}^{-1}$, and $|S_C|\leq |F_\en|$.
This proves Theorem \ref{generalthm}.

\section{Proof of Theorem \ref{Thmthetan}}\label{section8}
In this section, we deduce Theorem \ref{Thmthetan} from Theorem \ref{generalthm}. Recall the simple facts
mentioned just before Lemma \ref{lemmaConstantEstimate}.

As mentioned after Lemma \ref{lemmaVolumeInvariant}, we can and will assume that $\theta$ is an algebraic integer. Let $\KL := \ks(\theta)$, and let $\en$ be the adelic Lipschitz system on $\KL$ of dimension $n$ defined by
\[N_w(\vz) := \max\{|z_0|_w, |\theta|_w |z_1|_w,\ldots, |\theta|_w |z_n|_w\}\text,\]
so
\begin{equation}\label{ientheta}
\iN(\bbalf) = \balpha_0 \O_\KL + \theta \balpha_1 \O_\KL + \cdots + \theta \balpha_n \O_\KL.
\end{equation}

\begin{lemma}\label{pf_thmthetan_affine_projective}
 We have
 \begin{align*}
   N(\theta\ks^n, X) = N_\en(\P^n(\ks), X) + O(X^{nd}),
 \end{align*}
where the implicit constant in the error term depends only on $\ks$, $\theta$, and $n$.
\end{lemma}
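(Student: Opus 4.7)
The plan is to identify $\ks^n$ with the affine part of $\P^n(\ks)$ in a height-preserving way and then to dispose of the hyperplane at infinity by an application of Schanuel's theorem.

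First, I would observe that the map $\balf = (\alpha_1,\ldots,\alpha_n) \mapsto P_\balf := (1:\alpha_1:\cdots:\alpha_n)$ is a bijection from $\ks^n$ onto the set of points $P = (\balpha_0:\cdots:\balpha_n) \in \P^n(\ks)$ with $\balpha_0 \neq 0$. Using \eqref{ientheta} at the finite places and the definition of $N_w$ at the Archimedean places, together with the definition of the Weil height $H$ on $\Qbar^n$ recalled in the introduction, one verifies place by place that
\[
 \Hen(P_\balf)^{[\KL:\Q]} = \prod_{w \in M_\KL}\max\{1,|\theta\alpha_1|_w,\ldots,|\theta\alpha_n|_w\}^{d_w} = H(\theta\alpha_1,\ldots,\theta\alpha_n)^{[\KL:\Q]}.
\]
Consequently
\[
 N_\en(\P^n(\ks), X) = N(\theta\ks^n, X) + N_\infty(X),
\]
where $N_\infty(X)$ denotes the number of points $P = (0:\balpha_1:\cdots:\balpha_n) \in \P^n(\ks)$ with $\Hen(P) \leq X$.

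Second, for such a point at infinity, the $1$ is absent from the maximum, so
\[
 \Hen(P)^{[\KL:\Q]} = \prod_{w \in M_\KL}\bigl(|\theta|_w\max\{|\alpha_1|_w,\ldots,|\alpha_n|_w\}\bigr)^{d_w}.
\]
Since $\theta \in \KL^*$, the product formula yields $\prod_{w \in M_\KL}|\theta|_w^{d_w} = 1$, and therefore $\Hen(P)$ is precisely the projective absolute multiplicative Weil height of the point $(\balpha_1:\cdots:\balpha_n) \in \P^{n-1}(\ks)$. The assignment $P \mapsto (\balpha_1:\cdots:\balpha_n)$ is clearly a bijection between the hyperplane at infinity and $\P^{n-1}(\ks)$.

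Finally, by Schanuel's theorem applied to $\P^{n-1}(\ks)$, the number of its points of Weil height at most $X$ is $S_\ks(n-1)X^{dn} + O(X^{dn-1}\log X) = O(X^{nd})$, with an implicit constant depending only on $\ks$ and $n$. Hence $N_\infty(X) = O(X^{nd})$, and the lemma follows. There is no genuine obstacle in the argument beyond the careful bookkeeping between affine and projective heights; the one algebraic input is the product formula used to cancel the factor $|\theta|_w$ when passing to the hyperplane at infinity.
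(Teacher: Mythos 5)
Your proof is correct and a little cleaner than the paper's. Both you and the paper begin with the same bijection between $\ks^n$ and the affine chart $\{\balpha_0\neq 0\}\subset\P^n(\ks)$, preserving heights, so the whole content is in bounding the count $N_\infty(X)$ on the hyperplane at infinity. The paper does this by re-invoking its own Theorem \ref{generalthm} in dimension $n-1$ with the adelic Lipschitz system $N_w(z_1,\ldots,z_n) = \max\{|\theta|_w|z_1|_w,\ldots,|\theta|_w|z_n|_w\}$; you instead observe that the factor $|\theta|_w$ is common to every entry of the maximum, so by the product formula $\prod_w |\theta|_w^{d_w}=1$ it cancels from the global product, and the restriction of $\Hen$ to $\{\balpha_0=0\}\cong\P^{n-1}(\ks)$ is literally the standard Weil height. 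This reduces $N_\infty(X)$ to the classical Schanuel count on $\P^{n-1}(\ks)$, which is $O(X^{dn})$, avoiding any appeal to the general machinery. That is a genuinely nicer argument and makes explicit the reason the paper's invocation of Theorem \ref{generalthm} works so painlessly. One small point to tidy up: Schanuel's theorem is stated for $\P^m(\ks)$ with $m\geq 1$, so your application to $\P^{n-1}(\ks)$ needs $n\geq 2$. For $n=1$ the hyperplane at infinity is the single point $(0:1)$, with $\Hen((0:1))=\prod_w|\theta|_w^{d_w/[\KL:\Q]}=1$ by the product formula, so $N_\infty(X)=1=O(X^d)$ trivially, exactly as the paper remarks. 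With that caveat noted, the proof is complete.
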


\begin{proof}
The points $\balf = (\balpha_1/\balpha_0, \ldots, \balpha_n/\balpha_0) \in \ks^n$ with $H(\theta\balf) \leq X$ are in one--to--one correspondence with the projective points $P = (\balpha_0 : \cdots : \balpha_n) \in \P^n(\ks)$ with $\balpha_0 \neq 0$ and $H_\en(P) \leq X$. 

If $n>1$ then we can apply Theorem \ref{generalthm} with $n-1$ and the adelic Lipschitz system given by the norm functions (see Lemma \ref{en-norms} in the appendix)
\begin{equation}
N_w((z_1, \ldots, z_n)) := \max\{ |\theta|_w |z_1|_w,\ldots, |\theta|_w |z_n|_w\}
\end{equation}
(with $\mathcal{R}$, $S_C$ and $T$ chosen in such a way that $|T|$ is minimal) to see that the number of such points $P$ with $\balpha_0 = 0$ is $O(X^{n d})$. This trivially remains true for $n=1$.
\end{proof}

Since the functions $N_w$ are norms, the adelic Lipschitz system $\en$ satisfies the hypothesis of Theorem \ref{generalthm}. As our choice of $\mathcal{R}$, $S_C$ and $T$ in Theorem \ref{generalthm} will depend only on $\ks$, $n$ and $\theta$, we obtain
\begin{equation}\label{sec8_general_formula}
N_\en(\P^n(\ks), X) = \omega_\ks^{-1}(n+1)^{r+s-1}R_\ks V_{\en'} g_\ks^\en X^{d(n+1)}+O(X^{d(n+1)-1}\mathfrak{L})\text,
\end{equation}

where $\mathfrak{L}:=\log(X+1)$ if $(n,d)=(1,1)$ and  $\mathfrak{L}:=1$ otherwise.
The implicit constant in the error term depends only on $\ks$, $\theta$, and $n$.

We notice that 
\begin{equation}\label{sec8_volume}
V_{\en'} = (2^{r}\pi^{s})^{n+1}V(\theta,\ks,n),
\end{equation}
with $V(\theta, \ks, n)$ as in \eqref{defV}. To prove the theorem, we need to compute $g_\ks^\en$. First we choose the sets $\mathcal{R}$, $S_C$ and $T$. Denote 
\[
D:= \down{(\theta \O_K)}\text.
\]
For $\mathcal{R}$ we choose any system of integral representatives for the class group $\Cl_\ks$ with
\begin{equation}\label{ideal_reps_rel_prime}
(C, D) = \O_\ks \text{ for all }C \in \mathcal{R}\text.
\end{equation}
We will see in Lemma \ref{SC}, \emph{(i)}, that
\begin{equation}\label{sec8_defsc}
 S_C := \{\up C(\theta \O_\KL, \up B): B \unlhd \O_\ks\text{, }B \mid D\}
\end{equation}
is a valid choice for $S_C$. For $T$, we take the finite set
\begin{equation}\label{defT}
T := \bigcup_{C \in \mathcal R}\bigcup_{\il D \in S_C}T_{C, \il D} \cup \{\il A \unlhd \O_\KL: \il A \mid \theta\O_\KL\}\text.
\end{equation}

\begin{lemma}\label{SC}\ 
 \begin{enumerate}[(i)]
 \item Let $\bbalf \in \ks^{n+1}$ with $\O_\ks(\bbalf) = C$. Then $\iN(\bbalf) \in S_C$.  
 \item Let $\il A$ be an ideal of $\O_\KL$ and $B$ an ideal of $\O_\ks$. Then $\down (\il A, \up B) = (\down \il A, B)$. 
 \item Let $B$ be an ideal of $\O_\ks$ with $B \mid D$. Then $\down (\theta \O_\KL, \up B) = B$.
 \end{enumerate}
\end{lemma}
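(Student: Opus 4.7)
The plan is to treat the three parts in the order \emph{(ii)}, \emph{(iii)}, \emph{(i)}, because \emph{(i)} is by far the most delicate and depends on the first two.

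For \emph{(ii)}, I would compute $P$-adic valuations at each prime $P$ of $\O_\ks$ using the formula $v_P(\down \il E) = \max_{\il P\mid P}\lceil v_{\il P}(\il E)/e_{\il P}\rceil$ recalled just before Lemma \ref{lemmaConstantEstimate}. Applying it to $\il E = (\il A, \up B)$, whose valuation at $\il P$ is $\min(v_{\il P}(\il A),\ e_{\il P} v_P(B))$, yields
\[
v_P(\down(\il A,\up B)) = \max_{\il P\mid P}\min\bigl(\lceil v_{\il P}(\il A)/e_{\il P}\rceil,\ v_P(B)\bigr).
\]
Since $v_P(B)$ is independent of $\il P$, $\min$ commutes with $\max$, so the right-hand side equals $\min(v_P(\down\il A),\ v_P(B)) = v_P((\down \il A, B))$, proving \emph{(ii)}. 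Part \emph{(iii)} is then immediate: \emph{(ii)} gives $\down(\theta\O_\KL,\up B) = (D,B)$, and $B\mid D$ means $D\subseteq B$, so $(D,B) = B$.

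For \emph{(i)} the natural candidate is $B := (\balpha_0 \O_\ks, D)$. Because $C\in\mathcal{R}$ is integral, every $\balpha_i$ lies in $\O_\ks$, so $B$ is an integral ideal of $\O_\ks$, and $v_P(B) \leq v_P(D)$ is obvious, so $B\mid D$. It remains to verify the ideal equality $\iN(\bbalf) = \up C\,(\theta\O_\KL,\up B)$ one prime $\il P$ of $\O_\KL$ at a time. Setting $a_i := v_{\il P}(\balpha_i)$, $t := v_{\il P}(\theta)$ and $e := e_{\il P}$, a direct computation gives
\[
v_{\il P}(\iN(\bbalf)) - v_{\il P}(\up C) = \min\bigl(a_0,\ t+\min_{i\geq 1}a_i\bigr) - \min_{i}a_i,
\]
which must be matched with $v_{\il P}((\theta\O_\KL,\up B)) = \min(t,\ e\,v_P(B))$.

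The main obstacle is this matching, and it is where the coprimality $(C,D) = \O_\ks$ built into the choice of $\mathcal{R}$ becomes essential. At each prime $P$ of $\O_\ks$, the coprimality forces either $v_P(D)=0$ (so $t=0$ for all $\il P\mid P$, and both sides vanish) or $v_P(C)=0$ (so some $a_{i_0}=0$). In the latter case the left-hand side reduces to $\min(a_0, t)$, and since $\balpha_0 \in \ks$ gives $a_0 = e\,v_P(\balpha_0)$, the choice $v_P(B) = \min(v_P(\balpha_0), v_P(D))$ together with $e\,v_P(D) \geq t$ yields $\min(t,\, e\,v_P(B)) = \min(t, a_0)$, matching the left-hand side. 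Without $(C,D) = \O_\ks$ the statement would already fail, e.g.\ for $\theta = \sqrt{2}$, $\ks = \Q$ and $\bbalf = (2,0)$ one has $\iN(\bbalf) = 2\O_\KL$, which is not of the form $\up C(\theta\O_\KL,\up B)$; apart from this case split, the rest is routine valuation bookkeeping.
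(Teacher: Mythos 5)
Your parts \emph{(ii)} and \emph{(iii)} are correct and match the paper's approach: the $\lceil \min(v_{\il P}(\il A), e_{\il P}v_P(B))/e_{\il P}\rceil = \min(\lceil v_{\il P}(\il A)/e_{\il P}\rceil, v_P(B))$ manipulation and the interchange of $\max$ and $\min$ are exactly what the paper does, and \emph{(iii)} follows as you say.

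For \emph{(i)} your argument is correct in the setting where it is applied, but it is a genuinely different route, and your side remark about necessity of coprimality reveals a misconception. The paper factors $\up C$ out of $\iN(\bbalf)$, observes that the two integral ideals $\balpha_0(\up C)^{-1}$ and $(\balpha_1\O_\KL + \cdots + \balpha_n\O_\KL)(\up C)^{-1}$ are coprime (they sum to $\O_\KL$ since $\O_\ks(\bbalf)=C$), hence $\iN(\bbalf)(\up C)^{-1} = \balpha_0(\up C)^{-1} + \theta\O_\KL$, and then absorbs $\up D \subseteq \theta\O_\KL$ to identify $\iN(\bbalf) = \up C(\theta\O_\KL, \up B)$ with $B := (\balpha_0 C^{-1}, D)$. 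This is a short ideal-theoretic argument that needs \emph{no} coprimality assumption on $(C,D)$ whatsoever: with that choice of $B$, Lemma \ref{SC}\emph{(i)} holds for every integral $C$. What you do instead is fix $B := (\balpha_0\O_\ks, D)$ and verify the equality $\il P$-by-$\il P$ by valuation bookkeeping; this is correct, and the key step $\min(t, e\,v_P(B)) = \min(t, a_0)$ does work because under \eqref{ideal_reps_rel_prime} the only interesting primes have $v_P(C)=0$, which forces $(\balpha_0\O_\ks,D) = (\balpha_0 C^{-1}, D)$. So your proof buys nothing over the paper's and is somewhat more labored, though both yield the result in the context where the lemma is invoked.

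The aside claiming the coprimality is essential, and the counterexample $\theta=\sqrt2$, $\ks=\Q$, $\bbalf=(2,0)$, are both wrong. In that example $C = \O_\Q(\bbalf) = 2\Z$, $\up C = 2\O_\KL$, $D = \down(\sqrt{2}\O_\KL) = 2\Z$, and $\iN(\bbalf) = 2\O_\KL = \up C(\sqrt2\O_\KL, \O_\KL)$, which \emph{is} of the form $\up C(\theta\O_\KL,\up B)$ with $B=\Z\mid D$; the paper's choice $B=(\balpha_0 C^{-1},D) = (\Z, 2\Z) = \Z$ produces exactly this. What your example actually shows is that \emph{your} choice $B=(\balpha_0\O_\ks,D)=2\Z$ is the wrong one when $(C,D)\neq\O_\ks$, not that the statement fails. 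Since the error lives only in the side remark and the coprimality genuinely holds in the paper's setup, the main chain of reasoning stands, but the remark should be deleted or corrected.
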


\begin{proof}
\emph{(i)}: We have $\balpha_0 \O_\KL + \cdots + \balpha_n \O_\KL = \up \O_\ks(\bbalf) = \up C$, so
\begin{align*}
\iN(\bbalf) = \up C (\balpha_0 (\up C)^{-1} + \theta (\balpha_1 (\up C)^{-1} + \cdots + \balpha_n (\up C)^{-1})) = \up C(\balpha_0(\up C)^{-1}, \theta \O_\KL)\text.
\end{align*}
Moreover, since $\theta\O_\KL \mid \up D$, we obtain 
\[(\balpha_0 (\up C)^{-1}, \theta\O_\KL) = (\balpha_0 (\up C)^{-1}, \up D, \theta\O_\KL) = (\theta\O_\KL, \up B)\text,\]
for $B := (\balpha_0 C^{-1}, D) \mid D$.

\noindent\emph{(ii)}: Let $P$ be a prime ideal of $\O_\ks$ and $\up P = \prod_{\il P} \il P^{e_{\il P}}$ its factorization in $\O_\KL$. Then
\begin{align*}
 v_P(\down (\il A, \up B)) &= \max_{\il P}\{\lceil \min\{v_{\il P}(\il A), v_{\il P}(\il \up B)\}/e_{\il P} \rceil\}\\
 &=\max_{\il P}\{\min\{\lceil v_{\il P}(\il A)/e_{\il P} \rceil, v_P(B)\}\}\\
 &=\min\{ \max_{\il P}\{\lceil v_{\il P}(\il A)/e_{\il P} \rceil \}, v_P(B)\} = v_P((\down \il A, B))\text.
\end{align*}

\noindent\emph{(iii)}: By \emph{(ii)}, we have $\down (\theta \O_\KL, \up B)  = (D, B) = B$.
\end{proof}

The first step in our computation of $g_\ks^\en$ is to evaluate the determinant of the lattice $\Lambda(\il A \il D, CE)= \Lambda(\il A\il D) \cap \sigma((CE)^{n+1})$. 
 
\begin{lemma}\label{Lambdadeterminant}
 Let $\il A$, $B$ be nonzero ideals of $\O_\KL$ and $\O_\ks$, respectively. Then
 \[\det \Lambda(\il A, B) = (2^{-s}\sqrt{|\Delta_\ks|})^{n+1}\cdot \Normk(\down \il A \cap B)\cdot \Normk\left(\down \left(\il A(\theta \O_\KL, \il A)^{-1}\right) \cap B\right)^n\text.\]
\end{lemma}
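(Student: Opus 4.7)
The plan is to describe $\Lambda(\il A, B)$ explicitly as a Cartesian product of $n+1$ lattices coming from ideals of $\O_\ks$, then invoke the standard fact that $\det\sigma(I) = 2^{-s}\sqrt{|\Delta_\ks|}\,\Normk(I)$ for a nonzero integral ideal $I$ of $\O_\ks$.

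Unpacking \eqref{ientheta}, the condition $\iN(\bbalf)\subseteq \il A$ decouples into $\balpha_0\in \il A$ and $\theta\balpha_i\in \il A$ for $i=1,\ldots,n$. Combined with $\bbalf\in B^{n+1}$, the integrality of $B$ forces each $\balpha_j$ into $\O_\ks$, so for the zeroth coordinate the condition becomes simply $\balpha_0\in\down{\il A}\cap B$. For each $i\geq 1$, I will show that $\balpha_i\in B$ together with $\theta\balpha_i\in \il A$ is equivalent to $\balpha_i\in\down(\il A(\theta\O_\KL,\il A)^{-1})\cap B$.

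Write $\il A' := \il A(\theta\O_\KL,\il A)^{-1}$. Using $v_{\il P}((\theta\O_\KL,\il A)) = \min\{v_{\il P}(\theta\O_\KL),v_{\il P}(\il A)\}$, a direct computation of $\il P$-adic valuations gives $v_{\il P}(\il A')=\max\{0,\,v_{\il P}(\il A)-v_{\il P}(\theta\O_\KL)\}$ for every prime $\il P$ of $\O_\KL$, so $\il A'$ is integral. For $\balpha\in\ks$, the condition $\theta\balpha\in\il A$ is equivalent to $e_{\il P}v_P(\balpha)\geq v_{\il P}(\il A)-v_{\il P}(\theta\O_\KL)$ for all primes $\il P$ of $\O_\KL$, i.e.\ to $v_P(\balpha)\geq\max_{\il P\mid P}\lceil(v_{\il P}(\il A)-v_{\il P}(\theta\O_\KL))/e_{\il P}\rceil$ for all primes $P$ of $\O_\ks$. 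Once $\balpha\in B\subseteq\O_\ks$ supplies $v_P(\balpha)\geq 0$, this sharpens to $v_P(\balpha)\geq \max\bigl\{0,\max_{\il P\mid P}\lceil(v_{\il P}(\il A)-v_{\il P}(\theta\O_\KL))/e_{\il P}\rceil\bigr\}$. Since $\max(0,\cdot)$ commutes with both $\lceil\cdot/e_{\il P}\rceil$ and $\max_{\il P\mid P}$, the right-hand side equals $\max_{\il P\mid P}\lceil v_{\il P}(\il A')/e_{\il P}\rceil=v_P(\down\il A')$ by the formula recalled before Lemma~\ref{lemmaConstantEstimate}, which is precisely the condition $\balpha\in\down\il A'$.

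It follows that $\Lambda(\il A,B)=\sigma(\down\il A\cap B)\times\sigma(\down\il A'\cap B)^n$ as a product of $n+1$ lattices in $\R^d$, and the asserted determinant formula drops out on multiplying the individual determinants. The main obstacle is the local equivalence established above: the fractional ideal $\il A(\theta\O_\KL)^{-1}$ need not be integral, so one must replace it by its integral hull $\il A(\theta\O_\KL,\il A)^{-1}$, and the reconciliation of the two conditions succeeds precisely because $\balpha_i\in B$ excludes negative $P$-adic valuations.
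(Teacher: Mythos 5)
Your proof is correct and follows essentially the same route as the paper: decompose the condition $\iN(\bbalf)\subseteq\il A$ coordinate-wise via \eqref{ientheta}, identify $\Lambda(\il A,B)$ as $\sigma\bigl((\down\il A\cap B)\times(\down(\il A(\theta\O_\KL,\il A)^{-1})\cap B)^n\bigr)$, and read off the determinant from the standard formula $\det\sigma(I)=2^{-s}\sqrt{|\Delta_\ks|}\,\Normk I$. The only difference is one of exposition: where the paper simply asserts that for $\balpha_i\in\O_\ks$ one has $\theta\balpha_i\in\il A\Leftrightarrow\il A(\theta\O_\KL,\il A)^{-1}\mid\balpha_i\O_\KL$, you carry out the $\il P$-adic verification and the commutation of $\max(0,\cdot)$ with the ceiling-and-maximum explicitly, which is fine and closes that gap cleanly.
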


\begin{proof}
Let $\bbalf = (\balpha_0, \ldots, \balpha_n) \in \ks^n$. Clearly, $\sigma\bbalf \in \Lambda(\il A, B)$ if and only if $\balpha_i \in B$ for all $0 \leq i \leq n$, $\balpha_0 \in \il A$, and $\theta\balpha_i \in \il A$ for all $1 \leq i \leq n$. For $\balpha_i \in \O_\ks$, we have
\[\theta \balpha_i \in \il A\quad \text{ if and only if }\quad \il A(\theta \O_\KL, \il A)^{-1} \mid \balpha_i \O_\KL\text.\]
Therefore, we obtain
\[\Lambda(\il A, B) = \sigma\left((\down \il A \cap B)\times\left(\down \left(\il A(\theta \O_\KL, \il A)^{-1}\right) \cap B\right)^n\right)\text.\]
\end{proof}

Let $\il A \in T$ and let $B$ be an ideal of $\O_\ks$ with $B \mid D$. To facilitate further notation, we define ideals $A$ and $A_1$ of $\O_\ks$ by
\begin{align}
A &= A(\il A, B)  := \down (\il A(\theta\O_\KL, \up B))\quad \text{ and }\label{sec8_defA}\\
A_1 &= A_1(\il A, B) := \down \left(\il A (\theta\O_\KL, \up B)(\theta \O_\KL, \il A\up B)^{-1}\right) \mid A\text.\label{sec8_defA1}
\end{align}
For any $\il D = \up C(\theta \O_\KL, \up B) \in S_C$ and for any nonzero ideal $E$ of $\O_\ks$ we have
\begin{align}\label{sec8_Aeq}
\Normk(\down(\il A \il D) \cap CE) = \Normk C\cdot \Normk(A \cap E)\text.
\end{align}
Clearly, we have $(\theta \O_\KL, \il A(\theta \O_\KL, \up B)) = (\theta\O_\KL, \il A \up B)$.
Furthermore, by our choice of $\mathcal{R}$ with \eqref{ideal_reps_rel_prime}, we have $(\up C, \theta \O_K) = \O_K$. Therefore, we obtain
\begin{align}\label{sec8_A1eq}
\Normk\left(\down \left((\il A \il D)(\theta \O_\KL, \il A \il D)^{-1}\right) \cap CE\right) 
= \Normk C \cdot \Normk\left(A_1 \cap E\right)\text.
\end{align}
Moreover, we have
\begin{align}\label{sec8_Deq}
\NormK\il D^{(n+1)/[\KL : \ks]} = \Normk C^{n+1}\cdot \NormK(\theta \O_\KL, \up B)^{(n+1)/[\KL:\ks]}\text.
\end{align}

\begin{lemma}\label{sec8_summand}
Let $B$ be an ideal of $\O_\ks$ with $B \mid D$, let $\il D = \up C(\theta \O_\KL, \up B) \in S_C$, let $\il A \in T$, and let $E$ be a nonzero ideal of $\O_\ks$. Then
 \[\frac{\NormK\il D^{\frac{n+1}{[\KL : \ks]}}}{\det\Lambda(\il A \il D, CE)} = (2^{-s}\sqrt{|\Delta_\ks|})^{-(n+1)}\cdot\frac{\NormK(\theta \O_\KL, \up B)^{\frac{n+1}{[\KL:\ks]}}}{\Normk(A \cap E)\cdot\Normk\left(A_1 \cap E\right)^n}\text.\]
\end{lemma}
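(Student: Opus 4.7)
The plan is that this lemma is essentially a bookkeeping consequence of Lemma \ref{Lambdadeterminant} combined with the three identities \eqref{sec8_Aeq}, \eqref{sec8_A1eq}, \eqref{sec8_Deq} that have just been recorded. So the proof should be almost immediate — a single computation with no real obstacle — but one has to keep the factors of $\Normk C$ straight.

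Concretely, the first step is to apply Lemma \ref{Lambdadeterminant} with the ideal $\il A$ of that lemma taken to be $\il A \il D$ and the ideal $B$ of that lemma taken to be $CE$. This expresses
\[
\det \Lambda(\il A \il D, CE) = (2^{-s}\sqrt{|\Delta_\ks|})^{n+1}\cdot \Normk(\down(\il A\il D) \cap CE)\cdot \Normk\bigl(\down((\il A\il D)(\theta\O_\KL,\il A\il D)^{-1})\cap CE\bigr)^n.
\]

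Next I would substitute the right-hand side of \eqref{sec8_Aeq} for the first norm factor and the right-hand side of \eqref{sec8_A1eq} for the second, which introduces a combined factor of $\Normk C^{\,n+1}$ coming from the $\Normk C$ in \eqref{sec8_Aeq} and the $\Normk C^n$ from \eqref{sec8_A1eq}. Then I would substitute \eqref{sec8_Deq} into the numerator, which contributes another factor $\Normk C^{\,n+1}$ together with the desired $\NormK(\theta\O_\KL,\up B)^{(n+1)/[\KL:\ks]}$. These two $\Normk C^{\,n+1}$ factors cancel exactly, and what remains is precisely the claimed identity.

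The only thing worth double-checking is that the three identities \eqref{sec8_Aeq}, \eqref{sec8_A1eq}, \eqref{sec8_Deq} genuinely apply with the stated definitions of $A$ and $A_1$ in \eqref{sec8_defA}, \eqref{sec8_defA1}; but these were already established in the paragraph preceding the lemma, using Lemma \ref{SC}\,\emph{(ii)} and the coprimality $(\up C,\theta\O_\KL)=\O_\KL$ coming from our choice \eqref{ideal_reps_rel_prime} of $\mathcal R$. There is no genuine obstacle; the lemma is purely a packaging step whose role is to put $\det \Lambda(\il A\il D,CE)$ into a form depending separately on $C$, on $B$, on $\il A$, and on $E$, so that the sums over $E$ and $C$ in $g_\ks^\en$ can later be carried out in closed form.
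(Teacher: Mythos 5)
Your proposal matches the paper's proof exactly: apply Lemma~\ref{Lambdadeterminant} with $\il A \il D$ and $CE$ in place of $\il A$ and $B$, substitute identities \eqref{sec8_Aeq}, \eqref{sec8_A1eq}, \eqref{sec8_Deq}, and cancel the resulting $\Normk C^{\,n+1}$ factors. The paper's one-line proof is just a compressed version of your bookkeeping.
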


\begin{proof}
We apply Lemma \ref{Lambdadeterminant} and use \eqref{sec8_Aeq}, \eqref{sec8_A1eq}, and \eqref{sec8_Deq}.
\end{proof}

\begin{lemma}\label{sec8_g1} We have
 \[g_\ks^\en = c_0 \sum_{B \mid D}\NormK(\theta \O_\KL, \up B)^{\frac{n+1}{[\KL : \ks]}}\sum_{\il A \in T}\mu_\KL(\il A)
\sum_{E \unlhd \O_\ks}\frac{\mu_\ks(E)}{\Normk(A \cap E)\cdot\Normk(A_1 \cap E)^n}\text,\]
where $A = A(\il A, B)$, $A_1 = A_1(\il A, B)$, and $c_0 := h_\ks 2^{s(n+1)}(\sqrt{|\Delta_\ks|})^{-(n+1)}$ and $E$ runs over all nonzero ideals of $\O_\ks$.
\end{lemma}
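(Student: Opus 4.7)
The plan is to substitute the closed form from Lemma \ref{sec8_summand} into the defining sum \eqref{general_formula} for $g_\ks^\en$ and observe that after this substitution the summand no longer depends on the ideal class representative $C \in \mathcal R$, so that the outer sum over $C$ collapses into a factor $h_\ks$.

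First, I would reindex the sum $\sum_{\il D \in S_C}$ as a sum over the ideals $B \mid D$. By \eqref{sec8_defsc}, $S_C$ is the image of $\{B \unlhd \O_\ks : B \mid D\}$ under $B \mapsto \up C(\theta \O_\KL, \up B)$, and this map is a bijection: by Lemma \ref{SC}\emph{(iii)} we recover $B = \down(\theta \O_\KL, \up B)$ from $(\theta \O_\KL, \up B)$, and $\up C$ can be cancelled. Hence
\[
g_\ks^\en = \sum_{C \in \mathcal R}\sum_{B \mid D}\sum_{\il A \in T}\mu_\KL(\il A)\sum_{E \unlhd \O_\ks}\mu_\ks(E)\,\frac{\NormK\il D^{(n+1)/[\KL:\ks]}}{\det \Lambda(\il A\il D, CE)},
\]
where $\il D = \up C(\theta \O_\KL, \up B)$.

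Second, I would invoke Lemma \ref{sec8_summand} to rewrite each summand as
\[
\frac{\NormK\il D^{(n+1)/[\KL:\ks]}}{\det \Lambda(\il A\il D, CE)} = \left(\frac{2^s}{\sqrt{|\Delta_\ks|}}\right)^{n+1}\frac{\NormK(\theta \O_\KL, \up B)^{(n+1)/[\KL:\ks]}}{\Normk(A \cap E)\cdot \Normk(A_1 \cap E)^n},
\]
where $A = A(\il A, B)$ and $A_1 = A_1(\il A, B)$ are as in \eqref{sec8_defA}, \eqref{sec8_defA1}. The right-hand side is independent of $C$ since $A$ and $A_1$ depend only on $\il A$ and $B$ by their definitions. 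The factor $\Normk C^{n+1}$ coming from \eqref{sec8_Deq} has already been cancelled against the matching factor in $\det \Lambda(\il A \il D, CE)$ furnished by \eqref{sec8_Aeq} and \eqref{sec8_A1eq}, which in turn relies on the choice \eqref{ideal_reps_rel_prime} of $\mathcal R$ with $(C, D) = \O_\ks$.

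Third, since the summand no longer depends on $C$, the sum over $C \in \mathcal R$ contributes simply $|\mathcal R| = h_\ks$. Collecting constants yields the claimed prefactor $c_0 = h_\ks 2^{s(n+1)}(\sqrt{|\Delta_\ks|})^{-(n+1)}$, and the remaining sums over $B$, $\il A$, and $E$ match the statement of the lemma exactly. There is no real obstacle in this argument: the entire content is bookkeeping, as the substantive computations — evaluating the lattice determinant in Lemma \ref{Lambdadeterminant}, isolating the $C$-dependent factor in Lemma \ref{sec8_summand}, and verifying the parametrization of $S_C$ in Lemma \ref{SC} — have already been carried out.
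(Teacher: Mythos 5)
Your proof is correct and follows essentially the same route as the paper's, which cites Lemma \ref{sec8_summand}, the independence from $C$, and \eqref{sec8_defsc} and then leaves the bookkeeping as "a simple computation"; you have simply spelled out that computation, including the needed (and correct) observation that $B \mapsto \up C(\theta\O_\KL,\up B)$ is a bijection onto $S_C$ by Lemma \ref{SC}\emph{(iii)}.
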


\begin{proof}
 Recall the definition of $g_\ks^\en$ in \eqref{general_formula}. The expression on the right-hand side in Lemma \ref{sec8_summand} does not depend on $C$. With \eqref{sec8_defsc}, a simple computation proves the lemma.
\end{proof}

The inner sum over $E$ in Lemma \ref{sec8_g1} can be handled by the following lemma.
\begin{lemma}
Let $J_1 \mid J$ be nonzero ideals of $\O_\ks$ and let
\[\xi := \sum_{E \unlhd \O_\ks}\frac{\mu_\ks(E)}{\Normk(J\cap E)\cdot \Normk(J_1\cap E)^n}\text.\]
If $J_1 \neq \O_\ks$ then $\xi = 0$. If $J_1 = \O_\ks$ then
\[\xi = \frac{1}{\zeta_\ks(n+1)\Normk(J)}\prod_{P \mid J}\frac{\Normk P^{n+1} - \Normk P}{\Normk P^{n+1} - 1}\text.\]
\end{lemma}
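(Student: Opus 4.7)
The plan is to recognize $\xi$ as an Euler product. Since $\mu_\ks(E)=0$ unless $E$ is squarefree, only squarefree $E$ contribute. For any such $E$ and any prime $P$ of $\O_\ks$ the valuations satisfy $v_P(J\cap E)=\max(v_P(J),v_P(E))$ and similarly for $J_1\cap E$, so both $\Normk(J\cap E)$ and $\Normk(J_1\cap E)$ are multiplicative in $E$. Writing $E=\prod_{P\in S}P$ and summing over all finite sets $S$ of primes, one sees that at each prime $P$ there are two possibilities ($P\in S$ contributing a factor of $-1$ from $\mu_\ks$, or $P\notin S$), so the sum factors as the absolutely convergent Euler product
\[
\xi=\prod_P\left(\Normk(P)^{-v_P(J)-nv_P(J_1)}-\Normk(P)^{-\max(v_P(J),1)-n\max(v_P(J_1),1)}\right).
\]

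The next step is to evaluate each local factor by case analysis. If $P\mid J_1$ then $v_P(J_1),v_P(J)\geq 1$, so the two exponents in the bracket coincide and the local factor vanishes. If $P\mid J$ but $P\nmid J_1$, then $v_P(J)\geq 1$ and $v_P(J_1)=0$, and after factoring out $\Normk(P)^{-v_P(J)}$ the local factor becomes $\Normk(P)^{-v_P(J)}\bigl(1-\Normk(P)^{-n}\bigr)$. Finally, if $P\nmid J$, both valuations vanish and the local factor is $1-\Normk(P)^{-(n+1)}$.

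If $J_1\neq \O_\ks$ some prime $P$ divides $J_1$, forcing the local factor (and hence $\xi$) to be zero. Otherwise $J_1=\O_\ks$, and I multiply together the nontrivial local factors to get
\[
\xi=\prod_P\left(1-\Normk(P)^{-(n+1)}\right)\prod_{P\mid J}\frac{\Normk(P)^{-v_P(J)}\bigl(1-\Normk(P)^{-n}\bigr)}{1-\Normk(P)^{-(n+1)}}.
\]
The first factor is $1/\zeta_\ks(n+1)$ by the Euler product for the Dedekind zeta function. Pulling $\prod_{P\mid J}\Normk(P)^{-v_P(J)}=\Normk(J)^{-1}$ out of the second factor and clearing denominators inside it turns each remaining local factor into $(\Normk P^{n+1}-\Normk P)/(\Normk P^{n+1}-1)$, giving the claimed formula. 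There is no real obstacle; the only point worth checking carefully is the multiplicativity that justifies the Euler product, which is clean once one notes that the valuations are computed pointwise.
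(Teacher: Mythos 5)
Your proof is correct and reaches the same Euler product as the paper, so the overall strategy is essentially the same; the difference is only in how the Euler product is justified. The paper first rewrites $\Normk(J\cap E)=\Normk J\,\Normk E/\Normk(J,E)$ (and likewise for $J_1$), pulls out the constant $\Normk(JJ_1^n)$, and then observes that $f(E):=\mu_\ks(E)\Normk(J,E)\Normk(J_1,E)^n$ is a genuinely multiplicative arithmetic function with $f(\O_\ks)=1$, after which the Euler factorization of the Dirichlet series $\sum f(E)\Normk E^{-(n+1)}$ is automatic. You instead factor the sum over squarefree $E=\prod_{P\in S}P$ by hand, prime by prime, which is fine; but your intermediate remark that ``both $\Normk(J\cap E)$ and $\Normk(J_1\cap E)$ are multiplicative in $E$'' is not literally true (for coprime $E_1,E_2$ one has $\Normk(J\cap E_1E_2)\neq \Normk(J\cap E_1)\Normk(J\cap E_2)$ whenever $J\neq\O_\ks$, and $\Normk(J\cap\O_\ks)=\Normk J\neq1$). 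What \emph{is} true, and what you actually use, is that the full squarefree summand factors over $P$ into local contributions depending only on whether $P\in S$, which is exactly what makes the product rearrangement legitimate. Your local case analysis and the final simplification to $\frac{1}{\zeta_\ks(n+1)\Normk J}\prod_{P\mid J}\frac{\Normk P^{n+1}-\Normk P}{\Normk P^{n+1}-1}$ are correct.
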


\begin{proof}
Let $f(E) := \mu_\ks(E)\cdot\Normk(J, E)\cdot\Normk(J_1, E)^n$. Then $f$ is multiplicative and 
\[\xi = \frac{1}{\Normk (J J_1^n)} \sum_{E \unlhd \O_\ks}\frac{f(E)}{\Normk E^{n+1}}\text.\]
Clearly, this Dirichlet series converges absolutely for all $n>0$. Let us compute its Euler product expansion.
For any prime ideal $P$ of $\O_\ks$, we have $f(P^e) = 0$ if $e \geq 2$. Moreover, $f(\O_\ks) = 1$ and
\[f(P) = \begin{cases}-\Normk P^{n+1}&\text{if $P \mid J_1$,}\\
-\Normk P&\text{if $P \mid J$ and $P \nmid J_1$,}\\
-1&\text{if $P \nmid J$.}\end{cases}\]
We obtain the formal  expansion 
\[\sum_{E \unlhd \O_\ks}\frac{f(E)}{\Normk E^{s}} = \prod_{P \mid J_1}\left(1 - \frac{\Normk P^{n+1}}{\Normk P^s}\right)\prod_{\substack{P \mid J\\P\nmid J_1}}\left(1 - \frac{\Normk P}{\Normk P^{s}}\right)\prod_{P \nmid J}\left(1 - \frac{1}{\Normk P^{s}}\right)\text.\]
Since the infinite product $\prod_{P \nmid J}\left(1 - \Normk P^{-s}\right)$ converges absolutely for $s>1$, we obtain $\xi = 0$ whenever $J_1 \neq \O_\ks$. If $J_1 = \O_\ks$ and $s = n+1$, the expression simplifies to
\[\sum_{E \unlhd \O_\ks}\frac{f(E)}{\Normk E^{n+1}} = \frac{1}{\zeta_\ks(n+1)}\prod_{P \mid J}\frac{\Normk P^{n+1} - \Normk P}{\Normk P^{n+1} - 1}\text.\] 
\end{proof}
Recall the definition of $A$ and $A_1$ from \eqref{sec8_defA} and \eqref{sec8_defA1}. We have $A_1 = \O_\ks$ if and only if $\il A(\theta \O_\KL, \up B) = (\theta \O_\KL,  \il A\up B)$, which is equivalent to $\il A(\theta \O_\KL, \up B) \mid \theta \O_\KL$, or
\begin{equation}\label{rangeforA}
\il A \mid \theta \O_\KL(\theta \O_\KL, \up B)^{-1}\text.
\end{equation}
Recall that, by \eqref{defT}, the set $T$ contains all ideals $\il A$ of $\O_\KL$ with $\il A \mid \theta\O_\KL$. Also, for every $\il A$ with \eqref{rangeforA}, we have $A = \down (\il A(\theta \O_\KL, \up B)) \mid D$. We obtain 
\[g_\ks^\en = c_1 \sum_{B \mid D}\NormK(\theta \O_\KL, \up B)^{\frac{n+1}{[\KL : \ks]}}\sum_{A \mid D}\frac{1}{\Normk A}
\prod_{P \mid A}\frac{\Normk P^{n+1} - \Normk P}{\Normk P^{n+1} - 1}s_0(A, B)\text,\]
where $c_1 := \zeta_\ks(n+1)^{-1} c_0 = h_\ks 2^{s(n+1)}\zeta_\ks(n+1)^{-1}(\sqrt{|\Delta_\ks|})^{-(n+1)}$ and
\[s_0(A, B) := \sum_{\substack{\il A \text{ with \eqref{rangeforA}}\\\down (\il A(\theta\O_\KL, \up B)) = A}}\mu_\KL(\il A)\text.\]
If $s_0(A, B)$ is not zero then there is at least one $\il A$ with 
\[A = \down (\il A(\theta\O_\KL, \up B)) \subseteq \down(\theta \O_\KL, \up B) = B\text.\]
For the last equality, we used Lemma \ref{SC}, \emph{(iii)}. We replace $A$ by $B^{-1}A$ to obtain
\[g_\ks^\en = c_1 \sum_{B \mid D}\frac{\NormK(\theta \O_\KL, \up B)^{\frac{n+1}{[\KL : \ks]}}}{\Normk B}\sum_{A \mid B^{-1}D}\frac{1}{\Normk A}
\prod_{P \mid AB}\frac{\Normk P^{n+1} - \Normk P}{\Normk P^{n+1} - 1}s(A, B)\text,\]
where
\[s(A, B) := \sum_{\substack{\il A \text{ with \eqref{rangeforA}}\\\down (\il A(\theta\O_\KL, \up B)) = AB}}\mu_\KL(\il A)\text.\]

\begin{lemma}
Let $\il J$, $\il K$ be nonzero ideals of $\O_\KL$ and $J$ a nonzero ideal of $\O_\ks$. Then $\down (\il J \il K) = J \down \il K$ if and only if
\begin{equation}\label{divisor_condition}
\il J \mid \up J (\up \down \il K)\il K^{-1}\text{ and } \il J \nmid \up(P^{-1}J)(\up \down \il K)\il K^{-1} \text{ for all prime ideals $P \mid J$.}
\end{equation}
\end{lemma}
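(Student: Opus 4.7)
The plan is to translate everything into valuations and compare term by term. Using the formula $v_P(\down \il A) = \max_{\il P \mid P}\lceil v_{\il P}(\il A)/e_{\il P}\rceil$ recalled in Section \ref{section3}, the desired identity $\down(\il J \il K) = J \down \il K$ becomes the family of scalar equalities
\[
\max_{\il P \mid P}\lceil (v_{\il P}(\il J) + v_{\il P}(\il K))/e_{\il P}\rceil \;=\; v_P(J) + \max_{\il P \mid P}\lceil v_{\il P}(\il K)/e_{\il P}\rceil,
\]
one for each prime $P$ of $\O_\ks$, while the two divisibility conditions translate into inequalities at each prime $\il P$ of $\O_\KL$.

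First I would unpack the first divisibility condition. A direct calculation gives $v_{\il P}(\up J \,(\up\down \il K)\,\il K^{-1}) = e_{\il P}(v_P(J) + v_P(\down \il K)) - v_{\il P}(\il K)$, where $P := \il P \cap \O_\ks$, so $\il J \mid \up J (\up\down \il K)\il K^{-1}$ is equivalent to $v_{\il P}(\il J) + v_{\il P}(\il K) \le e_{\il P}(v_P(J) + v_P(\down \il K))$ for every prime $\il P$. Taking ceilings and then the max over $\il P \mid P$, this is precisely $v_P(\down(\il J \il K)) \le v_P(J) + v_P(\down \il K)$ at every $P$, i.e.\ the inclusion $J \down \il K \subseteq \down(\il J \il K)$. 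At primes $P \nmid J$ the reverse inequality is automatic from $\il J \il K \subseteq \il K$, so the first condition already forces equality at all such $P$.

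It remains to extract the reverse inequality at each $P \mid J$ from the second condition. The analogous computation with $P^{-1}J$ in place of $J$ gives $v_{\il P}(\up(P^{-1}J)(\up\down \il K)\il K^{-1}) = -e_{\il P}[\il P \mid P] + e_{\il P}(v_Q(J) + v_Q(\down \il K)) - v_{\il P}(\il K)$, with $Q := \il P \cap \O_\ks$. For $\il P$ with $Q \neq P$ this equals $v_{\il P}(\up J (\up\down \il K)\il K^{-1})$, so under the first condition no such $\il P$ can witness the non-divisibility. Hence the witness must lie above $P$, giving $v_{\il P}(\il J) + v_{\il P}(\il K) > e_{\il P}(v_P(J) + v_P(\down \il K) - 1)$ for some $\il P \mid P$. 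By the elementary identity $\lceil x/e \rceil \ge y \iff x > e(y-1)$ for positive integers with $e \ge 1$, this is equivalent to $\lceil (v_{\il P}(\il J) + v_{\il P}(\il K))/e_{\il P}\rceil \ge v_P(J) + v_P(\down \il K)$, and taking the max over $\il P \mid P$ yields the desired reverse inequality $v_P(\down(\il J \il K)) \ge v_P(J) + v_P(\down \il K)$.

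The converse direction is obtained by running the same dictionary backwards: assuming $\down(\il J \il K) = J \down \il K$, the first condition is immediate from $\lceil (v_{\il P}(\il J) + v_{\il P}(\il K))/e_{\il P}\rceil \le v_P(\down(\il J \il K)) = v_P(J) + v_P(\down \il K)$ at every $\il P$, and at each $P \mid J$ the maximum in the formula for $v_P(\down(\il J \il K))$ is attained at some $\il P \mid P$, which witnesses the required strict inequality in the second condition. The only step that requires genuine care is the observation, in the forward direction, that the second condition can only be witnessed by a prime of $\O_\KL$ lying above $P$ itself; once that reduction is in place, the rest is a routine juggling of ceilings and maxima.
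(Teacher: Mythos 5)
Your proof is correct, but it takes a genuinely different route from the paper's. The paper argues purely at the level of ideal arithmetic: using $\up(AB) = \up A\,\up B$ together with the elementary fact $\il A \mid \up B \iff \down\il A \mid B$, it rewrites the first condition as $\down(\il J\il K) \mid J\,\down\il K$ and the second, for each prime $P \mid J$, as $\down(\il J\il K) \nmid (P^{-1}J)\,\down\il K$; combined with the observation $\il K \mid \il J\il K$, hence $\down\il K \mid \down(\il J\il K)$, the equivalence with $\down(\il J\il K) = J\,\down\il K$ is then essentially immediate and is left implicit behind a ``Clearly.'' You instead unpack everything into local valuation data via $v_P(\down\il A) = \max_{\il P\mid P}\lceil v_{\il P}(\il A)/e_{\il P}\rceil$, do the ceiling-and-maximum bookkeeping explicitly prime by prime, and isolate as a separate step the key reduction that any witness to the second non-divisibility must lie above $P$ itself. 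The trade-off is clear: your approach is longer but completely self-contained, with every elementary manipulation visible at the level of valuations; the paper's is much shorter, never touches ceilings, and trades explicitness for brevity. Both are sound.
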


\begin{proof}
Clearly,
\[\il J \mid \up J (\up \down \il K)\il K^{-1} \Longleftrightarrow \il J \il K \mid \up(J \down \il K) \Longleftrightarrow \down(\il J \il K) \mid J \down \il K\]
and
\[\il J \nmid \up(P^{-1}J)(\up \down \il K)\il K^{-1} \Longleftrightarrow \il J \il K \nmid \up(P^{-1}J \down \il K) \Longleftrightarrow \down(\il J \il K) \nmid (P^{-1}J)\down \il K\text.\]
\end{proof}

\begin{lemma}If $A \mid B^{-1}D$ then $s(A, B) = \mu_\ks(A)$.
\end{lemma}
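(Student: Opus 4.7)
The plan is to reduce $s(A,B)$ to a purely numerical alternating sum via M\"obius inversion on $\O_\KL$. I would apply the previous lemma with $\il J := \il A$, $\il K := (\theta\O_\KL, \up B)$, and $J := A$; since $\down\il K = B$ by Lemma \ref{SC}, \emph{(iii)}, the condition $\down(\il A\il K) = AB$ is equivalent to
\begin{equation*}
\il A \mid \up A\,\up B\,\il K^{-1} \quad\text{and}\quad \il A \nmid \up(P^{-1}A)\,\up B\,\il K^{-1} \text{ for every prime } P \mid A.
\end{equation*}
Intersecting with \eqref{rangeforA}, I rewrite
\begin{equation*}
s(A,B) = \sum_{\il A \,\mid\, (\theta\O_\KL,\,\up A\up B)\il K^{-1}}\mu_\KL(\il A)\prod_{P \mid A}\bigl(1 - [\il A \mid \up(P^{-1}A)\,\up B\,\il K^{-1}]\bigr).
\end{equation*}

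Expanding the product over $P \mid A$ produces a signed sum over squarefree $Q$ dividing the radical of $A$. A short local computation shows $\gcd_{P \mid Q}\up(P^{-1}A) = \up(Q^{-1}A)$ for squarefree $Q \mid A$, so the combined divisibility constraints on $\il A$ collapse to $\il A \mid \il I_Q \il K^{-1}$, where
\begin{equation*}
\il I_Q := \bigl(\up(Q^{-1}A)\,\up B,\; \theta\O_\KL\bigr).
\end{equation*}
Since $\il K \mid \up B \mid \up(Q^{-1}A)\up B$ and $\il K \mid \theta\O_\KL$, the quotient $\il I_Q\il K^{-1}$ is integral, and the classical identity $\sum_{\il A \mid \il J}\mu_\KL(\il A) = [\il J = \O_\KL]$ collapses the inner sum to an indicator, yielding
\begin{equation*}
s(A,B) = \sum_{Q \mid \mathrm{rad}(A)}(-1)^{\omega(Q)}\bigl[\il I_Q = \il K\bigr].
\end{equation*}

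The central step will then be a prime-by-prime characterization of $\il I_Q = \il K$ in $\O_\KL$. Comparing the minima defining $v_{\il P'}(\il I_Q)$ and $v_{\il P'}(\il K)$ at a prime $\il P'$ above $P_0$, one sees that $\il K \mid \il I_Q$ is automatic and that equality at $\il P'$ can fail only when both $v_{P_0}(A) > v_{P_0}(Q)$ and $v_{\il P'}(\theta\O_\KL) > e_{\il P' \mid P_0}v_{P_0}(B)$. Maximizing the second condition over $\il P' \mid P_0$ translates it into $v_{P_0}(D) > v_{P_0}(B)$, i.e., $P_0 \mid B^{-1}D$. Since $A \mid B^{-1}D$, every $P_0 \mid A$ satisfies $P_0 \mid B^{-1}D$, so the global characterization reads: $\il I_Q = \il K$ if and only if $v_{P_0}(A) = v_{P_0}(Q)$ for every $P_0 \mid A$.

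From here the conclusion is immediate. If $A$ is not squarefree, some $P_0 \mid A$ has $v_{P_0}(A) \geq 2$, which no squarefree $Q$ can match, so every summand vanishes and $s(A,B) = 0 = \mu_\ks(A)$. If $A$ is squarefree, the unique admissible $Q$ is $Q = A$ itself, giving $s(A,B) = (-1)^{\omega(A)} = \mu_\ks(A)$. The main obstacle will be the local bookkeeping in the comparison of $\il I_Q$ with $\il K$, which requires careful treatment of the ramification indices $e_{\il P' \mid P_0}$; the remaining ingredients are routine M\"obius expansions.
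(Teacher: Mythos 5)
Your proof is correct and follows essentially the same route as the paper's: both reduce $s(A,B)$ via the previous lemma to a sum over squarefree divisors of $A$ (your $Q\mid\mathrm{rad}(A)$ matches the paper's $\sum_{F\mid A}\mu_\ks(F)$), evaluate the inner M\"obius sum as the indicator $[\il I_Q=\il K]$, and then invoke $A\mid B^{-1}D$ to conclude. The only cosmetic difference is in the last step, where you carry out an explicit valuation-by-valuation comparison of $\il I_Q$ with $\il K$, whereas the paper argues more compactly via the divisibility chain $F^{-1}A\mid B^{-1}D\mid\down\bigl(\theta\O_\KL(\theta\O_\KL,\up B)^{-1}\bigr)$.
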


\begin{proof}
By Lemma \ref{SC}, \emph{(iii)}, we have $\down (\theta \O_\KL, \up B) = B$. By the previous lemma, $\down (\il A(\theta\O_\KL, \up B)) = AB$ is equivalent to
\begin{equation}\label{divisor_condition_concrete}
\il A \mid \up A\up B(\theta\O_\KL, \up B)^{-1}\text{ and } \il A \nmid \up(P^{-1}A)\up B(\theta\O_\KL, \up B)^{-1} \text{ for all $P \mid A$.}
\end{equation}
Clearly, conditions \eqref{rangeforA} and \eqref{divisor_condition_concrete} imply
\begin{equation}\label{divisor_condition_concrete_1}
\il A \mid (\theta \O_\KL(\theta \O_\KL, \up B)^{-1}, \up A \up B(\theta \O_\KL, \up B)^{-1}) = (\theta \O_\KL(\theta \O_\KL, \up B)^{-1}, \up A)
\end{equation}
and
\begin{equation}\label{divisor_condition_concrete_2}
\il A \nmid \up(P^{-1}A) \text{ for all prime ideals }P \mid A\text.
\end{equation}
In fact, \eqref{rangeforA} and \eqref{divisor_condition_concrete} are equivalent to \eqref{divisor_condition_concrete_1} and \eqref{divisor_condition_concrete_2}. Indeed,  \eqref{divisor_condition_concrete_1} immediately implies \eqref{rangeforA} and the first part of \eqref{divisor_condition_concrete}. For the second part of \eqref{divisor_condition_concrete}, we use that every $\il A \mid \theta \O_\KL(\theta \O_\KL, \up B)^{-1}$ satisfies $(\il A,\up B(\theta\O_\KL, \up B)^{-1}) = \O_\KL$. Thus,
\[s(A, B) = \sum_{\substack{\il A \unlhd \O_\KL\\\eqref{rangeforA}\text{ and }\eqref{divisor_condition_concrete}}}\mu_\KL(\il A) = \sum_{\substack{\il A \unlhd \O_\KL\\\eqref{divisor_condition_concrete_1}\text{ and }\eqref{divisor_condition_concrete_2}}}\mu_\KL(\il A)\text.\]
By inclusion-exclusion for \eqref{divisor_condition_concrete_2}, we obtain
\[s(A, B) = \sum_{F \mid A}\mu_\ks(F)\sum_{\il A \mid (\theta\O_\KL(\theta\O_\KL, \up B)^{-1}, \up(F^{-1}A))}
\mu_\KL(\il A)\text.\]
The last sum is $1$ if $F = A$. Moreover,
\[F^{-1} A \mid B^{-1}D = \down(\theta \O_\KL)(\down(\theta \O_\KL, \up B))^{-1} \mid \down(\theta \O_\KL(\theta \O_\KL, \up B)^{-1})\text,\]
so $F \neq A$ implies that
\[(\theta\O_\KL(\theta\O_\KL, \up B)^{-1}, \up (F^{-1}A)) \neq \O_\KL\text.\]
This shows that the last sum is $0$ whenever $F \neq A$.
\end{proof}
We obtain
\[g_\ks^\en = c_1 \sum_{B \mid D}\frac{\NormK(\theta \O_\KL, \up B)^{(n+1)/[\KL : \ks]}}{\Normk B}
\sum_{A \mid B^{-1}D}\frac{\mu_\ks(A)}{\Normk A}\prod_{P \mid AB}\frac{\Normk P^{n+1} - \Normk P}{\Normk P^{n+1} - 1}\text,\]
and Theorem \ref{Thmthetan} follows by substituting this and \eqref{sec8_volume} in \eqref{sec8_general_formula}.

\section{Proof of Theorem \ref{Thmvartheta}}\label{section9}
In this section we will use not only Landau's $O$-notation but also Vinogradov's symbol $\ll$. 
All implied constants depend solely on $\ks$.
As we will encounter expressions like $\log\log X$ we assume throughout the entire section that $X\geq 3$. 
Our main task will be to prove the following proposition.
\begin{proposition}\label{properrorterm}
Suppose $p\in \mathbf{P}_\ks$. Then, as $X\geq 3$ tends to infinity, we have
\begin{alignat*}1
N(\sqrt{p}\ks^*,X)=\frac{2p^{d/2}}{p^d+1}S_\ks(1)X^{2d}+O\left(\frac{X^{2d-1}}{p^{(d-1)/2}}+X^d\log X+X^d\log p\right). 
\end{alignat*}
\end{proposition}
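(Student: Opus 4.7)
The strategy is to apply Theorem \ref{generalthm} with $\KL=\ks(\sqrt{p})$, $n=1$, $\theta=\sqrt{p}$, and the adelic Lipschitz system $\en$ from Section \ref{section8}. Writing $P:=p\O_\ks$, the inertness of $p$ gives $P\O_\KL=\il{P}^2$ for a unique prime $\il{P}$ of $\O_\KL$, and the main term $\tfrac{2p^{d/2}}{p^d+1}S_\ks(1)X^{2d}$ is provided by the asymptotic (\ref{example2}) deduced from Theorem \ref{Thmtheta1}. The challenge is to pin down the error's $p$-dependence, since a direct application of Theorem \ref{generalthm} gives only the useless bound $O(p^{2d-1}X^{2d-1})$: one has $C_\en^{fin}=C_{\en'}^{inf}=\sqrt{p}$, and thus $A_\en\ll p^{2d-1}$.

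I would revisit Lemmata \ref{SFLip} and \ref{appliedlatticeestimate} with two refinements. First, the generic bound (\ref{SFnormboundallg}) overshoots for our norms $N_v(z_0,z_1)=\max\{|z_0|_v,\sqrt{p}\,|z_1|_v\}$: the condition $N_v\leq T$ forces $|z_0|_v\leq T$ and $|z_1|_v\leq T/\sqrt{p}$, so the Euclidean radius of $S_F(T)$ is actually $O(T)$, not $O(\sqrt{p}\,T)$. Equivalently, the Archimedean coordinate change $(z_0,z_1)\mapsto(z_0,\sqrt{p}\,z_1)$ turns $S_F(T)$ into a standard Schanuel region and transforms the lattice $\Lambda(\il{A}\il{D},CE)=\sigma(J_0\times J_1)$, with $J_0,J_1\unlhd\O_\ks$ extracted from Lemma \ref{Lambdadeterminant}, into $\Lambda':=\sigma(J_0)\times\sqrt{p}\,\sigma(J_1)$. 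Going through the four pairs $(\il{A},\il{D})\in\{\O_\KL,\il{P}\}\times\{\up{C},\up{C}\il{P}\}$ case by case, and using $\up{P}=\il{P}^2$ with $\sigma(PJ)=p\,\sigma(J)$ (valid since $P$ is inert), one finds that $J_0$ and $J_1$ either coincide or differ by one factor of $P$; Lemma \ref{latticeminimaestimates} combined with $\lambda_1(\sigma(J))\asymp\Normk(J)^{1/d}$ then yields
\[\lambda_{d+1}(\Lambda')\gg\sqrt{p}\,\lambda_1(\Lambda')\]
uniformly in every case. Second, I would invoke Lemma \ref{Lemmacountinglatticepts} with $a=d+1$ instead of $a=\Da$, which together with the above ratio converts the per-lattice error into
\[O\!\left(\max\left\{\frac{L^d}{\lambda_1^d},\ \frac{L^{2d-1}}{p^{(d-1)/2}\lambda_1^{2d-1}}\right\}\right).\]

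Substituting the explicit values of $L$ and $\lambda_1$ in each case reduces the per-summand bound to $\max\{X^d/\Normk E,\ X^{2d-1}/(p^{(d-1)/2}\Normk E^{(2d-1)/d})\}$, whose two branches cross at $\Normk E=X^d/p^{d/2}$. Summing over $E$: for $\Normk E\leq X^d/p^{d/2}$ the second branch dominates and the series $\sum_E\Normk E^{-(2d-1)/d}$ (convergent for $d\geq 2$) contributes $O(X^{2d-1}/p^{(d-1)/2})$; for $X^d/p^{d/2}<\Normk E\leq(\kappa X)^d/\nb\ll p^dX^d$ the first branch yields $O(X^d\log(\max(X,p)^d))=O(X^d\log X+X^d\log p)$. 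The finitely many choices of $\il{A}$, $\il{D}$, and $C\in\mathcal{R}$ absorb only into constants depending on $\ks$. The main obstacle is the case-by-case verification of the asymmetry $\lambda_{d+1}(\Lambda')/\lambda_1(\Lambda')\gg\sqrt{p}$ across the four $(\il{A},\il{D})$ pairs, together with a separate treatment of the atypical subcase $P\mid E$ (where $J_0=J_1$ and the standard Schanuel estimate gives a smaller contribution).
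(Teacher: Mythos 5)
Your proposal is correct and follows essentially the same route as the paper: the coordinate change $(z_0,z_1)\mapsto(z_0,\sqrt{p}\,z_1)$ is, up to a harmless overall scaling by $p^{-1/4}$, exactly the determinant-one automorphism $\Phi(\vz_0,\vz_1)=(p^{-1/4}\vz_0,p^{1/4}\vz_1)$ the paper uses in \eqref{Phi}, and your use of Lemma~\ref{latticeminimaestimates} for the product lattice together with Lemma~\ref{Lemmacountinglatticepts} at $a=d+1$ reproduces Lemmata~\ref{appliedminimaestimates}, \ref{LipBall} and \ref{mainlemma2}. The only detail you leave tacit --- the extra factor of $p^{d/2}$ resp.\ $p^{d-1/2}$ in the per-summand error when $p\O_\ks\mid E$, which is then absorbed using $\Normk E\geq p^d$ --- is precisely how the paper handles the subcase you flag, so the argument goes through as you indicate.
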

We choose the adelic Lipschitz system $\en$ (of dimension $1$) on $\KL:=\ks(\sqrt{p})$, defined by 
\begin{alignat*}1
N_w((z_0,z_1)):=\max\{|z_0|_w,|\sqrt{p}|_w|z_1|_w\}
\end{alignat*}
for any place $w$ of $\KL$. Recall the definition of $C_\en^{fin}$ and $C_\en^{inf}$ from (\ref{defcfin}) and (\ref{defcinf}), and note that we can take 
\begin{alignat}1\label{Cenhere}
C_\en^{fin}=C_\en^{inf}=\sqrt{p}.
\end{alignat}
The adelic Lipschitz system $\en$ on $\KL$ leads to an adelic Lipschitz system $\en'$ on $\ks$ as in Section \ref{section6}.
Note that for any Archimedean $v$ from $\ks$ and $N_v$ from $\en'$ we have $N_v((z_0,z_1))=\max\{|z_0|_v,\sqrt{p}|z_1|_v\}$.
Thus we can also take
\begin{alignat}1\label{Cen'here}
C_{\en'}^{inf}=\sqrt{p}.
\end{alignat}

\begin{lemma}\label{onetoone}
We have
\begin{alignat*}1
N(\sqrt{p}\ks^*,X)=N_\en(\P^1(k);X)-2. 
\end{alignat*} 
\end{lemma}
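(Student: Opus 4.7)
The plan is to set up the bijection $\P^1(\ks) \leftrightarrow \ks \sqcup \{\infty\}$ given by $\alpha \mapsto (1:\alpha)$ and $\infty \mapsto (0:1)$, and check that under the chosen adelic Lipschitz system $\en$ the height $\Hen$ of such a point matches $H(\sqrt{p}\alpha)$ on the affine part, while the point at infinity has height $1$.

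First I would record the following computation: for $P=(1:\alpha)$ with $\alpha\in\ks$, the defining recipe for $N_w$ gives
\[
\Hen(P)=\prod_{w\in M_\KL}\max\{1,|\sqrt{p}|_w|\alpha|_w\}^{d_w/[\KL:\Q]}=\prod_{w\in M_\KL}\max\{1,|\sqrt{p}\alpha|_w\}^{d_w/[\KL:\Q]}=H(\sqrt{p}\alpha).
\]
Since $\sqrt{p}\notin \ks$, multiplication by $\sqrt{p}$ is a height-preserving bijection from $\ks^*$ onto $\sqrt{p}\ks^*$, so the subset of $\P^1(\ks)$ consisting of points $(1:\alpha)$ with $\alpha\in\ks^*$ and $\Hen(P)\leq X$ has cardinality exactly $N(\sqrt{p}\ks^*,X)$.

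Next I would account for the two exceptional points. For $(1:0)$ one has $\Hen((1:0))=1\leq X$, contributing $1$ (this is the $\alpha=0$ case). For $(0:1)$ the product formula yields
\[
\Hen((0:1))=\prod_{w\in M_\KL}|\sqrt{p}|_w^{d_w/[\KL:\Q]}=1\leq X,
\]
contributing a further $1$. Summing the three contributions (the main bijection plus the two exceptional points) gives
\[
N_\en(\P^1(\ks);X)=N(\sqrt{p}\ks^*,X)+2,
\]
which is the claimed identity. There is no real obstacle here; the only point to be careful about is the use of the product formula to observe that the point at infinity always has height exactly $1$, so it is counted for every $X\geq 1$.
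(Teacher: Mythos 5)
Your proof is correct and follows essentially the same route as the paper: the bijection $\alpha\mapsto(1:\alpha)$ between $\ks^*$ and $\P^1(\ks)\setminus\{(1:0),(0:1)\}$, the identity $\Hen((1:\alpha))=H(\sqrt p\,\alpha)$, and the observation (via the product formula for $(0:1)$) that both exceptional points have height exactly $1$.
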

\begin{proof}
The map $\alpha\mapsto (1 : \alpha)$ is a one-to-one correspondence between $\ks^*$ and $\P^1(k)\backslash\{(0:1),(1:0)\}$
Moreover, $H(\sqrt{p}\alpha)=\hen((1 : \alpha))$. Hence
there is a one-to-one correspondence between $\{\alpha\in \ks^*: H(\sqrt{p}\alpha)\leq X\}$ and $\{P\in\P^1(k)\backslash\{(0:1),(1:0)\}:\Hen(P)\leq X\}$.
As $\hen((0:1))=\hen((1:0))=1$ the claim follows.
\end{proof}
We can now basically follow the proof of Theorem \ref{generalthm} using our specific adelic Lipschitz system. 
However, to get the good error terms regarding $p$ an additional idea is required.
We will use  the same notation as in Sections \ref{section6} and \ref{section7}. In particular, recall the definition of the set $S_F(\TE)$ introduced in (\ref{inkl1}). As in \eqref{ideal_reps_rel_prime}, we choose a system $\mathcal{R}$ of integral representatives for $\Cl_\ks$ such that $(C,p\O_\ks)=\O_\ks$ for all $C\in \mathcal{R}$.

\begin{lemma}\label{sctwoelements}
We can choose $S_C:=\{\up C,\sqrt{p}\up C\}$.
\end{lemma}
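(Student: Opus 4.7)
The plan is to unpack the definition of $\iN(\bbalf)$ for the adelic Lipschitz system $\en$ in question and then analyze its valuation at each prime of $\O_\KL$. By the definition of $\iN$, and since $N_w((z_0,z_1)) = \max\{|z_0|_w, |\sqrt{p}|_w|z_1|_w\}$ at every finite place, we have
\[
\iN(\bbalf) = \balpha_0\O_\KL + \sqrt{p}\,\balpha_1\O_\KL.
\]
Also, because $\O_\ks(\bbalf) = C$, we get $\up C = \balpha_0\O_\KL + \balpha_1\O_\KL$. So the question is to compare $v_{\il P}(\iN(\bbalf))$ with $v_{\il P}(\up C)$ at each prime $\il P$ of $\O_\KL$.

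Next I would record the factorization of $p$ in $\KL$. Since $p$ is inert in $\ks$, $p\O_\ks$ is a prime ideal, and in the degree $2$ extension $\KL=\ks(\sqrt{p})$ the relation $(\sqrt{p}\,\O_\KL)^2 = p\O_\KL$ immediately rules out the split and inert factorization types (neither admits a square root in the ideal group), forcing $p\O_\KL = \mathfrak{P}^2$ with $e(\mathfrak{P}|p)=2$, $f(\mathfrak{P}|p)=1$, and $\sqrt{p}\,\O_\KL = \mathfrak{P}$.

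With this in hand, the analysis splits into cases. Our choice of $\mathcal{R}$ imposes $(C, p\O_\ks)=\O_\ks$, so $v_\mathfrak{P}(\up C) = 0$ and $\min\{v_p(\balpha_0), v_p(\balpha_1)\} = v_p(C) = 0$. At any prime $\il P \ne \mathfrak{P}$ we have $v_{\il P}(\sqrt{p}) = 0$, hence
\[
v_{\il P}(\iN(\bbalf)) = \min\{v_{\il P}(\balpha_0), v_{\il P}(\balpha_1)\} = v_{\il P}(\up C).
\]
At $\mathfrak{P}$, using $v_\mathfrak{P}(\balpha_i) = 2 v_p(\balpha_i)$ and $v_\mathfrak{P}(\sqrt{p}) = 1$, one gets either $v_\mathfrak{P}(\iN(\bbalf)) = 0$ (when $v_p(\balpha_0) = 0$) or $v_\mathfrak{P}(\iN(\bbalf)) = 1$ (when $v_p(\balpha_0) \geq 1$, forcing $v_p(\balpha_1) = 0$ and $v_\mathfrak{P}(\sqrt{p}\balpha_1) = 1 < v_\mathfrak{P}(\balpha_0)$).

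Combining, $\iN(\bbalf)$ equals either $\up C$ or $\mathfrak{P}\up C = \sqrt{p}\,\up C$, so $S_C := \{\up C,\sqrt{p}\,\up C\}$ covers every possibility. The only genuinely non-formal step is verifying ramification of $p$ in $\KL/\ks$; once that is in hand the rest is a routine local computation.
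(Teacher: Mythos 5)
Your proof is correct, and it rests on the same key observations as the paper's: $\iN(\bbalf)=\balpha_0\O_\KL + \sqrt{p}\balpha_1\O_\KL$ and the primality of $\sqrt{p}\O_\KL$. The paper, however, finishes more economically: since $\O_\ks(\bbalf)=C$ gives the sandwich $\sqrt{p}\up C\subseteq \iN(\bbalf)\subseteq \up C$, and $\sqrt{p}\O_\KL$ is prime, the only ideals between $\sqrt{p}\up C$ and $\up C$ are the two endpoints. This sandwich argument has two advantages over your prime-by-prime valuation computation. First, it does not need the coprimality assumption $(C,p\O_\ks)=\O_\ks$ at all; that hypothesis, though in force in Section 9, plays no role in this lemma, and invoking it makes your argument look less general than the statement actually is. Second, it avoids the case distinction at $\mathfrak{P}$. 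On the other hand, you explicitly verify that $p$ ramifies in $\KL/\ks$ and hence that $\sqrt{p}\O_\KL$ is prime; the paper simply asserts this, so your write-up is more self-contained on that point. Both routes are valid, but the sandwich argument is the one to internalize, since it immediately generalizes: between $\mathfrak{P}^m\up C$ and $\up C$ lie exactly the $m+1$ ideals $\mathfrak{P}^j\up C$.
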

\begin{proof}
As in (\ref{ientheta}) we have $\iN(\bbalf)=\balpha_0\O_\KL + \sqrt{p}\balpha_1\O_\KL$. So if $\O_\ks(\bbalf)=C$ we get $\sqrt{p}\up C\subseteq \iN(\bbalf)\subseteq \up C$.
As $\sqrt{p}\O_\KL$ is a prime ideal this proves the lemma.
\end{proof}
With this choice of the sets $S_C$ we directly verify that $\nb$ from (\ref{nb}) can be chosen to be
\begin{alignat}1\label{nbp}
\nb:=p^{-d/2}.
\end{alignat}
From now on $C$ is always in $\mathcal{R}$, $\il D$ is always in $S_C$, and $\A$ will always be in $T$.
\begin{lemma}\label{setcardbounds}
We can choose $T$ such that $|T|\leq 2$. 
\end{lemma}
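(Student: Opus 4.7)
The plan is to directly enumerate which $\il A\lhd\O_\KL$ can possibly make $\Lambda_C(\il A\il D)$ nonempty, using the extremely rigid structure forced by $p$ being inert in $\ks$. Since $p$ is inert, $\sqrt{p}\O_\KL$ is the unique prime of $\O_\KL$ above $p$, and $p\O_\KL=(\sqrt{p}\O_\KL)^2$. By Lemma~\ref{sctwoelements} the only $\il D$ that need be considered are $\il D=\up C$ and $\il D=\sqrt{p}\up C$. Our task then reduces to showing
\[
T_{C,\up C}\subseteq\{\O_\KL,\sqrt{p}\O_\KL\}\quad\text{and}\quad T_{C,\sqrt{p}\up C}\subseteq\{\O_\KL\},
\]
so that $T:=\{\O_\KL,\sqrt{p}\O_\KL\}$ is a valid choice with $|T|\le 2$.

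The first step is the observation underlying Lemma~\ref{sctwoelements}: for any $\bbalf\in\ks^2$ with $\O_\ks(\bbalf)=C$, we have $\iN(\bbalf)=\balpha_0\O_\KL+\sqrt{p}\balpha_1\O_\KL$, and the sandwich $\sqrt{p}\up C\subseteq\iN(\bbalf)\subseteq\up C$ together with the fact that $\sqrt{p}\O_\KL$ is prime forces $\iN(\bbalf)\in\{\up C,\sqrt{p}\up C\}$. Thus $\Lambda_C(\il A\il D)\neq\emptyset$ exactly when one of these two ideals is contained in $\il A\il D$.

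Now I would run through the four cases. If $\il D=\up C$ and $\iN(\bbalf)=\up C$, then $\il A\up C\supseteq\up C$ with $\il A\lhd\O_\KL$ forces $\il A=\O_\KL$; if $\iN(\bbalf)=\sqrt{p}\up C$, then $\il A\supseteq\sqrt{p}\O_\KL$ together with $\il A\subseteq\O_\KL$ gives $\il A\in\{\O_\KL,\sqrt{p}\O_\KL\}$. If $\il D=\sqrt{p}\up C$ and $\iN(\bbalf)=\up C$, we would need $\il A\supseteq\sqrt{p}^{-1}\O_\KL$, which is incompatible with $\il A\subseteq\O_\KL$, so this case is empty; if $\iN(\bbalf)=\sqrt{p}\up C$, we get $\il A=\O_\KL$. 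Combining these four cases yields the claimed inclusions, and the lemma follows.

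There is no real obstacle here: the argument is a short case analysis once one exploits the two crucial facts that $\sqrt{p}\O_\KL$ is the only prime over $p$ and that ideals of $\O_\KL$ lie between $\O_\KL$ and $\{0\}$. The only mild subtlety is remembering that $\il A$ is required to be an integral ideal, which is what rules out the degenerate case $\il D=\sqrt{p}\up C$ with $\iN(\bbalf)=\up C$.
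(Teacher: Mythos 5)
Your proof is correct and follows essentially the same route as the paper: both reduce to Lemma~\ref{sctwoelements}'s description $S_C=\{\up C,\sqrt{p}\up C\}$ together with the primality of $\sqrt{p}\O_\KL$, then conclude via a short divisibility argument. Your version spells out the four cases explicitly where the paper compresses them into one line, but the underlying reasoning is identical.
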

\begin{proof}
Recall that we may choose $T=\cup_{C\in \mathcal{R}}\cup_{\il D\in S_C}T_{C, \il D}$.
By definition we have 
\begin{alignat*}1
T_{C, \il D}&=\{\B\unlhd\O_\KL: \Lambda_C(\il D \il B)\neq \emptyset\}\\
&=\{\B\unlhd \O_\KL: \Lambda_C^*(\il E\il D \il B)\neq \emptyset \text{ for some }\il E\unlhd \O_\KL\}\\
&\subseteq\{\B\unlhd \O_\KL:\il E\il D \il B\in S_C\text{ for some }\il E\unlhd \O_\KL\}.
\end{alignat*}
Now using that 
$S_C=\{\up C,\sqrt{p}\up C\}$ and that $\sqrt{p}\O_\KL$ is a prime ideal we see that 
$T_{C, \il D}\subseteq \{\O_\KL, \sqrt{p}\O_\KL\}$ for any $\il D\in S_C$.
Thus $|T|= |\cup_{C\in \mathcal{R}}\cup_{\il D\in S_C}T_{C, \il D}|\leq 2$.
\end{proof}

\begin{lemma}\label{latticeinclusion}
Let $\sigma$ be as in (\ref{sigd}).
We have 
\begin{alignat*}1
\Lambda(\il A \il D, CE)\subseteq \sigma(CE)\times\sigma(CE). 
\end{alignat*}
Moreover, if $\il D=\sqrt{p}\up C$ then we have
\begin{alignat*}1
\Lambda(\il A \il D, CE)\subseteq \sigma\left(CE p(CE,p\O_\ks)^{-1}\right)\times\sigma(CE).
\end{alignat*}
\end{lemma}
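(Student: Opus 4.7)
The first inclusion is tautological: by definition $\Lambda(\il A\il D, CE) = \Lambda(\il A\il D) \cap \sigma((CE)^2)$, which is contained in $\sigma(CE) \times \sigma(CE)$. So my plan concerns only the second inclusion, under the assumption $\il D = \sqrt{p}\up C$.

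I would fix $\sigma\bbalf \in \Lambda(\il A\il D, CE)$ with $\bbalf = (\balpha_0, \balpha_1) \in (CE)^2$. By Lemma \ref{setcardbounds}, $\il A \in \{\O_\KL, \sqrt{p}\O_\KL\}$, and in both cases $\il A\il D \subseteq \sqrt{p}\up C$. The containment $\iN(\bbalf) = \balpha_0\O_\KL + \sqrt{p}\balpha_1\O_\KL \subseteq \sqrt{p}\up C$ then forces $\balpha_0 \in \sqrt{p}\up C$ and $\balpha_1 \in \up C$; but $\balpha_1 \in \up C \cap \ks = \down\up C = C$ is already implied by $\balpha_1 \in CE$, so the real content sits in the $\balpha_0$ coordinate, and the inclusion $\sigma\bbalf \in \sigma(CE)$ for the second factor is automatic.

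The key computation is then $\down(\sqrt{p}\up C) = pC$. Since $p$ is inert in $\ks$ and the class-group representatives were chosen to satisfy $(C, p\O_\ks) = \O_\ks$, and since $p$ ramifies totally in $\KL/\ks$ with $p\O_\KL = (\sqrt{p}\O_\KL)^2$, the formula $v_P(\down \il B) = \max_{\il P\mid P}\lceil v_{\il P}(\il B)/e_{\il P}\rceil$ yields valuation $\lceil 1/2\rceil = 1$ at $P = p\O_\ks$ and $v_P(C)$ at every other prime. Hence $\balpha_0 \in pC$, and a prime-by-prime valuation comparison gives $pC \cap CE = C\cdot\lcm(p\O_\ks, E) = CE \cdot p (E, p\O_\ks)^{-1}$. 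Using $(C, p\O_\ks) = \O_\ks$ to rewrite $(E, p\O_\ks) = (CE, p\O_\ks)$, this is exactly the claimed membership $\balpha_0 \in CEp(CE, p\O_\ks)^{-1}$.

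There is no real obstacle here; the argument is pure ideal-theoretic bookkeeping anchored on the single nontrivial identity $\down(\sqrt{p}\up C) = pC$, which itself is a direct consequence of total ramification of $p$ in $\KL/\ks$ together with the coprimality built into the choice of $\mathcal{R}$.
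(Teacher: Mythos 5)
Your proof is correct and follows essentially the same route as the paper: in both cases the crux is that $\balpha_0 \in \sqrt{p}\,\up C$ together with $\balpha_0 \in \ks$ forces $p\O_\ks \mid \balpha_0\O_\ks$ (the paper states this directly from the ramification of $p\O_\ks$; you encode it as $\down(\sqrt{p}\,\up C)=pC$), after which the intersection with $CE$ is straightforward ideal arithmetic. One small remark: your appeal to Lemma \ref{setcardbounds} to constrain $\il A$ is unnecessary — any integral $\il A$ satisfies $\il A\il D \subseteq \il D = \sqrt{p}\,\up C$, which is all you use.
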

\begin{proof}
The first assertion is clear from the definition. 
For the second assertion 
we could use the last equality in the proof of Lemma \ref{Lambdadeterminant}, but we
prefer to give a direct argument here.
Note that $\sigma\bbalf\in \Lambda(\il A \il D)$
implies $\il D\mid \iN(\bbalf)=(\balpha_0\O_\KL,\sqrt{p}\balpha_1\O_\KL)$.
As $\il D=\sqrt{p}\up C$ we conclude $\sqrt{p}\O_\KL\mid \balpha_0\O_\KL$,
and thus $p\O_\ks\mid \balpha_0\O_\ks$. Therefore $\balpha_0\in CE \cap p\O_\ks$.
This proves the second assertion.
\end{proof}
Next we use a trick, simpler but reminiscent of those used in \cite[Section 6]{Widmerintpts}.   
To this end we introduce a linear automorphism $\Phi$ of determinant $1$ on $\left(\IR^r\times\IC^s\right)^2$ by
\begin{alignat}3
\label{Phi}
\Phi(\vz_0,\vz_1):=(p^{-1/4}\vz_0,p^{1/4}\vz_1).
\end{alignat}
\begin{lemma}\label{appliedminimaestimates}
Write $\Lambda:=\Lambda(\il A \il D, CE)$. If $\il D=\up C$ then we have
\begin{alignat*}1
\lambda_1(\Phi\Lambda)&\geq p^{-1/4}\Normk(CE)^{1/d},\\
\lambda_{d+1}(\Phi\Lambda)&\geq p^{1/4}\Normk(CE)^{1/d}. 
\end{alignat*}
If $\il D=\sqrt{p}\up C$ then we have
\begin{alignat*}1
\lambda_1(\Phi\Lambda)&\geq \begin{cases}p^{-1/4}\Normk(CE)^{1/d} &\text{ if }p\O_\ks\mid E,  \\
                                        p^{1/4}\Normk(CE)^{1/d} &\text{ if }p\O_\ks\nmid E. 
                           \end{cases}\\
\lambda_{d+1}(\Phi\Lambda)&\geq\begin{cases}p^{1/4}\Normk(CE)^{1/d} &\text{ if }p\O_\ks\mid E,  \\
                                        p^{3/4}\Normk(CE)^{1/d} &\text{ if }p\O_\ks\nmid E.
                           \end{cases}
\end{alignat*}
\end{lemma}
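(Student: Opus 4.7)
The plan is to combine the two inclusions from Lemma \ref{latticeinclusion} with Lemma \ref{latticeminimaestimates} for product lattices and the standard lower bound $\lambda_1(\sigma(J)) \geq \Normk(J)^{1/d}$ for nonzero ideals $J \unlhd \O_\ks$ (see e.g.\ \cite[Lemma 5]{1}). Throughout, I will use the monotonicity of successive minima under inclusion: if $\Lambda_1 \subseteq \Lambda_2$ then $\lambda_i(\Lambda_1) \geq \lambda_i(\Lambda_2)$.

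First I would handle the case $\il D = \up C$. The first inclusion of Lemma \ref{latticeinclusion} gives $\Lambda \subseteq \sigma(CE) \times \sigma(CE)$, whence
\[
\Phi\Lambda \subseteq p^{-1/4}\sigma(CE) \times p^{1/4}\sigma(CE).
\]
By Lemma \ref{latticeminimaestimates} applied to the product lattice on the right, and the bound $\lambda_1(\sigma(CE)) \geq \Normk(CE)^{1/d}$, we obtain
\[
\lambda_1(\Phi\Lambda) \geq p^{-1/4}\Normk(CE)^{1/d}, \qquad \lambda_{d+1}(\Phi\Lambda) \geq p^{1/4}\Normk(CE)^{1/d},
\]
which is precisely the desired estimate.

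Next I would treat $\il D = \sqrt{p}\up C$. When $p\O_\ks \mid E$, the factor $(E, p\O_\ks)^{-1}$ in the second inclusion of Lemma \ref{latticeinclusion} equals $p^{-1}\O_\ks$, so that inclusion collapses to the first one, and the argument above yields the bounds $p^{-1/4}\Normk(CE)^{1/d}$ and $p^{1/4}\Normk(CE)^{1/d}$ as required. When $p\O_\ks \nmid E$, we have $(E,p\O_\ks) = \O_\ks$ and the second inclusion reads $\Lambda \subseteq \sigma(CEp) \times \sigma(CE)$, hence
\[
\Phi\Lambda \subseteq p^{-1/4}\sigma(CEp) \times p^{1/4}\sigma(CE).
\]
Using $\lambda_1(\sigma(CEp)) \geq p \cdot \Normk(CE)^{1/d}$ and $\lambda_1(\sigma(CE)) \geq \Normk(CE)^{1/d}$, the first minimum of the first factor is at least $p^{3/4}\Normk(CE)^{1/d}$ and that of the second is at least $p^{1/4}\Normk(CE)^{1/d}$. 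Applying Lemma \ref{latticeminimaestimates} then gives
\[
\lambda_1(\Phi\Lambda) \geq p^{1/4}\Normk(CE)^{1/d}, \qquad \lambda_{d+1}(\Phi\Lambda) \geq p^{3/4}\Normk(CE)^{1/d}.
\]

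There is no genuine obstacle here beyond carefully tracking the effect of $\Phi$ on each coordinate factor and correctly identifying which inclusion is useful in which case; the work of relating arithmetic constraints on $(\balpha_0,\balpha_1)$ to the lattice $\Lambda$ has already been absorbed into Lemma \ref{latticeinclusion}. The design of $\Phi$ (with determinant $1$ and scaling factors $p^{\pm 1/4}$) is precisely what balances the two summands so that the extra $p$ in the first factor in the sub-case $p\O_\ks \nmid E$ is distributed evenly, producing the sharp exponent $p^{1/4}$ for $\lambda_1$ and $p^{3/4}$ for $\lambda_{d+1}$.
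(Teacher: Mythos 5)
Your proposal is correct and follows essentially the same route as the paper: apply Lemma \ref{latticeinclusion} to embed $\Phi\Lambda$ in a product $\Lambda_1\times\Lambda_2$ of scaled lattices $p^{\mp 1/4}\sigma(\cdot)$, then combine Lemma \ref{latticeminimaestimates} with the bound $\lambda_1(\sigma J)\geq\Normk J^{1/d}$ and monotonicity of successive minima under inclusion. The only cosmetic slip is writing $(E,p\O_\ks)$ where the paper has $(CE,p\O_\ks)$, but since $(C,p\O_\ks)=\O_\ks$ by the choice of $\mathcal{R}$ these coincide, so the argument is unaffected.
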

\begin{proof}
By Lemma \ref{latticeinclusion} we have $\Phi\Lambda\subseteq \Lambda_1\times \Lambda_2$, where $\Lambda_2:=p^{1/4}\sigma(CE)$
and $\Lambda_1$ is $p^{-1/4}\sigma(CE)$ if $\il D=\up C$ and $p^{-1/4}\sigma\left(CE p(CE,p\O_\ks)^{-1}\right)$ if  $\il D=\sqrt{p}\up C$. 
Recall the fact (already used in Lemma \ref{appliedlatticeestimate})
that $\lambda_1(\sigma A)\geq \Normk A^{1/d}$ for any nonzero ideal $A$ of $\ks$. Using this and applying Lemma \ref{latticeminimaestimates}
the result follows from an easy computation.
\end{proof}

\begin{lemma}\label{LipBall}
There exist constants $c_1=c_1(\ks)$ and $M=M(\ks)$ depending solely on $\ks$ such that, with $L=c_1p^{-1/4}\TE$,
we have $\Phi S_F(\TE)\subseteq B_0(L)$ and the boundary $\partial\Phi S_F(\TE)\in \Lip(2d,M,L)$. 
\end{lemma}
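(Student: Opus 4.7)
The plan is to avoid applying Lemma~\ref{SFLip} directly to our system $\en'$, since $C_{\en'}^{inf}=\sqrt{p}$ would produce a Lipschitz estimate of size $\sqrt p\,\TE$, which after composition with $\Phi$ (whose operator norm is $p^{1/4}$) would yield $p^{3/4}\TE$ instead of the desired $p^{-1/4}\TE$. Instead, I will identify $\Phi S_F(\TE)$ with a rescaling of the ``standard'' $S_F$-set attached to the max-norm adelic Lipschitz system on $\ks$, whose Lipschitz constants depend only on $\ks$.

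Concretely, let $\en^{\max}$ denote the adelic Lipschitz system on $\ks$ whose Archimedean functions are $N_v^{\max}(z_0,z_1)=\max\{|z_0|_v,|z_1|_v\}$. Its unit sets are cubes, and the quantities $L_{\en^{\max}}$, $M_{\en^{\max}}$, $C_{\en^{\max}}^{inf}$ are all bounded by a constant depending only on $\ks$. First I will do the following elementary computation: for $\vw=\Phi\vz$, the identity
\[
N_v\bigl((\Phi^{-1}\vw)_0,(\Phi^{-1}\vw)_1\bigr)=\max\{p^{1/4}|w_0|_v,\sqrt{p}\cdot p^{-1/4}|w_1|_v\}=p^{1/4}N_v^{\max}(\vw)
\]
holds, so the defining condition $(N_i(\vz_i)^{d_i})_i\in\exp F(\TE)$ becomes $(N_i^{\max}(\vw_i)^{d_i})_i\in\exp\bigl(F(\TE)-\tfrac{\log p}{4}\vdelta\bigr)=\exp F(p^{-1/4}\TE)$. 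By the homogeneity \eqref{homexp} of $S_F$ for $\en^{\max}$, this gives the identity
\[
\Phi S_F(\TE)\;=\;S_F^{\en^{\max}}(p^{-1/4}\TE)\;=\;p^{-1/4}\,S_F^{\en^{\max}}(\TE).
\]

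With this identity in hand, I apply Lemma~\ref{SFLip} to the system $\en^{\max}$ to obtain constants $M'=M'(\ks)$ and $L'=L'(\ks)$ such that $\partial S_F^{\en^{\max}}(\TE)\in\Lip(2d,M',L'\TE)$ and $S_F^{\en^{\max}}(\TE)\subseteq B_0(L'\TE)$. Scaling by $p^{-1/4}$ preserves the number of parameterizing maps and scales both the Lipschitz constant and the ball radius by $p^{-1/4}$, so with $c_1:=L'$ and $M:=M'$ we conclude
\[
\partial\Phi S_F(\TE)=p^{-1/4}\,\partial S_F^{\en^{\max}}(\TE)\in\Lip(2d,M,c_1 p^{-1/4}\TE),\qquad\Phi S_F(\TE)\subseteq B_0(c_1p^{-1/4}\TE).
\]

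The main conceptual step is the first one: recognizing that $\Phi$ is precisely the transformation that converts the ``stretched box'' unit set of $\en'$ into a $p^{-1/4}$-scaling of the unit cube, so that the gain $p^{-1/4}$ comes uniformly from the Archimedean factors and does not get spoiled by the $c_v^{-1}=\sqrt p$ appearing in Lemma~\ref{SFLip} for $\en'$. Once this is observed, the rest of the argument is just bookkeeping under a linear rescaling.
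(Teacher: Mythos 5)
Your proof is correct and follows essentially the same route as the paper: both identify $\Phi S_F(\TE)$ with the $p^{-1/4}$-dilate of the $S_F$-set attached to the standard max-norm system on $\ks$, then invoke Lemma~\ref{SFLip} (together with (\ref{SFnormbound}), (\ref{kappa})) for that standard system, whose Lipschitz data depend only on $\ks$. You have merely spelled out the change-of-variables computation that the paper states in one sentence.
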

\begin{proof}
The adelic Lipschitz system $\en$ on $\KL$ leads to an adelic Lipschitz system $\en'$ on $\ks$ as in Section \ref{section6}. The latter is used to define $S_F(\TE)$. 

Now notice that applying $\Phi$ to  $S_F(\TE)$ gives the same as defining $S_F(\TE)$ using the
standard adelic Lipschitz system defined by $N_v(z_0,z_1)=\max\{|z_0|_v,|z_1|_v\}$ for all $v$ and then homogeneously
shrinking this set by the factor $p^{-1/4}$. The claims then follow immediately from Lemma \ref{SFLip}, (\ref{kappa}),
and (\ref{SFnormbound}) applied to the standard adelic Lipschitz system. 
\end{proof}

\begin{lemma}\label{mainlemma2}
Let $\Error_1:=X^d/\Normk(E)$, and let $\Error_2:=X^{2d-1}/(p^{(d-1)/2}\Normk(E)^{2-1/d})$. Then we have 
\begin{alignat*}1
|\Lambda(\il A \il D, CE) \cap S_F(X \NormK\il D^{1/(2d)})|=
&\frac{\Vol S_F(1)\NormK\il DX^{2d}}{\det \Lambda(\il A \il D, CE)}\\
+&O\left(\begin{cases}
          \Error_1+\Error_2 &\text{ if }p\O_\ks\nmid E\\
          p^{d/2}\Error_1+p^{d-1/2}\Error_2 &\text{ if }p\O_\ks\mid E
         \end{cases}
\right).
\end{alignat*}
Moreover, there is a constant $\ga=\ga(\ks)\geq 1$ depending only on $\ks$, such that 
$|\Lambda(\il A \il D, CE) \cap S_F(X \NormK\il D^{1/(2d)})|=0$ whenever $\Normk E>(\ga p X)^d$.
\end{lemma}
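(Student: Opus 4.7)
My plan is to reduce to the counting lemma (Lemma \ref{Lemmacountinglatticepts}) after applying the volume-preserving linear map $\Phi$ from \eqref{Phi}. Since $\det\Phi=1$, we have
\[
|\Lambda(\il A\il D,CE)\cap S_F(X\NormK\il D^{1/(2d)})|=|\Phi\Lambda(\il A\il D,CE)\cap \Phi S_F(X\NormK\il D^{1/(2d)})|,
\]
and the volume of $\Phi S_F$ equals the volume of $S_F$, which by homogeneity \eqref{homexp} and \eqref{SFVol} (with $n=1$) equals $\Vol S_F(1)\cdot (X\NormK\il D^{1/(2d)})^{2d}=\Vol S_F(1)\NormK\il D\,X^{2d}$. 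This immediately produces the main term once we divide by $\det\Lambda(\il A\il D,CE)=\det\Phi\Lambda(\il A\il D,CE)$.

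For the error term, I would apply Lemma \ref{Lemmacountinglatticepts} in dimension $\Da=2d$ with the parameter choice $a=d+1$, using Lemma \ref{LipBall} to control $M$ and $L=c_1p^{-1/4}X\NormK\il D^{1/(2d)}$, and Lemma \ref{appliedminimaestimates} to bound $\lambda_1(\Phi\Lambda)$ and $\lambda_{d+1}(\Phi\Lambda)$. The error is then
\[
\ll M\max\Bigl\{\frac{L^{d}}{\lambda_1^{d}},\ \frac{L^{2d-1}}{\lambda_1^{d}\lambda_{d+1}^{d-1}}\Bigr\}.
\]
Because $\NormK\up C=\Normk C^{2}$ and $\NormK(\sqrt p\up C)=p^{d}\Normk C^{2}$, the value of $L$ absorbs a factor $p^{1/4}$ precisely when $\il D=\sqrt p\up C$. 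Matching this against the four cases provided by Lemma \ref{appliedminimaestimates}, a routine computation shows that $L^{d}/\lambda_1^{d}$ contributes $\Error_1$ (resp.\ $p^{d/2}\Error_1$ when $p\O_\ks\mid E$), and $L^{2d-1}/(\lambda_1^{d}\lambda_{d+1}^{d-1})$ contributes $\Error_2$ (resp.\ $p^{d-1/2}\Error_2$); in each of the four configurations the worst of the two subcases is dominated by the case-split stated in the lemma.

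For the final emptiness claim, I would compare the ball radius $L\ll p^{1/4}X\Normk C^{1/d}$ for $\Phi S_F(X\NormK\il D^{1/(2d)})$ against the worst lower bound $\lambda_1(\Phi\Lambda)\geq p^{-1/4}\Normk(CE)^{1/d}$ from Lemma \ref{appliedminimaestimates}. When $\Normk E>(\gamma p X)^d$ for a suitably large $\gamma=\gamma(\ks)$, we get $\lambda_1(\Phi\Lambda)>L$; combined with $\vnull\notin S_F$ (see \eqref{0notinSF}), this forces the intersection to be empty.

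The main obstacle is purely bookkeeping: verifying that the single error expression stated in the lemma dominates all four combinations of ($\il D\in\{\up C,\sqrt p\up C\}$) with ($p\O_\ks\mid E$ or not), so that the trick of using $\Phi$ — which gains a factor $p^{1/2}$ in $\lambda_{d+1}$ at the cost of losing one in $\lambda_1$ — really does produce the uniform $p$-dependence claimed. Choosing $a=d+1$ (rather than $a=\Da$ as in Lemma \ref{appliedlatticeestimate}) is what lets this gain propagate through the $\lambda_{d+1}^{d-1}$ factor.
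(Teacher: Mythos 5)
Your proposal follows the paper's proof essentially verbatim: conjugate by $\Phi$ (which preserves determinants and volumes), compute the main term from \eqref{SFVol} and \eqref{homexp}, apply Lemma \ref{Lemmacountinglatticepts} with $a=d+1$ together with Lemma \ref{LipBall} and the four cases of Lemma \ref{appliedminimaestimates}, and verify that each combination of $\il D\in\{\up C,\sqrt p\up C\}$ and $p\O_\ks\mid E$ or not is dominated by the stated error. The only cosmetic difference is in the emptiness claim, where the paper invokes Lemma \ref{appliedlatticeestimate} with \eqref{Cen'here} and \eqref{nbp} directly rather than re-running the $\Phi$-comparison, but the underlying estimate is identical (your version even yields a marginally sharper cutoff, which is harmless).
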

\begin{proof}
First note that
\begin{equation*}
|\Lambda(\il A \il D, CE) \cap S_F(X \NormK\il D^{1/(2d)})|=|\Phi\Lambda(\il A \il D, CE) \cap \Phi S_F(X \NormK\il D^{1/(2d)})|.
\end{equation*}
Now we apply Lemma \ref{Lemmacountinglatticepts} with $a=d+1$ combined with Lemma \ref{LipBall} to conclude
\begin{alignat*}1
|\Phi\Lambda(\il A \il D, CE) \cap \Phi S_F(X \NormK\il D^{1/(2d)})|=
\frac{\Vol S_F(1)\NormK\il DX^{2d}}{\det \Lambda(\il A \il D, CE)}\\
+O\left(\max\left\{\frac{p^{-d/4}X^d \NormK\il D^{1/2}}{\lambda_1(\Phi\Lambda)^d},\frac{p^{-(2d-1)/4}X^{2d-1} \NormK\il D^{1-1/(2d)}}
{\lambda_1(\Phi\Lambda)^d\lambda_{d+1}(\Phi\Lambda)^{d-1}}\right\}\right).
\end{alignat*}
Finally, we use Lemma \ref{appliedminimaestimates} to estimate $\lambda_1(\Phi\Lambda)$ and $\lambda_{d+1}(\Phi\Lambda)$, and the first claim follows from
a simple computation. The second claim follows from Lemma \ref{appliedlatticeestimate} combined with (\ref{Cen'here}) and (\ref{nbp}).
\end{proof}
We are now in the position to prove Proposition \ref{properrorterm}. In the introduction we already computed the main term, see \eqref{example2}. Proceeding exactly as in the proof of Theorem \ref{generalthm} in the case $(n,d)=(1,1)$, we obtain  
\begin{alignat*}1
N_\en(\P^1(k);X)&=\frac{2p^{d/2}}{p^d+1}S_\ks(1)X^{2d}\\
&+O\left(\sum_{C \in \mathcal{R}}\sum_{\il D \in S_C}
\sum_{\il A \in T}\sum_{E \unlhd \O_\ks\atop \Normk E>(\ga p X)^d}\frac{\Vol\Phi  S_F(X \NormK\il D^{1/(2d)})}{\det\Phi \Lambda(\il A \il D, CE)}\right) \\
&+O\left(\sum_{C \in \mathcal{R}}\sum_{\il D \in S_C}
\sum_{\il A \in T}\sum_{E \unlhd \O_\ks\atop \Normk E\leq (\ga p X)^d}\Error_1+\Error_2\right)\\
&+O\left(\sum_{C \in \mathcal{R}}\sum_{\il D \in S_C}
\sum_{\il A \in T}\sum_{\substack{E \unlhd \O_\ks \\ \Normk E\leq (\ga p X)^d\\ p\O_\ks \mid E}}p^{d/2}\Error_1+p^{d-1/2}\Error_2 \right).
\end{alignat*}
For the first error term we apply Minkowski's second theorem and Lemma \ref{LipBall} to get the upper bound
\begin{alignat*}1
\frac{\Vol\Phi  S_F(X \NormK\il D^{1/(2d)})}{\det\Phi \Lambda(\il A \il D, CE)}\ll\frac{L^{2d}}{\lambda_1(\Phi\Lambda)^d\lambda_{d+1}(\Phi\Lambda)^{d}},
\end{alignat*}
where $L\ll p^{-1/4}X\NormK\il D^{1/(2d)}$. Summing the above over the finite sums can be handled by Lemmata \ref{sctwoelements} and  \ref{setcardbounds}. Now for the infinite sum over the ideals $E$, we apply Lemma \ref{appliedminimaestimates}, and a straightforward computation (using the dichotomy $P\mid E$, $P \nmid E$) yields the upper bound
\begin{alignat*}1
\ll \frac{X^{d}}{p^{3d/2}}.
\end{alignat*}
For the second error term we note that 
\begin{alignat*}1
\sum_{E \unlhd \O_\ks\atop \Normk E\leq (\ga p X)^d}\Error_1=\sum_{E \unlhd \O_\ks\atop \Normk E\leq (\ga p X)^d}\frac{X^d}{\Normk E}\ll X^d\log((\ga p X)^d)\ll X^d\log X+X^d\log p,
\end{alignat*}
and
\begin{alignat*}1
\sum_{E \unlhd \O_\ks\atop \Normk E\leq (\ga p X)^d}\Error_2\leq
\frac{X^{2d-1}}{p^{(d-1)/2}}\sum_{E \unlhd \O_\ks}\Normk E^{-2+1/d}\ll\frac{X^{2d-1}}{p^{(d-1)/2}}.
\end{alignat*}
Then we apply Lemmata \ref{sctwoelements} and  \ref{setcardbounds} to conclude
\begin{alignat*}1
\sum_{C \in \mathcal{R}}\sum_{\il D \in S_C}
\sum_{\il A \in T}\sum_{E \unlhd \O_\ks\atop \Normk E\leq (\ga p X)^d}\Error_1+\Error_2 \ll X^d\log X+X^d\log p+\frac{X^{2d-1}}{p^{(d-1)/2}}.
\end{alignat*}
Similar straightforward calculations yield
\begin{alignat*}1
\sum_{\substack{E \unlhd \O_\ks \\ \Normk E\leq (\ga p X)^d\\ p\O_\ks \mid E}}p^{d/2}\Error_1\ll \frac{X^{d}}{p^{d/2}}\log X,
\end{alignat*}
and 
\begin{alignat*}1
\sum_{\substack{E \unlhd \O_\ks \\ \Normk E\leq (\ga p X)^d\\ p\O_\ks \mid E}}p^{d-1/2}\Error_2\ll \frac{X^{2d-1}}{p^{3d/2-1}}.
\end{alignat*}
Thus, applying again Lemmata \ref{sctwoelements} and  \ref{setcardbounds}, we see that 
\begin{alignat*}1
\sum_{C \in \mathcal{R}}\sum_{\il D \in S_C}
\sum_{\il A \in T}\sum_{\substack{E \unlhd \O_\ks \\ \Normk E\leq (\ga p X)^d\\ p\O_\ks \mid E}}p^{d/2}\Error_1+p^{d-1/2}\Error_2
\ll X^d\log X+\frac{X^{2d-1}}{p^{(d-1)/2}}.
\end{alignat*}
Combining these estimates and Lemma \ref{onetoone} completes the proof of Proposition \ref{properrorterm}.

We can now sum $N(\sqrt{p}\ks^*,X)$ over all $p\in \mathbf{P}_\ks$. The next lemma tells us that we can restrict the summation to $p\leq X^2$.
\begin{lemma}\label{pX2}
For any $\alpha\in \ks^*$ and any $p\in \mathbf{P}_\ks$ we have $H(\sqrt{p}\alpha)\geq \sqrt{p}$. 
\end{lemma}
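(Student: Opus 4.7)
The plan is to extract the desired lower bound from the local contribution at the unique place of $\KL:=\ks(\sqrt{p})$ lying above $p$. Since $p\in\mathbf{P}_\ks$ is inert in $\ks$, the completion $\ks_{p\O_\ks}$ is an unramified extension of $\Q_p$ of degree $d$, with $p$ itself as a uniformizer. Because $v_{p\O_\ks}(p)=1$ is odd, $p$ is not a square in $\ks_{p\O_\ks}^*$, so $x^2-p$ is irreducible there; the extension $\KL_\mathfrak{P}/\ks_{p\O_\ks}$ is totally ramified of degree $2$ with uniformizer $\sqrt{p}$, and there is a unique prime $\mathfrak{P}$ of $\O_\KL$ above $p$, of local degree $d_\mathfrak{P}=2d$. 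From $|\sqrt{p}|_\mathfrak{P}^2=|p|_\mathfrak{P}=p^{-1}$ I read off $|\sqrt{p}|_\mathfrak{P}=p^{-1/2}$.

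Next, for any $\alpha\in\ks^*$,
\[
|\sqrt{p}\alpha|_\mathfrak{P}=p^{-(1/2+v_{p\O_\ks}(\alpha))},
\]
and since $v_{p\O_\ks}(\alpha)\in\Z$, the exponent $1/2+v_{p\O_\ks}(\alpha)$ is a half-integer of absolute value at least $1/2$. Consequently
\[
\max\bigl\{|\sqrt{p}\alpha|_\mathfrak{P},\,|\sqrt{p}\alpha|_\mathfrak{P}^{-1}\bigr\}\geq p^{1/2}.
\]

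To finish, I would invoke the identity $H(\beta)=H(\beta^{-1})$ (an immediate consequence of the product formula): singling out the $\mathfrak{P}$-factor in the defining product $H(\beta)^{2d}=\prod_w\max\{1,|\beta|_w\}^{d_w}$ and applying this inequality both to $\beta=\sqrt{p}\alpha$ and to its inverse yields
\[
H(\sqrt{p}\alpha)^{2d}\geq \max\bigl\{1,|\sqrt{p}\alpha|_\mathfrak{P},|\sqrt{p}\alpha|_\mathfrak{P}^{-1}\bigr\}^{d_\mathfrak{P}}\geq (p^{1/2})^{2d}=p^d,
\]
so $H(\sqrt{p}\alpha)\geq\sqrt{p}$, as required.

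There is no real obstacle beyond verifying the total ramification of $p$ in $\KL/\ks$; the essence of the argument is the parity trick, which forces $|\sqrt{p}\alpha|_\mathfrak{P}$ to stay bounded away from $1$ by a factor of at least $p^{1/2}$, no matter what $\alpha$ is chosen.
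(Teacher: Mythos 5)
Your proof is correct and is essentially the paper's own argument, just carried out in terms of absolute values rather than valuations: you both isolate the unique prime $\mathfrak{P}=\sqrt{p}\,\O_\KL$ above $p$, observe that the parity of $v_\mathfrak{P}(\sqrt{p}\alpha)$ (equivalently, the half-integer exponent in $|\sqrt{p}\alpha|_\mathfrak{P}$) forces the $\mathfrak{P}$-valuation to be nonzero, and then combine the local lower bound with $H(\beta)=H(\beta^{-1})$.
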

\begin{proof}
Let $x\in \KL$ and let $\mathfrak{P}$ be the prime ideal $\sqrt{p}\O_\KL$. Then 
\begin{alignat*}1
H(x)\geq \max\{1,\NormK \mathfrak{P}\}^{-v_\mathfrak{P}(x\O_\KL)/(2d)}=\max\{1,p^d\}^{-v_\mathfrak{P}(x\O_\KL)/(2d)}.
\end{alignat*}
In particular, if $v_\mathfrak{P}(x\O_\KL)<0$ we get $H(x)\geq \sqrt{p}$. 
As $H(x)=H(1/x)$ for any nonzero $x$ whatsoever, it suffices to show that the order of $\sqrt{p}\alpha\O_\KL$ at $\mathfrak{P}$ is nonzero.
As $p$ is inert in $\ks$ the order of $\alpha \O_\KL$ at $\mathfrak{P}$ is even. Hence 
the order of $\sqrt{p}\alpha\O_\KL$ at $\mathfrak{P}$ is odd. 
\end{proof}
We can now prove Theorem \ref{Thmvartheta}. 
Clearly, we have 
\begin{alignat*}1
N(\sqrt{\mathbf{P}_\ks} \ks,X)&=1+\sum_{p\in \mathbf{P}_\ks\atop p\leq X^2}N(\sqrt{p}\ks^*,X)\\
=&\sum_{p\in \mathbf{P}_\ks\atop p\leq X^2}\frac{2p^{d/2}}{p^d+1}S_\ks(1)X^{2d}+O\left(\frac{X^{2d-1}}{p^{(d-1)/2}}+X^d\log X+X^d\log p\right) \\
=&\sum_{p\in \mathbf{P}_\ks \atop p\leq X^2}\frac{2p^{d/2}}{p^d+1}S_\ks(1)X^{2d}
+O\left(\sum_{p\in \mathbf{P}_\ks\atop p\leq X^2}\frac{X^{2d-1}}{p^{(d-1)/2}}\right)+O\left(\sum_{p\in \mathbf{P}_\ks\atop p\leq X^2}X^d\log X\right). 
\end{alignat*}
By the prime number theorem we have
\begin{alignat*}1
\sum_{p\in \mathbf{P}_\ks\atop p\leq X^2}X^d\log X\ll X^{d+2}.
\end{alignat*}
A straightforward calculation yields
\begin{alignat*}1
\sum_{p\in \mathbf{P}_\ks\atop p\leq X^2}\frac{X^{2d-1}}{p^{(d-1)/2}}\ll \begin{cases}
                                                                          X^{2d-1} &\text{ if }d\geq 4,\\
	                                                                  X^{5}\log\log X &\text{ if }d=3,\\
									  X^{4} &\text{ if }d=2.
                                                                         \end{cases}
\end{alignat*}
To handle the first term let us start with the simpler case $d\geq 3$. Then we have
\begin{alignat*}1
\sum_{p\in \mathbf{P}_\ks\atop p\leq X^2}\frac{2p^{d/2}}{p^d+1}S_\ks(1)X^{2d}
&=\sum_{p\in \mathbf{P}_\ks}\frac{2p^{d/2}}{p^d+1}S_\ks(1)X^{2d}+O\left(\sum_{p\in \mathbf{P}_\ks\atop p>X^2}\frac{2p^{d/2}}{p^d+1}S_\ks(1)X^{2d}\right)\\
&=\sum_{p\in \mathbf{P}_\ks}\frac{2p^{d/2}}{p^d+1}S_\ks(1)X^{2d}+O(X^{2d-1}).
\end{alignat*}
This finishes the proof of Theorem \ref{Thmvartheta} for $d\geq 3$.

Let us now assume $d=2$. It remains to show that 
\begin{alignat*}1
\sum_{p\in \mathbf{P}_\ks\atop p\leq X^2}\frac{2p}{p^2+1}=\log\log X +O(1).
\end{alignat*}
Clearly, we have 
\begin{alignat*}1
\sum_{p\in \mathbf{P}_\ks\atop p\leq X^2}\frac{2p}{p^2+1}=
\sum_{p\in \mathbf{P}_\ks\atop p\leq X^2}\frac{2}{p}+O(1).
\end{alignat*}
By an explicit version of Chebotarev's density theorem (see, e.g., \cite{LagariasOdlyzko}) we know that for $T\geq 3$ (using $\Li(T) = T/\log{T} + O(T/(\log T)^2)$)
\begin{alignat*}1
\sum_{p\in \mathbf{P}_\ks\atop p\leq T}1=\frac{T}{2\log T}+ O\left(\frac{T}{(\log T)^2}\right).
\end{alignat*}
Applying partial summation we get
\begin{alignat*}1
\sum_{p\in \mathbf{P}_\ks\atop p\leq X^2}\frac{2}{p}=\sum_{m=2}^{X^2}\frac{1}{(m+1)\log m}+O(1)=\log\log X +O(1).
\end{alignat*}
This completes the proof  of Theorem \ref{Thmvartheta} for $d=2$.

\section*{Appendix}
\setcounter{alemma}{0}
    \renewcommand{\thealemma}{A\arabic{alemma}}
We will now apply Theorem \ref{generalthm} to deduce the formula (\ref{lineqexample}).
We start by proving our claim that $\en'$ is an adelic Lipschitz system whenever all the functions $N_w$ of $\en$ are norms. 
To this end we shall use the following simple observations.\\

Let  $f_1,f_2,f: \IR^q\rightarrow \IR$ and $F:[0,1]^{q-1}\rightarrow \IR^q$ be functions that satisfy a Lipschitz condition with
Lipschitz constant $L_{f_1}, L_{f_2}, L_{f}$ and $L_{F}$ respectively. Then we have:
\begin{enumerate}[1.]
\item $|f(F(\vt))-f(F(\vt'))|\leq L_fL_F|\vt-\vt'|$ for all $\vt, \vt'\in [0,1]^{q-1}$.
\item Suppose that $f(F(\vt))\geq c>0$ for all $\vt \in
  [0,1]^{q-1}$ and let $\alpha\leq 1$. Then
  $|f(F(\vt))^\alpha-f(F(\vt'))^\alpha|\leq |\alpha|c^{\alpha-1}L_fL_F |\vt-\vt'|$ for all $\vt, \vt' \in [0,1]^{q-1}$. (We use the convention that $0^0=1$.)
\item Suppose that $|f_1(F(\vt))|,|f_2(F(\vt))|,|f(F(\vt))|,|F(\vt)|\leq C$ for all $\vt \in [0,1]^{q-1}$. Then, for all $\vt, \vt' \in [0,1]^{q-1}$,
  \begin{enumerate}[(a)]
    \item $|f_1(F(\vt))f_2(F(\vt))-f_1(F(\vt'))f_2(F(\vt'))|\leq C(L_{f_1}+L_{f_2})L_F|\vt-\vt'|$,
    \item $|f(F(\vt))F(\vt)-f(F(\vt'))F(\vt')|\leq CL_F(L_f+1)|\vt-\vt'|$.
  \end{enumerate}
\end{enumerate}

Here 1.\ is obvious, 2.\ follows from the mean value theorem and 1., and  3.\ (a) and (b) are consequences of the identity $fg-f'g'=(f-f')g+f'(g-g')$ and 1.\ (note that the assumption $|F(\vt)|\leq C$ is needed only for (b)).

\begin{alemma}\label{en-norms}
Let $\en$ be an adelic Lipschitz system (of dimension $n$) on $\KL$ and assume that for every Archimedean place $w$ of $\KL$ the function $N_w$ satisfies a Lipschitz condition. 
Then $\en'=\en'(\en,\ks)$ is an adelic Lipschitz system (of dimension $n$) on $\ks$.  
\end{alemma}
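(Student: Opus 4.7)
The four conditions in Definition \ref{defALS} split into a trivial part and a hard part. At non-Archimedean places of $\ks$, $N_v$ is by construction the maximum norm, which satisfies $(i),(ii),(iv)$. At an Archimedean $v$, $(iv)$ is vacuous, and $(i),(ii)$ are inherited from the $N_w$ with $w\mid v$, since the exponents $\alpha_w:=d_w/(d_v[\KL:\ks])$ sum to $1$. The whole task thus reduces to verifying the Lipschitz condition $(iii)$ at each Archimedean place $v$ of $\ks$: that the level set $\{\vz\in\ks_v^{n+1}:N_v(\vz)=1\}$ lies in $\Lip(d_v(n+1),M_v,L_v)$ for some $M_v$, $L_v$.

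Fix such a $v$ and set $m:=d_v(n+1)$, identifying $\ks_v^{n+1}\cong\R^m$. For each $w\mid v$ the embedding $\ks_v\hookrightarrow\KL_w$ is an isometry of real vector spaces, so $|z|_w=|z|_v$ on $\ks_v$, and the restriction of $N_w$ to $\ks_v^{n+1}$ is Lipschitz with the same constant. The crucial preparatory step is a lower bound for $N_v$ on the Euclidean unit sphere $S\subset\R^m$: for $\vz\in S$ one has $\max_i|z_i|_v\ge 1/\sqrt{n+1}$, so (\ref{normequivalence}) gives $c_w/\sqrt{n+1}\le N_w(\vz)\le c_w^{-1}$ on $S$. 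Hence each $N_w$ is bounded above and below by positive constants on $S$; observation~2 then shows $N_w^{\alpha_w}$ is Lipschitz on $S$ (despite $\alpha_w<1$), and iterated use of observation~3(a) shows $N_v=\prod_{w\mid v}N_w^{\alpha_w}$ is Lipschitz on $S$ with some constant $L_1$ and bounded below on $S$ by some $c>0$, whence $1/N_v$ is Lipschitz on $S$ with constant $L_1/c^2$.

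Next I invoke the standard finite covering of $S$ by $M_0=M_0(m)$ Lipschitz charts $F_j:[0,1]^{m-1}\to S$ of common Lipschitz constant $L_0=L_0(m)$, and radially sweep via
$$\Phi_j(\vt):=\frac{F_j(\vt)}{N_v(F_j(\vt))}.$$
Homogeneity $(ii)$ gives $N_v(\Phi_j(\vt))=1$, so the images of $\Phi_1,\ldots,\Phi_{M_0}$ cover $\{N_v=1\}$. Observation~1 shows $\vt\mapsto 1/N_v(F_j(\vt))$ is Lipschitz on $[0,1]^{m-1}$, and since $|F_j|\equiv 1$, observation~3(b) shows $\Phi_j$ itself is Lipschitz on $[0,1]^{m-1}$. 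This produces the required $\{N_v=1\}\in\Lip(m,M_0,L_v)$ with $L_v$ explicit in terms of $L_0$, $L_1$, $c$.

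The main obstacle throughout is the presence of the fractional exponents $\alpha_w<1$: the map $x\mapsto x^{\alpha_w}$ is not Lipschitz near $0$, so one cannot simply propagate Lipschitz constants from the $N_w$ to $N_v$ on all of $\ks_v^{n+1}$. The entire argument is engineered around the lower bound $N_w\ge c_w/\sqrt{n+1}$ on $S$, which confines the problem to a region where the fractional powers behave smoothly; once this is in hand, the rest is a mechanical assembly from the three elementary observations recorded at the start of the appendix.
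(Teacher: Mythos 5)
Your proof is correct and follows essentially the same route as the paper: parameterize the sphere by Lipschitz maps, exploit a positive lower bound for the $N_w$ there to make the fractional powers Lipschitz via observation~2, assemble $N_v$ (or its reciprocal) via observation~3(a), and radially normalize via observation~3(b). The only cosmetic differences are that the paper uses a single polar-coordinate chart rather than a finite cover, obtains the lower bound for $N_w$ on the sphere from compactness and continuity rather than the explicit estimate $c_w/\sqrt{n+1}$ from \eqref{normequivalence}, and applies observation~2 directly with the negative exponent $-\alpha_w$ instead of first treating $N_v$ and then taking reciprocals.
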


\begin{proof}
  The conditions $(i), (ii)$ and $(iv)$ in Definition \ref{defALS} are
  obviously satisfied. It remains to prove $(iii)$. Given an
  Archimedean place $v$ of $\ks$, let
  $\rho:[0,1]^{d_v(n+1)-1}\rightarrow \IS^{d_v(n+1)-1}$ be the
  (normalized) standard parameterization via polar coordinates of the
  ${(d_v(n+1)-1)}$-dimensional unit sphere in $\ks_v^{n+1}$. Then $\rho$
  is Lipschitz. The subset of $\ks_v^{n+1}$ where $N_v(\vz)=1$ is
  parameterized by the function $\s : [0,1]^{d_v(n+1)-1} \to
  \ks_v^{n+1}$, defined by $\s(\vt) := 1/N_v(\rho(\vt))\cdot
  \rho(\vt)$. Let us show that $\s$ satisfies a Lipschitz condition. 
  
  For any Archimedean place $w$ of $\KL$ extending $v$, the function
  $N_w$ is continuous and nonzero on the compact set
  $\IS^{d_v(n+1)-1}$, whence $1 \ll_\en N_w(\rho(\vt)) \ll_\en 1$ on
  $[0,1]^{d_v(n+1)-1}$. Thus, $N_{w}(\rho(\vt))^{-\frac{d_{w}}{d_v[\KL
      : \ks]}}$ is bounded, and by 2.\ satisfies a Lipschitz
  condition.  Hence, by 3.\ (a) also $N_{v}(\rho(\vt))^{-1}$ is Lipschitz. By 3.\
  (b), we conclude that $\s$ satisfies a Lipschitz condition.
\end{proof}

Note that any norm $\|\cdot\|$ on $\IR^q$ satisfies a Lipschitz condition. This follows from the reverse triangle 
inequality $|\|x\|-\|y\||\leq \|x-y\|$ and the equivalence of all norms on $\IR^q$. Thus, 
if all the functions $N_w$ are norms then Lemma \ref{en-norms} applies and so $\en'=\en'(\en,\ks)$ is an adelic Lipschitz system (of dimension $n$) on $\ks$.
More generally, let $B_w := \{\vz \in \KL_w^{n+1} : N_w(\vz) \leq 1\}$ be the
compact star-shaped body corresponding to $N_w$.  Let $\ker(B_w)$ be the convex
kernel of $B_w$, that is the set of all $\vz \in B_w$ such that for all
$\vz' \in B_w$ the line segment $[\vz,\vz']$ is contained in $B_w$. Then $\vNull
\in \ker(B_w)$ and $B_w$ is convex if and only if $\ker(B_w) = B_w$. Moreover, 
\cite[Lemma 1]{Beer} tells
us that $N_w$ is Lipschitz whenever $\vNull$ is in the interior of
$\ker(B_w)$.

Let us now show how the formula (\ref{lineqexample}) follows from Theorem \ref{generalthm}.
We use the adelic Lipschitz system $\en$ (of dimension $2$) on $\KL := \Q(\sqrt{2}, \sqrt{3}, \sqrt{5})$ defined by
\begin{align*}
N_w(z_0, z_1, z_2) :&= \max\{|z_0|_w, |z_1|_w, |z_2|_w, |\frac{\sqrt{2}z_1 + \sqrt{3}z_2}{\sqrt{5}}|_w\}\text,
\end{align*}
for any place $w$ of $\KL$. 
Hence all the $N_w$ are norms so that, thanks to Lemma \ref{en-norms}, we can apply Theorem \ref{generalthm}.
With the notation from Section \ref{section6}, we have 
$\lineqcountingfunction(X) = N_\en(\P^2(\Q),X)+O(X^2)$, as already mentioned in the introduction. Here the error term accounts for the
projective points of the form $(0:\balpha_1:\balpha_2)$.
With Theorem \ref{generalthm}, the only remaining task is to calculate $g_\Q^\en$.

\begin{alemma}
We have
\[g_\Q^\en = \frac{1}{31 \zeta(3)}(1 + 2\cdot 5^{1/4} + 4\cdot 5^{-1/2})\text.\]
\end{alemma}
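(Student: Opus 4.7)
The plan is to specialize the formula \eqref{general_formula} for $g_\ks^\en$ to $\ks=\Q$, $n=2$ and $\KL=\Q(\sqrt2,\sqrt3,\sqrt5)$, making explicit choices for $\mathcal{R}$, $S_C$ and $T$. Since $\Q$ has class number one the outer sum collapses with $\mathcal{R}=\{\Z\}$ and $C=\Z$. It remains to identify $S_\Z$ and $T$, to compute $\det\Lambda(\il A\il D,m\Z)$ for each $(\il D,\il A)$-pair, and to evaluate the resulting Möbius sum over $E=m\Z$.

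First I will analyze $\iN(\bbalf) = \alpha_0\O_\KL + \alpha_1\O_\KL + \alpha_2\O_\KL + \beta\O_\KL$, where $\beta:=(\sqrt2\,\alpha_1+\sqrt3\,\alpha_2)/\sqrt5$, for $\bbalf\in\Q^3$ with $\O_\Q(\bbalf)=\Z$. At any prime $\mathfrak Q$ of $\O_\KL$ not lying over $5$, the elements $\sqrt2,\sqrt3,1/\sqrt5$ are $\mathfrak Q$-integral, hence $v_{\mathfrak Q}(\beta)\ge\min(v_{\mathfrak Q}(\alpha_1),v_{\mathfrak Q}(\alpha_2))$, and $\beta$ adds nothing to $\iN(\bbalf)$ at $\mathfrak Q$. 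So only the behaviour at primes over $5$ matters. Using the $(\Z/2)^3$-Galois group of $\KL/\Q$, one checks that $5\O_\KL=\mathfrak P_1^2\mathfrak P_2^2$ with $e=f=2$, $\NormK\mathfrak P_i=25$ and $\sqrt5\,\O_\KL=\mathfrak P_1\mathfrak P_2$. Reducing $\sqrt2,\sqrt3$ modulo $\mathfrak P_i$ inside $\mathbb{F}_{25}$, and using that $\sqrt6\in\mathbb{F}_5=\{\pm 1\}$, a direct valuation calculation shows that $v_{\mathfrak P_i}(\beta)=-1$ exactly when $2\alpha_1+\varepsilon_i\alpha_2\not\equiv 0\pmod 5$, where $\varepsilon_1=+1$ and $\varepsilon_2=-1$ distinguish the two primes via the two residues of $\sqrt6$. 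Enumerating the four possibilities shows that the $5$-part of $\iN(\bbalf)$ lies in $\{\O_\KL,\mathfrak P_1^{-1},\mathfrak P_2^{-1},\sqrt5^{-1}\O_\KL\}$; I take $S_\Z$ to be this four-element set. A parallel divisor scan shows that $T=\{\O_\KL,\mathfrak P_1,\mathfrak P_2,\mathfrak P_1\mathfrak P_2\}$ (the integral divisors of $\sqrt5\,\O_\KL$) is a valid choice.

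Next, for each of the $16$ pairs $(\il D,\il A)$, I will translate the defining conditions of $\Lambda(\il A\il D,m\Z)$ into a divisibility condition on the $\alpha_i$ (either $\alpha_i\in m\Z$ or $\alpha_i\in\lcm(5,m)\Z$) together with at most one linear congruence $2\alpha_1\pm\alpha_2\equiv 0\pmod 5$ arising from the $\beta$-constraint at $\mathfrak P_1$ or $\mathfrak P_2$. With $Z_5:=\sum_{5\nmid m}\mu(m)/m^3 = 125/(124\zeta(3))$, the Möbius sum $\sum_m\mu(m)/\det\Lambda(\il A\il D,m\Z)$ evaluates in each case to one of $0$, $1/(31\zeta(3))$, $6/(31\zeta(3))$, or $1/\zeta(3)$. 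The value $0$ appears precisely when $\mathfrak P_1$ or $\mathfrak P_2$ divides $\il A\il D$ integrally: the lattice is then $(\lcm(5,m)\Z)^3$ and the sum telescopes to zero. This eliminates seven of the sixteen pairs.

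Finally I will assemble the nine surviving contributions, weighted by $\mu_\KL(\il A)\in\{\pm 1\}$ and $\NormK\il D^{3/8}\in\{1,5^{-3/4},5^{-3/2}\}$. The pair $(\O_\KL,\O_\KL)$ contributes $1/(31\zeta(3))$; for each $i\in\{1,2\}$, the two surviving pairs with $\il D=\mathfrak P_i^{-1}$ together contribute $(6-1)5^{-3/4}/(31\zeta(3)) = 5^{1/4}/(31\zeta(3))$; and the four pairs with $\il D=\sqrt5^{-1}\O_\KL$ sum to $5^{-3/2}(31-11)/(31\zeta(3)) = 4\cdot 5^{-1/2}/(31\zeta(3))$. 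Adding these yields the stated value of $g_\Q^\en$. The main technical point throughout is the local analysis at $\mathfrak P_1,\mathfrak P_2$: pinning down $S_\Z$, and verifying that exactly the $(\il D,\il A)$-pairs with $\mathfrak P_i\mid\il A\il D$ contribute zero to the Möbius sum.
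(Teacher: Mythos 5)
Your proposal matches the paper's proof in every essential respect: the same choices $\mathcal{R}=\{\Z\}$, $S_\Z=\{(\sqrt5)^{-1}\il D' : \il D'\mid\sqrt5\,\O_\KL\}$, and $T=\{\il A:\il A\mid\sqrt5\,\O_\KL\}$; the same splitting data $5\O_\KL=\il P_1^2\il P_2^2$ with $e=f=2$; the same translation of $\Lambda(\il A\il D,m\Z)$ into a sublattice of $\Z^3$ cut out by $\lcm(5,m)$-divisibility plus a congruence $\alpha_2\equiv c\alpha_1\ (\mathrm{mod}\ 5)$; and the same Möbius-sum values $0$, $1/(31\zeta(3))$, $6/(31\zeta(3))$, $1/\zeta(3)$, assembled to $(1+2\cdot 5^{1/4}+4\cdot 5^{-1/2})/(31\zeta(3))$. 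The only point of departure is stylistic: you determine the residues $-(\sqrt3)^{-1}\sqrt2\equiv 2,3\ (\mathrm{mod}\ \il P_i)$ by hand via $\sqrt6\equiv\pm1\ (\mathrm{mod}\ 5)$ and the Galois action, whereas the paper delegates this to Sage.
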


\begin{proof}
For some tedious computations in $K$, we use the computer algebra system Sage\footnote{\url{http://www.sagemath.org}}. We use the same notation as in Section \ref{section6}. Clearly, we can choose $\mathcal{R} = \{\Z\}$. For any $\bbalf = (\balpha_0, \balpha_1, \balpha_2) \in \Q^3$, we have
\[\iN(\bbalf) = \balpha_0\O_\KL + \balpha_1\O_\KL + \balpha_2\O_\KL + \frac{\sqrt{2}\balpha_1 + \sqrt{3}\balpha_2}{\sqrt{5}}\O_\KL\text.\]
If $\O_\Q(\bbalf) = \Z$ then $\balpha_0\O_\KL + \balpha_1\O_\KL + \balpha_2\O_\KL = \O_\KL$, so $\iN(\bbalf) \supseteq \O_\KL$. On the other hand, we 
clearly have $\iN(\bbalf) \subseteq (\sqrt{5})^{-1}O_\KL$. Thus, we can choose
\[S_\Z := \{(\sqrt{5})^{-1}\il D: \il D \mid \sqrt{5}\O_\KL\}\text.\]
Moreover, if $\bbalf \in \Lambda_\Z((\sqrt{5})^{-1}\il D \il A)$, for some nonzero ideal $\il A$ of $\O_\KL$, then $\iN(\bbalf) = (\sqrt{5})^{-1}\il D_1$, 
for some nonzero ideal $\il D_1 \mid \sqrt{5}\O_\KL$. In particular, $\il D\il A \mid \il D_1$. This shows that $T_{\Z, (\sqrt{5})^{-1}\il D}$ is contained in the finite set
\[T := \{\il A: \il A \mid\sqrt{5}\O_\KL\}\text.\]
With \eqref{general_formula}, we obtain
\begin{equation}\label{pf_lineq_general_formula}
g_\Q^\en = \sum_{\il D \mid \sqrt{5}\O_\KL}\NormK((\sqrt{5})^{-1}\il D)^{3/8}\sum_{\il A \mid \sqrt{5}\O_\KL}\mu_\KL(\il A)\Sigma(\il A \il D)\text,
\end{equation}
where 
\[\Sigma(\il B) := \sum_{n=1}^{\infty}\frac{\mu(n)}{\det\Lambda((\sqrt{5})^{-1}\il B, n\Z)}\text.\]
Let us evaluate this sum for any ideal $\il B$ of $\O_\KL$ dividing $5 \O_\KL$. Elementary manipulations show that $\Lambda((\sqrt{5})^{-1}\il B, n\Z)$ is the sublattice of $\Z^3$ consisting of all
\begin{equation}\label{pf_lineq_lattice}
\bbalf = (\balpha_0, \balpha_1, \balpha_2) \in (n\Z \cap (\sqrt{5})^{-1} \il B)^3 \text{ such that } \sqrt{2}\balpha_1 + \sqrt{3}\balpha_2 \in \il B\text. 
\end{equation}
We have $5 \O_\KL = \il P_1^2 \il P_2^2$, where
\begin{equation*}
  \il P_1 := (5,
  \sqrt{15}-\sqrt{10}+\sqrt{6}-1)\text{, }\il
  P_2 := (5, \sqrt{15}-\sqrt{10}+\sqrt{6}+1 ) 
\end{equation*}
are distinct prime ideals of $\O_\KL$ with inertia degrees equal to
$2$.

For $\il B = \O_K$, the first condition in \eqref{pf_lineq_lattice} amounts to $\bbalf \in (n\Z)^3$. Then the second condition is always satisfied, and $\det \Lambda((\sqrt{5})^{-1}\O_\KL, n\Z) = n^3$. Therefore,
\begin{equation}\label{pf_lineq_sum_Ok}
\Sigma(\O_\KL) = \sum_{n=1}^{\infty}\frac{\mu(n)}{n^3} = \frac{1}{\zeta(3)}\text.
\end{equation}

If $\il B = \il P_1$, then the first condition in \eqref{pf_lineq_lattice} is equivalent to $\bbalf \in (n\Z)^3$. For the second condition, we find that $-(\sqrt{3})^{-1}\sqrt{2} \equiv 3 \mod \il P_1$,
so this condition is equivalent to $\balpha_2 = 3 \balpha_1 + a$, for an $a \in \il P_1 \cap n\Z = \lcm(5, n)\Z$. Therefore, $\Lambda((\sqrt{5})^{-1}\il P_1, n\Z)$ has the basis 
\[\{(n,0,0), (0,n,3n), (0,0,\lcm(5,n))\}\]
of determinant $n^2 \lcm(5, n)$. A similar computation shows that $-(\sqrt{3})^{-1}\sqrt{2} \equiv 2 \mod \il P_2$, so
\[\{(n,0,0), (0,n,2n), (0,0,\lcm(5,n))\}\]
is a basis of $\Lambda((\sqrt{5})^{-1}\il P_2, n\Z)$ of the same determinant. Thus,
\begin{equation}\label{pf_lineq_sum_Pi}
\Sigma(\il P_i) = \sum_{n=1}^{\infty}\frac{\mu(n)}{n^2\lcm(5, n)} = \frac{1}{\zeta(3)}\frac{5^2-1}{5^3-1}\text.
\end{equation}

For $\il B = \il P_1 \il P_2 = \sqrt{5} \O_K$, the first condition in \eqref{pf_lineq_lattice} is again equivalent to $\bbalf \in (n\Z)^3$. The second condition is equivalent to $\balpha_2 \equiv -(\sqrt{3})^{-1}\sqrt{2} \balpha_1 \mod \il P_1 \il P_2$. By the Chinese remainder theorem and what we have seen before, this is equivalent to
\[\balpha_2 \equiv 2 \balpha_1 \mod 5 \quad\text{ and }\quad \balpha_2 \equiv 3 \balpha_1 \mod 5\text,\]
so $\balpha_1 \equiv \balpha_2 \equiv 0 \mod 5$. Thus, $\Lambda((\sqrt{5})^{-1}\il P_1\il P_2, n\Z) = n\Z \times (\lcm(5, n) \Z)^2$ has determinant $n \lcm(5, n)^2$. We obtain
\begin{equation}\label{pf_lineq_sum_P1P2}
\Sigma(\il P_1 \il P_2) = \sum_{n=1}^{\infty}\frac{\mu(n)}{n\lcm(5, n)^2} = \frac{1}{\zeta(3)}\frac{5-1}{5^3-1}\text.
\end{equation}

In the other cases, that is $\il P_1^2 \mid \il B$ or $\il P_2^2 \mid \il B$, we have $\down((\sqrt{5})^{-1}\il B) = 5\Z$, so the first condition in \eqref{pf_lineq_lattice} is equivalent to $\bbalf \in (\lcm(5, n)\Z)^3$. In this case, the second condition is always satisfied, so we obtain $\det\Lambda((\sqrt{5})^{-1}\il B, n\Z) = \lcm(5, n)^3$ and
\begin{equation}\label{pf_lineq_sum_other_B}
\Sigma(\il B) = \sum_{n=1}^{\infty}\frac{\mu(n)}{\lcm(5, n)^3} = 0\text.
\end{equation}
A simple computation shows that 
\[\NormK((\sqrt{5})^{-1}\O_\KL)^{3/8} = 5^{-3/2}\text{, }\quad \NormK((\sqrt{5})^{-1}\il P_i)^{3/8} = 5^{-3/4}\text{, }\quad \NormK(\O_\KL)^{3/8} = 1\text.\]
To prove the lemma, just substitute this and \eqref{pf_lineq_sum_Ok} -- \eqref{pf_lineq_sum_other_B} in \eqref{pf_lineq_general_formula}.
\end{proof}

\section*{Acknowledgments}
We would like to thank David Masser for having brought our attention
to some of the problems considered here, and Robert Tichy for giving
us the opportunity to start this collaboration during our common time
in Graz. This work was completed while the second author was a
Visiting Fellow of the Center for Advanced Studies at LMU M\"unchen in
August 2012. He would like to thank Ulrich Derenthal for the
invitation, and the CAS for the financial support.  Finally, we are
indebted to the referee for the very careful reading and an excellent
report with many valuable and detailed suggestions.  The referee also
alerted us to an error in the statement of Lemma \ref{en-norms}, which
is now corrected.

\bibliographystyle{alpha}
\bibliography{literature}

\end{document}